\newtheorem{theorem}{Theorem}
\theoremstyle{plain}
\newtheorem{claim}[theorem]{Claim}
\newtheorem{construction}[theorem]{Construction}
\newtheorem{definition}[theorem]{Definition}
\newtheorem{fact}[theorem]{Fact}
\newtheorem{lemma}[theorem]{Lemma}
\newtheorem{problem}[theorem]{Problem}
\newtheorem{proposition}[theorem]{Proposition}
\numberwithin{equation}{section}
\numberwithin{theorem}{section}
\numberwithin{case}{section}
\numberwithin{subcase}{case}
\def\F{\mathcal{F}}
\def \a{\alpha}
\def \e{\epsilon}
\def \r{\gamma}
\def \bfi{\mathbf{i}}
\def \bfu{\mathbf{u}}
\def \bfv{\mathbf{v}}
\def \calP{\mathcal{P}}
\def \PM{\textbf{PM}}
\def \DPM{\textbf{DPM}}
\begin{document}
\title{Decision problem for Perfect Matchings in Dense $k$-uniform Hypergraphs}
\author{Jie Han}
\address{Instituto de Matem\'{a}tica e Estat\'{\i}stica, Universidade de S\~{a}o Paulo, Rua do Mat\~{a}o 1010, 05508-090, S\~{a}o Paulo, Brazil}
\email{jhan@ime.usp.br}
\thanks{The author is supported by FAPESP (2014/18641-5, 2015/07869-8).}

\date{\today}
\subjclass[2010]{Primary 05C70, 05C65} %
\keywords{perfect matching, hypergraph, absorbing method}%

\begin{abstract}
For any $\r>0$, Keevash, Knox and Mycroft \cite{KKM13} constructed a polynomial-time algorithm which determines the existence of perfect matchings in any $n$-vertex $k$-uniform hypergraph whose minimum codegree is at least $n/k+\r n$. 
We prove a structural theorem that enables us to determine the existence of a perfect matching for any $k$-uniform hypergraph with minimum codegree at least $n/k$. This solves a problem of Karpi\'nski, Ruci\'nski and Szyma\'nska completely.
Our proof uses a lattice-based absorbing method.
\end{abstract}

\maketitle

\section{Introduction}

Given $k\ge 2$, a $k$-uniform hypergraph (in short, \emph{$k$-graph}) $H=(V(H), E(H))$ consists of a vertex set $V(H)$ and an edge set $E(H)\subseteq \binom{V(H)}{k}$, where every edge is a $k$-element subset of $V(H)$. A \emph{matching} in $H$ is a collection of vertex-disjoint edges of $H$. A \emph{perfect matching} $M$ in $H$ is a matching that covers all vertices of $H$. Throughout this note, we assume that $k$ divides $|V(H)|$, which is clearly a necessary condition for the existence of a perfect matching in $H$. 

The question of whether a given $k$-graph $H$ contains a perfect matching is one of the most fundamental questions of combinatorics. In the graph case $k=2$, Tutte's Theorem \cite{Tu47} gives necessary and sufficient conditions for $H$ to contain a perfect matching, and Edmonds' Algorithm~\cite{Edmonds} finds such a matching in polynomial time. However, for $k\ge 3$ this problem was one of Karp's celebrated 21 NP-complete problems \cite{Karp}. Since the general problem is intractable provided P $\neq$ NP, it is natural to ask conditions on $H$ which make the problem tractable or even guarantee that a perfect matching exists. One well-studied class of such conditions are minimum degree conditions.

\subsection{Perfect matchings under minimum degree conditions}
Given a $k$-graph $H$ with a set $S$ of $d$ vertices (where $1 \le d \le k-1$) we define $\deg_{H} (S)$ to be the number of edges containing $S$ (the subscript $H$ is omitted if it is clear from the context). The \emph{minimum $d$-degree $\delta _{d} (H)$} of $H$ is the minimum of $\deg_{H} (S)$ over all $d$-vertex sets $S$ in $H$.  
We refer to $\delta_{k-1} (H)$ as the \emph{minimum codegree} of $H$.

Over the last few years there has been a strong focus in establishing minimum $d$-degree thresholds that force a perfect matching in a $k$-graph \cite{AFHRRS, CzKa, HPS, Khan2, Khan1, KO06mat, KOT, MaRu, Pik, RR, RRS06mat, RRS09, TrZh12, TrZh13}. In particular, R\"odl, Ruci\'nski and Szemer\'edi \cite{RRS09} determined the minimum codegree threshold that ensures a perfect matching in a $k$-graph on $n$ vertices for large $n$ and all $k\ge 3$. The threshold is $n/2-k+C$, where $C\in\{3/2, 2, 5/2, 3\}$ depends on the values of $n$ and $k$. 
In contrast, they proved that a $k$-graph $H$ on $n$ vertices satisfying $\delta_{k-1}(H)\ge n/k+O(\log n)$ contains a matching of size $n/k-1$ (one edge away from a perfect matching). Recently the author \cite{Han14_mat} improved this result by showing that $\delta_{k-1}(H)\ge n/k-1$ suffices. The following construction, usually called \emph{space barrier}, shows that this is best possible.

\begin{construction}[Space Barrier]\label{con:sb}
Let $V$ be a set of size $n$ and fix $S\subseteq V$ with $|S|<n/k$. Let $H$ be the $k$-graph whose edges are all $k$-sets that intersect $S$.
\end{construction}

Note that the minimum codegree of $H$ is $|S|$ and any matching in Construction \ref{con:sb} has at most $|S|$ edges.

\medskip
Let $\PM(k,\delta)$ be the decision problem of determining whether a $k$-graph $H$ with $\delta_{k-1}(H)\ge \delta n$ contains a perfect matching. Given the result of \cite{RRS09}, a natural question to ask is the following: For which values of $\delta$ can $\PM(k,\delta)$ be decided in polynomial time? This holds for $\PM(k,1/2)$ by the main result of \cite{RRS09}. On the other hand, $\PM(k,0)$ includes no degree restriction on $H$ at all, so is NP-complete by the result of Karp \cite{Karp}. Szyma\'nska \cite{Szy13} proved that for $\delta<1/k$ the problem $\PM(k,\delta)$ admits a polynomial-time reduction to $\PM(k,0)$ and hence $\PM(k,\delta)$ is also NP-complete. Karpi\'nski, Ruci\'nski and Szyma\'nska  showed that there exists $\e>0$ such that $\PM(k,1/2-\e)$ is in P and asked the complexity of $\PM(k,\delta)$ for $\delta\in [1/k, 1/2)$. 

\begin{problem}\cite{KRS10}\label{prob}
What is the computational complexity of $\PM(k,\delta)$ for $\delta\in [1/k, 1/2)$?
\end{problem}

Recently, Keevash, Knox and Mycroft \cite{KKM13} gave a long and involved proof that shows $\PM(k,\delta)$ is in P for any $\delta>1/k$ that leaves only $\PM(k,1/k)$ unknown. Moreover, they also constructed a polynomial-time algorithm to find a perfect matching provided one exists.
They \cite{KKM_abs} also expected that it would be difficult to solve the decision problem for $\delta=1/k$, as $n/k$ is the minimum codegree threshold at which a perfect fractional matching is guaranteed, so there is a clear behavioral change at this point.
In this paper, we give a short proof that shows $\PM(k, \delta)$ is in P for all $\delta\ge 1/k$ and thus solve Problem \ref{prob} completely.

\begin{theorem}\label{thm:main}
Fix $k\ge 3$. Let $H$ be an $n$-vertex $k$-graph with $\delta_{k-1}(H)\ge n/k$. Then there is an algorithm with running time $O(n^{3k^2 - 5k})$, which determines whether $H$ contains a perfect matching.
\end{theorem}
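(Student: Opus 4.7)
My plan is to use a lattice-based absorbing method combined with a structural dichotomy. The near-perfect matching result of \cite{Han14_mat} already guarantees a matching of size $n/k-1$ in any $H$ with $\delta_{k-1}(H)\ge n/k$, so the question is whether such an ``almost-perfect'' matching can be completed when $k\mid n$. I would attack this by showing that either $H$ is ``well-connected'' under a suitable reachability relation, in which case a standard absorbing argument produces a perfect matching; or $H$ has a rigid structure close to the space barrier of Construction~\ref{con:sb}, in which case the decision problem reduces to a bounded-dimensional integer feasibility question that is solvable in polynomial time.

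More concretely, for a suitable constant $t=t(k)$, I would say that two vertices $u,v\in V(H)$ are \emph{reachable} if there exist $\Omega(n^{t})$ sets $T$ of size $t$ such that both $H[T\cup\{u\}]$ and $H[T\cup\{v\}]$ contain perfect matchings. Reachability is an equivalence relation whose classes $V_1,\dots,V_r$ can be computed in polynomial time by enumerating $t$-sets. The \emph{edge lattice} $L(H)\subseteq\mathbb{Z}^{r}$ is generated by the index vectors $(|e\cap V_1|,\dots,|e\cap V_r|)$ as $e$ ranges over edges of $H$. The key structural claim I aim to establish is: either (a) $r=1$ and the reachability classes cover $V(H)$ robustly, in which case $H$ has a perfect matching whenever $k\mid n$; or (b) $r\ge 2$, in which case $H$ resembles the space barrier closely enough that a perfect matching in $H$ exists if and only if the vector $(|V_1|,\dots,|V_r|)$ can be written as a non-negative integer combination of $n/k$ edge index vectors. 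Since $r$ is bounded by a constant depending only on $k$, case (b) is decidable in polynomial time by enumerating possible combinations of index vectors.

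The algorithm itself proceeds as follows: (i) compute the reachability classes; (ii) in case (a) construct an absorber by the standard random/probabilistic argument and combine it with \cite{Han14_mat} to produce a perfect matching; (iii) in case (b) solve the bounded-dimensional integer programming problem over $L(H)$ and, if feasible, greedily assemble an appropriate matching edge by edge. The total running time is dominated by the reachability computation plus the enumeration of candidate structures, which can be kept polynomial in $n$ (with exponent depending on $k$).

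The main obstacle is proving the structural theorem in case~(b). The codegree $n/k$ is exactly the fractional perfect matching threshold, which is the delicate boundary highlighted in \cite{KKM_abs}: just below, the problem is NP-hard \cite{Szy13}, and just above, prior work \cite{KKM13} handles it with margin to spare. At the exact threshold there is no slack, so one must show that any obstruction to single-class reachability is forced into a space-barrier-like configuration. Pinning down the precise interaction between reachability classes, the edge lattice, and the codegree condition — and in particular bounding $r$ by a constant and showing that the admissible matchings are essentially controlled by $L(H)$ — is the crux of the argument and will absorb the bulk of the technical work.
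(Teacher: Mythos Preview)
Your proposal has a genuine structural gap: the dichotomy you set up conflates two very different obstructions. Having $r\ge 2$ reachability classes does \emph{not} mean $H$ is close to the space barrier. The divisibility barriers (Constructions~\ref{con12} and~\ref{con13}) have codegree near $n/2$, are far from the space barrier, and yet admit two reachability classes. So case~(b) as you describe it is simply false: when $r\ge 2$, $H$ need not resemble the space barrier at all. The paper's split is orthogonal to yours: it distinguishes an \emph{extremal} case (a large independent set of size $\approx \tfrac{k-1}{k}n$, i.e.\ genuine proximity to the space barrier) from a \emph{non-extremal} case. In the non-extremal case $r$ may well exceed $1$, and the absorbing method still works --- but only via the lattice framework, absorbing $k$-sets whose index vectors lie in the \emph{robust} edge-lattice $L_{\calP}^{\mu}(H)$, not the full edge-lattice. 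The extremal case, by contrast, is handled by a direct hands-on argument, and the relevant obstruction there is membership in the family $\mathcal H_{n,k}$, not an integer-feasibility test.

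Your feasibility criterion in case~(b) is also the wrong one. Construction~\ref{con:333} shows that even when $\bfi_{\calP}(V)$ lies in the lattice generated by all edge index vectors, there may be no perfect matching, because some index vectors are realised by very few edges (all sharing a vertex). This is precisely why the paper works with the robust lattice $L_{\calP}^{\mu}(H)$ and with \emph{solubility} (existence of a small matching $M$ with $\bfi_{\calP}(V\setminus V(M))\in L_{\calP}^{\mu}(H)$) rather than non-negative integer feasibility. On the other side, insisting on non-negative combinations is too restrictive: the absorbing argument allows one to release stored edges, effectively using negative coefficients. Finally, reachability as you define it is not automatically an equivalence relation; the paper builds the partition $\calP_0$ carefully (Lemma~\ref{lem:P}) to ensure each part is $(\beta,2^{k-1})$-closed. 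Without these ingredients --- robust lattice, solubility, the extremal/non-extremal split, and the separate test $H\notin\mathcal H_{n,k}$ --- the structural theorem you are aiming for does not hold.
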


The proof of Theorem \ref{thm:main} follows the approach of \cite{KKM13}, from which we use several definitions and results. The heart of the algorithm in that paper was a structural theorem \cite[Theorem 1.10]{KKM13}, which was proved by partitioning the $k$-graph $H$ into a number of $k$-partite $k$-graphs, before finding a perfect matching in each of these $k$-partite $k$-graphs by using a theorem of Keevash and Mycroft \cite{KM1}. Our main improvement is to replace this by a new structural theorem (Theorem \ref{thm:PM}) which significantly simplifies the argument in \cite{KKM13}, and which applies in the exact case $\delta_{k-1}(H)\ge n/k$ (the structural theorem of \cite{KKM13} only applied for $\delta_{k-1}(H)\ge n/k + o(n)$). This already provides a polynomial-time algorithm deciding the existence of perfect matchings, and a faster algorithm as claimed in Theorem \ref{thm:main} is obtained by combining Theorem \ref{thm:PM} with ideas from \cite{KKM13}. Our proof of Theorem \ref{thm:PM} uses a lattice-based absorbing method which does not need the hypergraph regularity lemma or the main result of \cite{KM1}. This novel approach, which combines the powerful absorbing technique with the `divisibility barrier' structures considered in \cite{KM1}, may well be useful for other matching and tiling problems in hypergraphs.

\subsection{Lattice-based constructions}

It is shown in \cite{KM1} that a $k$-graph $H$ has a perfect matching or is close to a family of lattice-based constructions termed ``divisibility barriers''. The following examples of divisibility barriers were given in \cite{RRS09}.

\begin{construction}\label{con12}
Let $X$ and $Y$ be disjoint sets such that $|X\cup Y|=n$ and $|X|$ is odd, and let $H$ be the $k$-graph on $X\cup Y$ whose edges are all $k$-sets which intersect $X$ in an even number of vertices.
\end{construction}

\begin{construction}\label{con13}
Let $X$ and $Y$ be disjoint sets such that $|X\cup Y|=n$ and $|X|-n/k$ is odd, and let $H$ be the $k$-graph on $X\cup Y$ whose edges are all $k$-sets which intersect $X$ in an odd number of vertices.
\end{construction}

To see why there is no perfect matching in Construction \ref{con13}, note that a perfect matching has $n/k$ edges, intersecting $X$ in $n/k$ (mod 2) number of vertices. Since $|X|\not\equiv n/k$ (mod 2), a perfect matching does not exist.
To describe divisibility barriers in general, we make the following definition.
In this paper, every partition has an implicit order on its parts. 

\begin{definition}
Let $H=(V, E)$ be a $k$-graph and let $\calP$ be a partition of $V$ into $d$ parts. Then the \emph{index vector} $\bfi_{\calP}(S)\in \mathbb{Z}^d$ of a subset $S\subseteq V$ with respect to $\calP$ is the vector whose coordinates are the sizes of the intersections of $S$ with each part of $\calP$, namely, $\bfi_{\calP}(S)_X=|S\cap X|$ for $X\in \calP$. Furthermore,
\begin{enumerate}[\emph{(}i\emph{)}]
\item $I_\calP(H)$ denotes the set of index vectors $\bfi_\calP (e)$ of edges $e\in H$, and
\item $L_{\calP}(H)$ denotes the lattice (i.e. additive subgroup) in $\mathbb{Z}^d$ generated by $I_\calP(H)$.
\end{enumerate}
\end{definition}

A \emph{divisibility barrier} is a $k$-graph $H$ which admits a partition $\calP$ of its vertex set $V$ such that $\bfi_{\calP}(V)\notin L_{\calP}(H)$; To see that such an $H$ contains no perfect matching, let $M$ be a matching in $H$. Then $\bfi_\calP(V(M))=\sum_{e\in M}\bfi_\calP(e)\in L_\calP(H)$. But $\bfi_\calP(V)\notin L_\calP(H)$, so $V(M)\neq V$, namely, $M$ is not perfect.
For example, to see that this generates Construction \ref{con12}, let $\calP$ be the partition into parts $X$ and $Y$; then $L_{\calP}(H)$ is the lattice of vectors $(x,y)$ in $\mathbb{Z}^2$ for which $x$ is even and $k$ divides $x+y$, and $|X|$ being odd implies that $\bfi_\calP (V)\notin L_\calP (H)$.

\section{The Main structural theorem}

We need the following definitions from \cite{KKM13} before giving the statement of our structural theorem. 

\begin{definition}\cite{KKM13}
Suppose $L$ is a lattice in $\mathbb{Z}^d$.
\begin{enumerate}[\emph{(}i\emph{)}]
\item We say that $\bfi\in \mathbb{Z}^d$ is an \emph{$r$-vector} if it has non-negative coordinates that sum to $r$. We write $\bfu_j$ for the `unit' 1-vector that is 1 in coordinate $j$ and 0 in all other coordinates.
\item We say that $L$ is an \emph{edge-lattice} if it is generated by a set of $k$-vectors.
\item We write $L_{\max}^d$ for the lattice generated by all $k$-vectors. So $L_{\max}^d=\{ x\in \mathbb{Z}^d: k \text{ divides }\sum_{i\in [d]} x_i \}$.
\item We say that $L$ is \emph{complete} if $L=L_{\max}^d$, otherwise it is \emph{incomplete}.
\item A \emph{transferral} is a non-zero difference $\bfu_i - \bfu_j$ of 1-vectors.
\item We say that $L$ is \emph{transferral-free} if it does not contain any transferral.
\item We say that a set $I$ of $k$-vectors is \emph{full} if for every $(k-1)$-vector $\mathbf{v}$ there is some $i\in [d]$ such that $\mathbf{v}+\bfu_i\in I$.
\item We say that $L$ is \emph{full} if it contains a full set of $k$-vectors and is transferral-free.
\end{enumerate}
\end{definition}

We recall the following construction \cite[Construction 1.6]{KKM13} in the case when $k=4$.

\begin{construction}\cite{KKM13}\label{con:333}
Let $\calP=\{V_1, V_2, V_3\}$ be a partition of vertex set $|V|=n$, with $|V_1|=n/3-2$, $|V_2|=n/3$ and $|V_3|=n/3+2$. Fix some vertex $x\in V_2$, and let $H$ be the $4$-graph such that $E(H)$ consists of all $k$-sets $e$ with $\bfi_{\calP}(e)=(3,0,1), (0,3,1), (0,0,4), (2,2,0)$ or $(1,1,2)$ and all $k$-sets $e$ containing $x$ with $\bfi_{\calP}(e)=(0,1,3)$.
\end{construction}

Note that $\delta_3(H) = n/3 - 4$. It is not hard to see that $\bfi_{\calP}(V)\in L_{\calP}(H)$ but $H$ does not contain a perfect matching. Indeed, if a matching $M$ in $H$ does not contain any edge $e$ with index vector $(0,1,3)$, then $|V(M)\cap V_2| - |V(M)\cap V_1|\equiv 0$ (mod 3). Otherwise $M$ contains an edge with index vector $(0,1,3)$, thus we have $|V(M)\cap V_2| - |V(M)\cap V_1|\equiv 1$ (mod 3). In either case, $M$ is not perfect since $|V_2| - |V_1|=2$. 
In fact, as shown in \cite{KKM13}, $\bfi_{\calP}(V)\in L_{\calP}(H)$ holds for any $\calP$ of $V(H)$. Thus, having a divisibility barrier is not a necessary condition for $H$ not containing a perfect matching.

Note that when we determine if $\bfi_{\calP}(V)\in L_{\calP}(H)$, we are free to use any multiple of any vectors $\bfi\in I_{\calP}(H)$. But in Construction \ref{con:333}, all edges $e$ with $\bfi_{\calP}(e)=(0,1,3)$ contain $x$, thus a matching in $H$ can only contain one edge with index vector $(0,1,3)$. So although $\bfi_{\calP}(V)\in L_{\calP}(H)$, there is no perfect matching. 
Thus, it is natural to consider the following robust edge-lattice such that for every $k$-vector $\bfi\in I_{\calP}^{\mu}(H)$, there are many edges $e$ such that $\bfi_{\calP}(e)= \bfi$.

\begin{definition}[Robust edge-lattices] Let $H=(V, E)$ be a $k$-graph and $\calP$ be a partition of $V$ into $d$ parts. Then for any $\mu>0$,
\begin{enumerate}[(i)]
\item $I_{\calP}^\mu(H)$ denotes the set of all $\bfi\in \mathbb{Z}^d$ such that at least $\mu |V|^k$ edges $e\in H$ have $\bfi_{\calP}(e)=\bfi$.
\item $L_{\calP}^{\mu}(H)$ denotes the lattice in $\mathbb{Z}^d$ generated by $I_{\calP}^\mu(H)$.
\end{enumerate}
\end{definition}

We will show that there exists a partition $\calP$ of $V(H)$ and $\mu>0$, such that if $\bfi_{\calP}(V)\in L_{\calP}^{\mu}(H)$, then $H$ contains a perfect matching. 
Indeed, even a weaker condition suffices. If we can find a small matching $M$ such that $\bfi_{\calP}(V\setminus V(M))\in L_{\calP}^{\mu}(H[V\setminus V(M)])=L_{\calP}^{\mu}(H)$, then we can apply our proof above to show that $H[V\setminus V(M)]$ contains a perfect matching $M'$. Thus $M\cup M'$ is a perfect matching of $H$. Note that we can guarantee $L_{\calP}^{\mu}(H[V\setminus V(M)])=L_{\calP}^{\mu}(H)$ by selecting $\mu$ `wisely' and requiring that $M$ is small.
The following definitions are essentially from \cite{KKM13}. The only difference is that a full pair defined in \cite{KKM13} has at most $k-1$ parts. 

\begin{definition}\cite{KKM13}
Let $H=(V, E)$ be a $k$-graph.
\begin{enumerate}[\emph{(}i\emph{)}]
\item A \emph{full pair} $(\mathcal{P}, L)$ for $H$ consists of a partition $\calP$ of $V$ into $d\le k$ parts and a full edge-lattice $L\subset \mathbb{Z}_d$.
\item A (possibly empty) matching $M$ of size at most $|\calP|-1$ is a \emph{solution} for $(\calP, L)$ (in $H$) if $\bfi_{\calP}(V\setminus V(M))\in L$; we say that $(\calP, L)$ is \emph{soluble} if it has a solution, otherwise \emph{insoluble}.
\end{enumerate}
\end{definition}

The following lemma provides a partition $\calP_0$ such that we can develop the absorbing lemma on the pair $(\calP_0, L_{\calP_0}^\mu(H))$ for some $\mu>0$. 
For a small enough $\mu>0$, $I_{\calP_0}^\mu(H)$ is full. 
However, the pair $(\calP_0, L_{\calP_0}^\mu(H))$ may not be full because it may contain transferrals. Then we will obtain a full pair $(\calP_0', L_{\calP_0'}^\mu(H))$ from the pair $(\calP_0, L_{\calP_0}^\mu(H))$ by iteratively merging parts that contain transferrals. 

We use the reachability arguments introduced by Lo and Markstr\"om \cite{LM2, LM1}.
We say that two vertices $u$ and $v$ are \emph{$(\beta, i)$-reachable} in $H$ if there are at least $\beta n^{i k-1}$ $(i k-1)$-sets $S$ such that both $H[S\cup \{u\}]$ and $H[S\cup \{v\}]$ have perfect matchings. 
In this case, we call $S$ a \emph{reachable set} for $u$ and $v$.
We say that a vertex set $U$ is \emph{$(\beta, i)$-closed in $H$} if any two vertices $u,v\in U$ are $(\beta, i)$-reachable in $H$.
For two partitions $\calP, \calP'$ of a set $V$, we say that $\calP$ \emph{refines} $\calP'$ if every vertex class of $\calP$ is a subset of some vertex class of $\calP'$. 
Throughout this paper, $x\ll y$ means that for any $y> 0$ there exists $x_0> 0$ such that for any $x< x_0$ the following statement holds.

\begin{lemma}\label{lem:PL}
Given an integer $k\ge 3$, for any $0<\r \ll 1/k$, suppose that
$1/n \ll \{\beta, \mu\} \ll \r$.
Then for each $k$-graph $H$ on $n$ vertices with $\delta_{k-1}(H)\ge n/k-\r n$, we find partitions $\calP_0=\{V_1, \dots, V_d \}$ and $\calP_0'=\{V_1', \dots, V_{d'}'\}$ of $V(H)$ in time $O(n^{2^{k-1}k+1})$ satisfying the following properties:
\begin{enumerate}[\emph{(}i\emph{)}]
\item $\calP_0$ refines $\calP_0'$ and $(\calP_0', L_{\calP_0'}^{\mu}(H))$ is a full pair,
\item each partition set of $\calP_0$ or $\calP_0'$ has size at least $n/k-2\r n$,
\item for each $D\subseteq V(H)$ such that $\bfi_{\calP_0'}(D)\in L_{\calP_0'}^\mu(H)$, we have $\bfi_{\calP_0}(D)\in L_{\calP_0}^\mu(H)$,
\item for each $i\in [d]$, $V_i$ is $(\beta, 2^{k-1})$-closed in $H$.
\end{enumerate}
\end{lemma}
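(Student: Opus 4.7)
The construction proceeds in two stages: first build the closeness partition $\calP_0$ via iterated reachability arguments, then coarsen it by iteratively merging classes linked by transferrals in the robust edge-lattice to obtain the transferral-free partition $\calP_0'$.

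\emph{Stage 1: constructing $\calP_0$.} I would apply the reachability closure technique of Lo and Markstr\"om~\cite{LM1,LM2}. A double-counting argument using $\delta_{k-1}(H)\ge n/k-\r n$ shows that the $(\beta_1,1)$-reachability relation on $V(H)$ has components of size at least $n/k-2\r n$ (for $\beta_1\gg 1/n$), so at most $k$ components arise. The standard doubling identity---$(\beta_i,2^{i-1})$-reachability of $u,w$ and $w,v$ implies $(\beta_{i+1},2^i)$-reachability of $u,v$ with $\beta_{i+1}=\Omega(\beta_i^2)$---iterated at most $k-1$ times promotes each component to a $(\beta,2^{k-1})$-closed class. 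Take $\calP_0=\{V_1,\dots,V_d\}$ to be this partition; then $d\le k$, $|V_i|\ge n/k-2\r n$, establishing (iv) and the $\calP_0$ half of (ii).

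\emph{Stage 2: constructing $\calP_0'$ and verifying fullness.} Starting from $\calP_0$, whenever a transferral $\bfu_i-\bfu_j$ lies in $L_{\calP_0}^{\mu}(H)$ for some $i\ne j$ I merge $V_i$ and $V_j$; iterating until no transferral remains yields $\calP_0'=\{V_1',\dots,V_{d'}'\}$. By construction $L_{\calP_0'}^{\mu}(H)$ is transferral-free, and each class of $\calP_0'$ is a union of $\calP_0$-classes, inheriting the size bound in (ii). To see that $I_{\calP_0'}^{\mu}(H)$ contains a full set of $k$-vectors: for each non-negative $(k-1)$-vector $\bfv\in\mathbb{Z}^{d'}$ summing to $k-1$, the number of $(k-1)$-sets $T$ with $\bfi_{\calP_0'}(T)=\bfv$ is $\Omega(n^{k-1})$ (every part has linear size), and each such $T$ has at least $n/k-\r n$ neighbours in $V$. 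By pigeonhole over the at most $k$ parts, some extension index $\bfv+\bfu_j$ is realised by $\Omega(n^k)$ edges, hence lies in $I_{\calP_0'}^{\mu}(H)$ once $\mu$ is chosen small enough. This proves (i).

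\emph{Stage 3: property (iii) and running time.} Let $\pi:\mathbb{Z}^{d}\to\mathbb{Z}^{d'}$ be the coordinate-summing projection associated with the merging. For each $\bfi\in I_{\calP_0'}^\mu(H)$ the $\calP_0$-lifts of $\bfi$ form a set of bounded size $C_k$, so pigeonhole produces a lift $\tilde\bfi$ attained by at least $\mu n^k/C_k$ edges; fixing $\mu$ below this threshold at the outset forces $\tilde\bfi\in I_{\calP_0}^{\mu}(H)\subseteq L_{\calP_0}^{\mu}(H)$. If $\bfi_{\calP_0'}(D)=\sum_{\mathbf{w}} c_\mathbf{w}\mathbf{w}$ is a decomposition in $L_{\calP_0'}^\mu(H)$, then $\bfi_{\calP_0}(D)-\sum_{\mathbf{w}} c_\mathbf{w}\tilde{\mathbf{w}}$ lies in $\ker\pi$, which is generated precisely by the transferrals $\bfu_i-\bfu_j$ used in the mergers---each already in $L_{\calP_0}^{\mu}(H)$. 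Hence $\bfi_{\calP_0}(D)\in L_{\calP_0}^{\mu}(H)$, giving (iii). The running time is dominated by certifying $(\beta,2^{k-1})$-reachability: for each of the $O(n^2)$ pairs $(u,v)$, enumerate all $(2^{k-1}k-1)$-sets $S$ and check in constant time whether both $H[S\cup\{u\}]$ and $H[S\cup\{v\}]$ admit a perfect matching, totalling $O(n^{2^{k-1}k+1})$. The main technical obstacle is the tight calibration of the single parameter $\mu$ to simultaneously support the robust edge-lattice and the lift argument underlying (iii); this is resolved by fixing $\mu$ in advance below all of the finitely many pigeonhole thresholds.
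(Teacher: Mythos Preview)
Your proposal has two genuine gaps.

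\textbf{Stage 1.} The $(\beta_1,1)$-reachability relation is not transitive, so the only sensible ``components'' are the connected components of the reachability graph $G$. The doubling identity shows that two vertices at $G$-distance at most $2^{k-1}$ become $(\beta,2^{k-1})$-reachable; it does \emph{not} close an entire component unless you first bound its diameter, and nothing in your argument does so (minimum degree roughly $n/k$ in $G$ does not give a diameter bound of $2^{k-1}$). The paper's construction is different and more delicate: it first shows that among any $k+1$ vertices two are $1$-reachable, then takes the \emph{largest} $d$ for which one can find $v_1,\dots,v_d$ that are pairwise not $2^{k+1-d}$-reachable. These $d$ vertices act as centres: by maximality every other vertex is $2^{k-d}$-reachable to some $v_i$, and the resulting classes (after absorbing an $o(n)$ remainder) are $2^{k-d+1}$-closed, hence $2^{k-1}$-closed. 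It is this extremal choice of $d$, balancing the number of parts against the reachability level, that replaces the missing diameter bound.

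\textbf{Stage 3.} Your lift $\tilde\bfi$ is only guaranteed to lie in $I_{\calP_0}^{\mu/C_k}(H)$, not in $I_{\calP_0}^{\mu}(H)$; shrinking $\mu$ beforehand cannot help, since the ratio between the required threshold and the achieved one is always $C_k$. The paper handles this by choosing $\mu$ \emph{adaptively from $H$}: starting from some $\mu_0$, it repeatedly replaces $\mu$ by $\mu/K$ (with $K=(k+1)^{d-1}$) until $I_{\calP_0}^{\mu}(H)=I_{\calP_0}^{\mu/K}(H)$, which must happen within $\binom{k+d-1}{k}$ steps because that bounds the total number of $k$-vectors. With this stabilised $\mu$ one has $L_{\calP_0}^{\mu}(H)=L_{\calP_0}^{\mu/K}(H)$, so the factor $(k+1)$ lost at each merge step (your kernel argument, which is essentially the paper's Claim on projections) is absorbed. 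Your phrase ``fixing $\mu$ in advance below all of the finitely many pigeonhole thresholds'' does not capture this, because which vectors are $\mu$-robust depends on $H$, not just on $k$.
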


Given integers $n\ge k\ge 3$, let $\mathcal{H}_{n,k}$ be the collection of $k$-graphs $H$ such that there is a partition of $V(H)= X\cup Y$ such that $n/k-|X|$ is odd and all edges of $H$ intersect $X$ at an odd number of vertices. Note that the members of $\mathcal{H}_{n,k}$ are subhypergraphs of the $k$-graphs in Construction \ref{con13} and thus none of them has a perfect matching.

Now we are ready to state our main structural theorem.

\begin{theorem}\label{thm:PM}
Fix an integer $k\ge 3$. Suppose
\[
1/n_0 \ll \{\beta, \mu\} \ll \r \ll 1/k.
\]
Let $H$ be a $k$-graph on $n\ge n_0$ vertices such that $\delta_{k-1}(H)\ge n/k$ with $\calP_0$ and $\calP_0'$ found by Lemma~\ref{lem:PL}. Then $H$ contains a perfect matching if and only if the full pair $(\calP_0', L_{\calP_0'}^{\mu}(H))$ is soluble and $H\notin \mathcal H_{n,k}$.
\end{theorem}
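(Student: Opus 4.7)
The plan is to prove both directions separately, with the backward direction being the substantial work.

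For the forward direction, suppose $H$ has a perfect matching $M^*$. Then $H\notin \mathcal{H}_{n,k}$ is immediate because every hypergraph in $\mathcal{H}_{n,k}$ is a sub-hypergraph of Construction~\ref{con13} and hence (by the parity argument recalled after that construction) contains no perfect matching. To show solubility of $(\calP_0', L_{\calP_0'}^\mu(H))$, work in the finite quotient group $G := L_{\max}^{d'}/L_{\calP_0'}^\mu(H)$, which is finite because $L_{\calP_0'}^\mu(H)$ is full. Every edge of $M^*$ with index vector in $I_{\calP_0'}^\mu(H)$ contributes $0$ in $G$, while the number of distinct index vectors $\bfi$ of edges in $H$ is bounded by $\binom{k+d'-1}{d'-1}$. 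Summing over $M^*$ gives $\overline{\bfi_{\calP_0'}(V)}\in G$. By a pigeonhole argument combined with the fullness of $L_{\calP_0'}^\mu(H)$, one can remove at most $|\calP_0'|-1$ bad edges from $M^*$ to obtain $M_0$ with $\bfi_{\calP_0'}(V\setminus V(M_0))\in L_{\calP_0'}^\mu(H)$.

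For the backward direction, the plan is an absorption argument in three phases. \emph{Phase 1 (Absorber).} Using property~(iv) of Lemma~\ref{lem:PL} (each $V_i$ is $(\beta,2^{k-1})$-closed) together with the fullness of $L_{\calP_0'}^\mu(H)$, build, via a standard random selection + concentration argument, a small matching $M_{\text{abs}}$ with the absorbing property: for every $U\subseteq V(H)\setminus V(M_{\text{abs}})$ with $|U|\le\tau n$ and $\bfi_{\calP_0}(U)\in L_{\calP_0}^\mu(H)$, the hypergraph $H[V(M_{\text{abs}})\cup U]$ contains a perfect matching. The closedness supplies, for any pair $u,v$ in a common part, many $(2^{k-1}k{-}1)$-sets witnessing reachability, which serve as the combinatorial gadgets used to swap a vertex in $M_{\text{abs}}$ for any vertex in the same part; the fullness of the lattice then allows absorption of any index-balanced set $U$.

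\emph{Phase 2 (Apply the solution).} Let $M_0$ be a solution for $(\calP_0', L_{\calP_0'}^\mu(H))$, and set $H':= H[V(H)\setminus V(M_0)\setminus V(M_{\text{abs}})]$. Combining $\bfi_{\calP_0'}(V\setminus V(M_0))\in L_{\calP_0'}^\mu(H)$ with property~(iii) of Lemma~\ref{lem:PL} yields $\bfi_{\calP_0}(V\setminus V(M_0))\in L_{\calP_0}^\mu(H)$; when building $M_{\text{abs}}$ we arrange that $\bfi_{\calP_0}(V(M_{\text{abs}}))$ also lies in this lattice, so the same holds for $V(H')$. \emph{Phase 3 (Near-perfect matching).} Inside $H'$, which inherits minimum codegree approximately $n/k$, find a matching $M'$ that leaves only a set $U$ of size $|U|\le\tau n$ with $\bfi_{\calP_0}(U)\in L_{\calP_0}^\mu(H)$. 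This is where the hypothesis $H\notin\mathcal{H}_{n,k}$ enters: the parity barrier of Construction~\ref{con13} is precisely the obstruction that could force the leftover $U$ into a wrong lattice coset relative to the partition $\{X,Y\}$, and excluding $\mathcal{H}_{n,k}$ rules this out. Applying Phase~1 to $U$ yields a perfect matching $M''$ of $H[V(M_{\text{abs}})\cup U]$, and $M_0\cup M'\cup M''$ is the desired perfect matching of $H$.

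The main obstacle is Phase 3: achieving a near-perfect matching whose leftover lies in the correct lattice class, under the tight codegree bound $\delta_{k-1}(H)\ge n/k$. The difficulty is twofold: the codegree barely passes the threshold guaranteeing almost-perfect matchings (cf.\ the author's result \cite{Han14_mat}), and the leftover must be \emph{simultaneously} small and in $L_{\calP_0}^\mu(H)$. The first part is handled by applying the almost-perfect matching result on carefully chosen subhypergraphs; the second is enforced through a localised swapping argument that uses the reachability structure of $\calP_0$ together with the exclusion of $\mathcal{H}_{n,k}$ to preserve the lattice-class invariant. I expect this balancing of cardinality against lattice membership to require the most delicate case analysis, paralleling the role of the $\mathcal{H}_{n,k}$-avoidance in clearing the final parity obstruction.
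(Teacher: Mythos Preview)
Your forward direction is fine and essentially reproves Lemma~\ref{lem69} (the paper simply cites that lemma: the perfect matching $M$ gives $\bfi_{\calP_0'}(V\setminus V(M))=\mathbf 0$, then Lemma~\ref{lem69} reduces to a solution of size at most $|\calP_0'|-1$).

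The backward direction has a genuine gap. You attempt a single absorption scheme, but this breaks down precisely when $H$ is \emph{extremal}, i.e.\ close to the space barrier (Construction~\ref{con:sb}). After you remove $V(M_{\text{abs}})\cup V(M_0)$, the hypergraph $H'$ has codegree only $\ge n/k - o(n)$; the almost-perfect-matching theorem (Theorem~\ref{thm:next} in the paper, from \cite{Han14_mat}) requires that $H'$ be \emph{not} $5k\r$-extremal, and if $H$ contains an independent set of size $(1-\e)\tfrac{k-1}{k}n$ then so does $H'$. In that regime your Phase~3 simply has no tool to produce a near-perfect matching. The paper avoids this by splitting into two cases (Theorems~\ref{thm2} and~\ref{thm3}): in the non-extremal case the absorbing argument runs exactly as you outline, using only solubility and \emph{not} the hypothesis $H\notin\mathcal H_{n,k}$; in the extremal case one abandons absorption altogether and instead exploits the explicit structure near the space barrier, which is where $H\notin\mathcal H_{n,k}$ is actually consumed.

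Relatedly, your description of where $H\notin\mathcal H_{n,k}$ enters is off. It is not a local obstruction to the lattice class of the leftover $U$ relative to $\calP_0$; rather it is a global parity condition with respect to a \emph{different} bipartition $\{X,Y\}$ that only becomes relevant when $H$ is extremal (see Lemmas~\ref{lem:main}--\ref{lem:last}). In the non-extremal absorbing argument the correct lattice class of the leftover is secured purely by the solubility of $(\calP_0',L_{\calP_0'}^\mu(H))$ together with a coset-group pigeonhole step (Claim~\ref{clm:35}) and property~(iii) of Lemma~\ref{lem:PL}; the hypothesis $H\notin\mathcal H_{n,k}$ plays no role there.
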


We first prove the forward implication. 
The following lemma from \cite{KKM13} says that we can omit the condition on the size of $M$ when considering solubility. Although the definition of full pairs is slightly different in \cite{KKM13}, the same proof works in our case.

\begin{lemma}\cite[Lemma 6.9]{KKM13}\label{lem69}
Let $(\calP, L)$ be a full pair for a $k$-graph $H$, where $k\ge 3$. Then $(\calP, L)$ is soluble if and only if there exists a matching $M$ in $H$ such that $\bfi_{\calP}(V(H)\setminus V(M))\in L$.
\end{lemma}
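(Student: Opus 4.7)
The forward direction is immediate from the definition of solubility: a solution is by definition a matching $M$ of size at most $|\calP|-1$ with $\bfi_\calP(V(H)\setminus V(M))\in L$, and such a matching trivially witnesses the right-hand side. So the content lies in the converse.

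The plan for the converse is the following. Let $M$ be a matching in $H$ with $\bfi_\calP(V(H)\setminus V(M))\in L$ of minimum size, and set $d:=|\calP|$. I will assume $|M|\ge d$ for contradiction and produce a proper sub-matching $M^*\subsetneq M$ still satisfying $\bfi_\calP(V(H)\setminus V(M^*))\in L$. Since
\[
\bfi_\calP(V(H)\setminus V(M^*)) \;=\; \bfi_\calP(V(H)\setminus V(M)) \;+\; \sum_{e\in M\setminus M^*}\bfi_\calP(e),
\]
this reduces to finding a non-empty $T\subseteq M$ with $\sum_{e\in T}\bfi_\calP(e)\in L$.

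The key structural input is the fullness of $(\calP,L)$, which constrains the quotient group $Q:=\mathbb{Z}^d/L$. Transferral-freeness gives that the images $\bar{\bfu}_1,\ldots,\bar{\bfu}_d$ of the standard basis vectors are pairwise distinct in $Q$. The existence of a full set of $k$-vectors in $L$ gives that \emph{every} $k$-vector $\bfi$ satisfies $\bar{\bfi}=\bar{\bfu}_j-\bar{\bfu}_i$ in $Q$ for suitable $i,j\in[d]$: picking any $j$ with $\bfi_j\ge1$ and applying fullness to the $(k-1)$-vector $\bfi-\bfu_j$ yields $i$ with $(\bfi-\bfu_j)+\bfu_i\in L$, whence $\bfi\equiv\bfu_j-\bfu_i\pmod{L}$. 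Encoding each $e\in M$ as a directed arc $(i_e,j_e)$ produces a directed multigraph $G$ on vertex set $[d]$ with $|M|$ arcs, and the task becomes combinatorial: find $T\subseteq E(G)$ whose net transferral $\sum_{e\in T}(\bfu_{j_e}-\bfu_{i_e})$ lies in $L$, subject to $|M\setminus T|\le d-1$.

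The main obstacle is achieving the sharp bound $d-1$. A naive pigeonhole on partial sums in $Q$ only reduces $|M|$ to $\le|Q|$, which can be much larger than $d$; obtaining the sharp bound requires exploiting that each increment is a \emph{single} transferral and that $L$ is transferral-free. The plan is to iteratively extract from $G$ either a directed cycle (whose net transferral is $0\in L$) or a short sub-collection whose transferrals sum to an element of $L\cap\{x\in\mathbb{Z}^d:\sum x_i=0\}$, using the transferral-free structure to rule out over-counting, and move these arcs into $T$; this trims $G$ down to at most $d-1$ arcs. The delicate combinatorial details of this extraction are precisely the content of \cite[Lemma 6.9]{KKM13}, and as noted in the excerpt, the proof there carries over verbatim under the slightly more permissive notion of full pair (up to $k$ parts instead of $k-1$) used in this paper, since the argument only relies on fullness and transferral-freeness of $L$.
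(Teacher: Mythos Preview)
Your dismissal of the pigeonhole argument is the main error. You work in $Q=\mathbb{Z}^d/L$ and claim pigeonhole only gets you down to $|Q|$ edges, ``which can be much larger than $d$''. But the partial sums $s_j=\sum_{i\le j}\bfi_\calP(e_i)$ are sums of $k$-vectors, hence all lie in $L_{\max}^d$; their residues therefore live in the coset group $G=L_{\max}^d/L$, not in all of $Q$. The paper states (Lemma~\ref{lem64}, also from \cite{KKM13}) that for a full lattice one has $|G|=|\calP|=d$. Thus among the $|M|+1\ge d+1$ partial sums two agree modulo $L$, giving a non-empty consecutive block $T$ with $\sum_{e\in T}\bfi_\calP(e)\in L$; delete it and iterate. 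This is exactly the argument the paper exhibits in the proof of Claim~\ref{clm:35} (the paper does not reprove Lemma~\ref{lem69} itself, only cites it), and it yields the sharp bound $|M|\le d-1$ in one stroke.

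Your alternative route via the directed multigraph is not wrong in spirit---your observation that every $k$-vector is $\equiv \bfu_j-\bfu_i\pmod L$ for some $i,j$ is correct and is essentially the content behind $|G|=d$---but the reduction to ``find a directed cycle'' is incomplete as stated: a directed multigraph on $d$ vertices with $d$ arcs need not contain a directed cycle (take $d$ parallel copies of a single non-loop arc). To finish you would still need to know that the partial walk positions take at most $d$ values in $G$, i.e.\ exactly Lemma~\ref{lem64}. So the right fix is not to elaborate the digraph picture but to invoke $|G|=d$ and run the straightforward pigeonhole on partial sums.
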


\begin{proof}[Proof of the forward implication of Theorem~\ref{thm:PM}]
If $H$ contains a perfect matching $M$, then $\bfi_{\calP_0'}(V(H)\setminus V(M))=\mathbf{0}\in L_{\calP_0'}^{\mu}(H)$. Since $(\calP_0', L_{\calP_0'}^{\mu}(H))$ is a full pair, by Lemma \ref{lem69}, it is soluble. Furthermore, $H\notin \mathcal H_{n,k}$ because no member of $\mathcal H_{n,k}$ contains a perfect matching.
\end{proof}

The proof of the backward implication is more involved. 
For this purpose, we develop a lattice-based absorbing method.
In order to use the absorbing method, we need to reserve $O(\log n)$ vertices for our absorbing matching and then look for an almost perfect matching in the remaining $k$-graph $H'$. But an almost perfect matching may not exist if $H'$ is close to the space barrier (Construction \ref{con:sb}). This means that our absorbing technique works only if $H$ is not extremal (not close to the space barrier). So we separate the proof into an \emph{extremal case} and a \emph{non-extremal case} and then handle the extremal case separately.
More precisely, we say that $H$ is \emph{$\r$-extremal} if $V(H)$ contains an independent subset of order at least $(1-\r)\frac{k-1}k n$. By picking  constants $0<\r, \e\ll 1/k$ such that $\e=11k\r$, the backward implication follows from the following two theorems immediately.

\begin{theorem}\label{thm2}
For any $0<\r \ll 1/k$, suppose that
$1/n \ll \{\beta, \mu\} \ll \r$.
Let $H$ be a $k$-graph on $n$ vertices such that $\delta_{k-1}(H)\ge n/k - \r n$ with $\calP_0$ and $\calP_0'$ found by Lemma \ref{lem:PL}. Moreover, if $H$ is not $11k\r$-extremal and $(\calP_0', L_{\calP_0'}^{\mu}(H))$ is soluble, then $H$ contains a perfect matching.
\end{theorem}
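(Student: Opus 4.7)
The plan is to apply a lattice-based absorbing argument with respect to the refined partition $\calP_0$, using the $(\beta, 2^{k-1})$-closedness of its parts (Lemma~\ref{lem:PL}(iv)) to build absorbers, and using the solubility of $(\calP_0', L_{\calP_0'}^{\mu}(H))$ together with Lemma~\ref{lem:PL}(iii) to guarantee that the residual vertex set has index vector in $L_{\calP_0}^{\mu}(H)$. The first main step is to construct an \emph{absorbing matching}: for each $\bfi\in I_{\calP_0}^{\mu}(H)$ and each $k$-tuple $T$ with $\bfi_{\calP_0}(T)=\bfi$, iterate the reachability within each part $V_i$ of $\calP_0$ to produce many disjoint bounded-size gadgets that contain a perfect matching both as-is and after $T$ is added. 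A standard probabilistic selection (in the spirit of \cite{RRS09, LM1}) then yields a matching $M^\ast$ in $H$ of size at most $\mu^2 n$ with the \emph{universal absorbing property}: for every set $U\subseteq V(H)\setminus V(M^\ast)$ with $|U|\le \mu^3 n$ and $\bfi_{\calP_0}(U)\in L_{\calP_0}^{\mu}(H)$, the induced $k$-graph $H[V(M^\ast)\cup U]$ has a perfect matching.

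Next, by Lemma~\ref{lem69} applied to the full pair $(\calP_0', L_{\calP_0'}^{\mu}(H))$, solubility produces a matching $M_0$ with $\bfi_{\calP_0'}(V(H)\setminus V(M_0))\in L_{\calP_0'}^{\mu}(H)$, which by a routine greedy swap using the codegree hypothesis can be taken disjoint from $V(M^\ast)$ and of constant size. Lemma~\ref{lem:PL}(iii) upgrades this to $\bfi_{\calP_0}(V(H)\setminus V(M_0))\in L_{\calP_0}^{\mu}(H)$, and subtracting $\bfi_{\calP_0}(V(M^\ast))\in L_{\calP_0}^{\mu}(H)$ yields $\bfi_{\calP_0}(V(H)\setminus(V(M^\ast)\cup V(M_0)))\in L_{\calP_0}^{\mu}(H)$. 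Set $H'=H-V(M^\ast)-V(M_0)$, so $\delta_{k-1}(H')\ge n/k-2\r n$ and $H'$ remains far from extremal. A near-perfect-matching result for non-extremal dense $k$-graphs (a version of Han's argument \cite{Han14_mat} adapted to the non-extremal regime, exploiting the absence of a large independent set) produces a matching $M_1$ in $H'$ missing a set $U$ with $|U|\le \mu^3 n$, moreover using only edges whose index vector lies in $L_{\calP_0}^{\mu}(H)$ (the edges with rare index vectors form a $O(\mu)$ fraction of $E(H)$ and can be discarded at the outset). Then $\bfi_{\calP_0}(V(M_1))\in L_{\calP_0}^{\mu}(H)$, hence $\bfi_{\calP_0}(U)\in L_{\calP_0}^{\mu}(H)$, and the universal absorbing property of $M^\ast$ produces a perfect matching $M_2$ on $V(M^\ast)\cup U$. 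Concatenating gives the perfect matching $M_0\cup M_1\cup M_2$ of $H$.

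The main obstacle is the construction of $M^\ast$ in Step~1 --- realising a universal absorbing property indexed by the full lattice $L_{\calP_0}^{\mu}(H)$. Because $\calP_0$ may carry transferrals (these are precisely what Lemma~\ref{lem:PL} merges away when passing to $\calP_0'$), the absorbers must combine two mechanisms: reachability-based swaps of single vertices within each part $V_i$, and direct absorbers realising each generating $k$-vector of $L_{\calP_0}^{\mu}(H)$, assembled into one matching that does not spoil the codegree budget for Step~3. A secondary but delicate point is the almost-perfect-matching step: since $\delta_{k-1}(H)$ can be exactly $n/k$, Han's bound $\delta_{k-1}(H)\ge n/k-1$ only forces a matching missing two vertices in the worst case, and it is the non-extremal hypothesis that rules out the space-barrier obstruction and lets us drive $|U|$ below the absorbing tolerance $\mu^3 n$.
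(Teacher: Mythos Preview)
Your overall architecture matches the paper's lattice-based absorbing scheme, but the step ensuring $\bfi_{\calP_0}(U)\in L_{\calP_0}^{\mu}(H)$ has a real gap. You want $M_1$ to use only edges with index vector in $L_{\calP_0}^{\mu}(H)$, and you justify this by discarding the $O(\mu)$ fraction of edges with rare index vectors. But the almost-perfect-matching theorem you invoke requires a minimum \emph{codegree} condition, and deleting an $O(\mu)$ fraction of edges can annihilate the codegree of specific $(k-1)$-sets: a set $S$ with $\bfi_{\calP_0}(S)=\bfv$ may have all of its $\ge n/k-\r n$ neighbours lying in a single part $V_j$ with $\bfv+\bfu_j\notin L_{\calP_0}^{\mu}(H)$; fullness only promises that \emph{some} extension $\bfv+\bfu_i$ is robust, not that $S$ actually has neighbours in $V_i$. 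The same problem undermines your claim that $\bfi_{\calP_0}(V(M^\ast))\in L_{\calP_0}^{\mu}(H)$: the reachable $(tk-1)$-sets inside each absorbing gadget are matched by arbitrary edges of $H$, whose index vectors need not lie in $L_{\calP_0}^{\mu}(H)$. So neither $\bfi_{\calP_0}(V(M^\ast))$ nor $\bfi_{\calP_0}(V(M_1))$ is known to sit in the robust lattice, and the subtraction that is supposed to yield $\bfi_{\calP_0}(U)\in L_{\calP_0}^{\mu}(H)$ is unjustified.

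The paper sidesteps all of this with a coset-group argument that your proposal is missing. It runs Theorem~\ref{thm:next} on $H'$ with \emph{no} restriction on edge types, leaving exactly $k$ uncovered vertices $S_0$. Working in the coset group $G=L_{\max}^{d'}/L_{\calP_0'}^{\mu}(H)$, which has order $|\calP_0'|\le k$ by Lemma~\ref{lem64}, a pigeonhole on partial sums over the edges of the current matching (Claim~\ref{clm:35}) locates at most $d'-1$ edges whose release makes the leftover set $D$ satisfy $\bfi_{\calP_0'}(D)\in L_{\calP_0'}^{\mu}(H)$, hence $\bfi_{\calP_0}(D)\in L_{\calP_0}^{\mu}(H)$ by Lemma~\ref{lem:PL}(iii). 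Since $|D|\le k^2$, one writes $\bfi_{\calP_0}(D)=\sum_{\bfv}a_{\bfv}\bfv$ with $|a_{\bfv}|\le C(k)$, handles the negative coefficients by releasing edges from a pre-stored constant-size matching $M_2$ containing $C(k)$ edges of each robust type, and then greedily absorbs the resulting bounded collection of $k$-sets, each with index vector in $I_{\calP_0}^{\mu}(H)$. Thus the absorbing lemma only ever absorbs a bounded number of $k$-sets with robust index vectors --- much weaker than the universal absorber you posit, which as stated also supplies no mechanism for negative coefficients in the lattice representation.
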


\begin{theorem}\label{thm3}
For any $0<\e \ll 1/k$ and sufficiently large integer $n$ the following holds. Suppose $H$ is a $k$-graph on $n$ vertices such that $\delta_{k-1}(H)\ge n/k$ and $H$ is $\e$-extremal. If $H\notin \mathcal H_{n,k}$, then $H$ contains a perfect matching.
\end{theorem}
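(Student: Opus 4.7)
The plan is a direct extremal analysis. Fix an independent set $I \subseteq V(H)$ of size $|I| \geq (1-\varepsilon)\tfrac{k-1}{k}n$ (guaranteed by $\varepsilon$-extremality) and set $S := V(H) \setminus I$; since every edge meets $S$, applying $\delta_{k-1}(H) \geq n/k$ to any $(k-1)$-subset of $I$ gives $|S| \geq n/k$, while extremality forces $|S| \leq n/k + \varepsilon(k-1)n/k$. Write $s := |S| - n/k$. I would first \emph{clean up} the partition by reassigning the $O(\varepsilon n)$ vertices with anomalous link structure, so that the updated $I$ remains independent while every $v$ in the updated $I$ (respectively $S$) has many edges whose other vertices lie in the expected parts: one $S$-vertex and $k-2$ $I$-vertices for $v \in I$, and $k-1$ $I$-vertices for $v \in S$. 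The codegree hypothesis guarantees that few vertices are anomalous, so the sizes of $I$ and $S$ change only by $O(\varepsilon n)$.

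Any perfect matching $M$ satisfies $\sum_{e \in M}(|e \cap S|-1) = s$, so $M$ must use \emph{crossing edges} (those with $|e \cap S| \geq 2$) to consume exactly $s$ excess $S$-vertices, with the remaining \emph{typical edges} having exactly one $S$-vertex each. I would build $M$ in two stages. Stage~1 selects a short matching $M_0$ of $O(\varepsilon n)$ crossing edges with total excess exactly $s$. Stage~2 finds a typical perfect matching of $H - V(M_0)$: pair each $v \in S \setminus V(M_0)$ with a $(k-1)$-subset $A_v \subseteq I \setminus V(M_0)$ so that $\{v\} \cup A_v \in E(H)$ and the $A_v$ partition $I \setminus V(M_0)$. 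The codegree hypothesis ensures every $(k-1)$-subset of the residual $I$-part has at least $|S \setminus V(M_0)| - O(\varepsilon n)$ extensions into the residual $S$-part, so a standard Hall-type (Aharoni--Haxell style) argument for the link hypergraph produces the required partition.

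The heart of the proof is Stage~1, where $H \notin \mathcal{H}_{n,k}$ is used. If every edge of $H$ has odd intersection with $S$, then each crossing-edge excess $|e \cap S| - 1$ is even, so the achievable excess from any matching of crossing edges is even; if $s$ is instead odd, then $(S,I)$ itself witnesses $H \in \mathcal{H}_{n,k}$, contradicting our hypothesis. Hence $H \notin \mathcal{H}_{n,k}$ either supplies an edge $e^*$ with $|e^* \cap S|$ even (to be placed in $M_0$ as a parity adjuster) or forces $s$ to be even automatically; either way a greedy construction using the codegree condition assembles $M_0$ with the required excess. The main obstacle is precisely this case analysis in Stage~1: matching up failure modes of the greedy construction with the defining conditions of $\mathcal{H}_{n,k}$, and verifying that the clean-up does not alter the relevant parity structure. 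Once Stage~1 is settled, Stage~2 is a routine bipartite-style hypergraph matching argument.
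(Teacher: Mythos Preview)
Your high-level strategy matches the paper's: partition into the independent side $I$ and its complement $S$, use a short matching to absorb the $s := |S| - n/k$ excess $S$-vertices (with $H \notin \mathcal{H}_{n,k}$ supplying exactly the parity correction you describe), and then complete with typical $1$-edges. The paper carries out Stage~2 via Pikhurko's $k$-partite codegree theorem rather than an Aharoni--Haxell argument, but either tool suffices once both sides have near-complete degree.

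The genuine gap is your clean-up. You assert that after reassigning $O(\varepsilon n)$ anomalous vertices, \emph{every} $v$ in the updated $S$ has many edges of the form $\{v\}\cup T$ with $T \in \binom{I}{k-1}$. Reassignment cannot achieve this: a vertex $v \in S$ possessing even \emph{one} such edge already prevents you from moving $v$ into $I$ (independence would be destroyed), yet such a $v$ can still be anomalous. The paper does not try to empty the bad set by reassignment. Instead it takes $C$ (your $I$) to be a \emph{maximum} independent set from the start --- which guarantees every bad vertex $v \in B$ has at least one $(k-1)$-set $S_v \subseteq C$ with $\{v\}\cup S_v \in E(H)$ --- and then covers all of $B$ with a short matching of such $1$-edges (via a K\"onig--Egerv\'ary argument in one case, and via a direct codegree observation when no $2$-edge meets both $A$ and $B$) before handling the excess. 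Your Stage~1 as written selects only crossing edges and never covers $B$, so these bad vertices fall through to Stage~2 and wreck the Hall-type argument there. The fix is to fold a ``cover $B$ by $1$-edges'' step into Stage~1; this is routine once $I$ is taken maximal, but it is not a reassignment, and the parity bookkeeping you flag must then be done relative to the fixed partition $(S,I)$ rather than a moving one.
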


Note that we only need that $(\calP_0', L_{\calP_0'}^{\mu}(H))$ is soluble in the non-extremal case  and $H\notin \mathcal H_{n,k}$ in the extremal case.

Let us compare our method and the traditional absorbing method and outline our proof of Theorem \ref{thm2}.
The absorbing method, initiated by R\"odl, Ruci\'nski and Szemer\'edi \cite{RRS06}, has been shown to be efficient in finding spanning structures in graphs and hypergraphs. 
For example, in order to get a perfect matching in a $k$-graph $H$, it is first shown that any $k$-set has many absorbing sets in $H$. Then we apply the probabilistic method to find a small matching that can absorb any (much smaller) collection of $k$-vertex sets. 

However, with the potential divisibility barriers, we cannot guarantee that every $k$-vertex set can be absorbed in general unless the minimum codegree is at least $(1/2+\r)n$. 
In this paper, we develop a lattice-based absorbing method to overcome this difficulty. 
More precisely, we first find a partition $\calP_0=\{V_1,\dots, V_d\}$ of $V(H)$ such that any two vertices from the same $V_i$ are reachable in $H$ (property (iv) of Lemma \ref{lem:PL}). 
Then we build our absorbing matching that can absorb any $k$-set $S$ with index vector $\bfi_{\calP_0}(S)\in I_{\calP_0}^\mu(H)$. 
After applying the almost perfect matching theorem (Theorem \ref{thm:next}), we will have only $k$ vertices left unmatched. 
Then the solubility condition guarantees that we can release some edges from the partial matching such that the set of unmatched vertices can be partitioned into $k$-sets $S_1,\dots, S_{d''}$ for some constant $d''$ such that $\bfi_{\calP_0}(S_i)\in I_{\calP_0}^\mu(H)$, so we can absorb them by the absorbing matching and get a perfect matching of $H$.

The rest of the paper is organized as follows. We prove Theorem \ref{thm2} in Section 3 and prove Theorem \ref{thm3} in Section 4, respectively. We show the algorithms and prove Theorem~\ref{thm:main} in Section 5. We give concluding remarks at the end.

\section{The Non-extremal Case}

In this section we prove Theorem \ref{thm2}.

\subsection{Tools}

We use the following theorem from \cite{Han14_mat} which is stated slightly differently, but can be easily derived from the original statement.

\begin{theorem}\cite[Theorem 1.4]{Han14_mat}\label{thm:next}
Suppose that $1/n\ll \r \ll 1/k$ and $n\in k\mathbb{N}$. Let $H$ be a $k$-graph on $n$ vertices with $\delta_{k-1}(H)\ge n/k - \r n$. If $H$ is not $5k\r$-extremal, then $H$ contains a matching that leaves $k$ vertices uncovered.
\end{theorem}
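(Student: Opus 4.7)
The plan is to establish Theorem~\ref{thm:next} by the absorbing method, tailored to the near-critical codegree regime $\delta_{k-1}(H)\ge n/k-\r n$. The first step is a reachability claim: under the stated codegree and the non-extremality hypothesis, any two vertices $u,v\in V(H)$ lie in many $(2k-1)$-sets $S$ such that both $H[S\cup\{u\}]$ and $H[S\cup\{v\}]$ admit perfect matchings. The weak codegree alone is insufficient, since the space barrier of Construction~\ref{con:sb} would make most pairs unreachable, so I would use the hypothesis that $V(H)$ contains no independent set of order $(1-5k\r)\tfrac{k-1}{k}n$ to force, for every vertex, enough edges inside a typical link to assemble these short absorbers.

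The next step is to choose a small absorbing matching $M_{\mathrm{abs}}\subseteq H$ of size $O(\log n)$ by a standard probabilistic selection, so that every vertex $v\in V(H)\setminus V(M_{\mathrm{abs}})$ can be swapped into $V(M_{\mathrm{abs}})$ at the cost of expelling one other vertex. Iterating this swap operation yields the property that, for any set $L$ of at most a modest number of leftover vertices, $V(M_{\mathrm{abs}})\cup L$ can be packed into a matching up to a residue of $k$ vertices. In parallel, I would apply a greedy or fractional-matching routine on $H-V(M_{\mathrm{abs}})$ to extract a matching covering all but $o(n)$ vertices, once again leaning on the non-extremality hypothesis to prevent the process from stalling near the space barrier.

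The final step is to feed the $o(n)$ remainder through $M_{\mathrm{abs}}$ to obtain a matching leaving exactly $k$ uncovered vertices. The main obstacle is that the codegree bound $n/k-\r n$ sits below even the perfect fractional matching threshold $n/k$, so neither the reachability claim nor the almost-perfect-matching step can rely on standard codegree arguments; both must use the non-extremality hypothesis essentially. Since Theorem~\ref{thm:next} is quoted from \cite{Han14_mat} and phrased there only in a slightly different form, the cleanest route in practice is to extract the non-extremal branch of the dichotomy already carried out in that paper, verifying that the quantitative constants match the statement as given here, rather than to redo the entire absorbing argument from scratch.
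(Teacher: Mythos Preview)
The paper does not prove Theorem~\ref{thm:next} at all; it is quoted from \cite{Han14_mat} with the remark that the present formulation ``can be easily derived from the original statement.'' Your final paragraph correctly identifies this, and that is exactly what the paper does: cite the result and move on.

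That said, the absorbing sketch you outline in the first two paragraphs is \emph{not} how \cite{Han14_mat} establishes the result, so if you were to fill in a proof it would be a genuinely different route. The argument in \cite{Han14_mat} is a direct extremal one: take a matching $M$ of maximum size, assume $|M|\le n/k-2$ so that at least $2k$ vertices are uncovered, and use the codegree condition to count edges meeting the uncovered set, eventually forcing either a larger matching via an augmenting swap or a large independent set (the extremal configuration). No absorbing family is built. Your absorbing approach would in principle work too, but it is heavier machinery than needed for an \emph{almost} perfect matching, and your reachability step (``any two vertices are $(\beta,2)$-reachable'') is exactly the kind of statement that fails at codegree $n/k-\r n$ even under non-extremality; indeed, the whole point of the present paper's lattice-based absorbing is that global reachability cannot be guaranteed here. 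So if you pursued your sketch literally, Step~1 would not go through as stated. The cleanest fix is the one you name at the end: invoke the non-extremal branch of \cite{Han14_mat} directly.
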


Although we are one step away from a perfect matching after applying Theorem \ref{thm:next}, it is not easy to finish the last edge (in many cases impossible).
Let us introduce the following definition and result in \cite{KKM13}.

\begin{definition}
Suppose $L$ is an edge-lattice in $\mathbb{Z}^{|\calP|}$, where $\calP$ is a partition of a set $V$.
\begin{enumerate}[\emph{(}i\emph{)}]
\item The \emph{coset group} of $(\calP, L)$ is $G=G(\calP, L)=L_{\max}^{|\calP|}/L$.
\item For any $\bfi\in L_{\max}^{|\calP|}$, the \emph{residue} of $\bfi$ in $G$ is $R_G(\bfi)=\bfi+L$. For any $A\subseteq V$ of size divisible by $k$, the \emph{residue} of $A$ in $G$ is $R_G(A)=R_G(\bfi_{\calP}(A))$.
\end{enumerate}
\end{definition}

\begin{lemma}\cite[Lemma 6.4]{KKM13}\label{lem64}
If $k\ge 3$ and $L$ is a full lattice, then $|G(\calP, L)|=|\calP|$.
\end{lemma}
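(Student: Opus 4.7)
The plan is to squeeze $|G|$ between $d$ and $d$, where $d=|\calP|$, by exploiting the two defining properties of a full pair separately.

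For the lower bound $|G|\geq d$, I would consider the $d$ classes $[\bfu_1],\ldots,[\bfu_d]\in\mathbb{Z}^d/L$. Transferral-freeness forces them pairwise distinct: an equality $[\bfu_i]=[\bfu_j]$ with $i\neq j$ would place the transferral $\bfu_i-\bfu_j$ in $L$. All pairwise differences lie in $L_{\max}^d$, so the $d$ classes belong to a single coset of $G=L_{\max}^d/L$ inside $\mathbb{Z}^d/L$; since every coset has exactly $|G|$ elements, $|G|\geq d$ follows.

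For the upper bound $|G|\leq d$, it suffices to show the coset $[\bfu_1]+G$ has no elements beyond the $d$ unit vectors, i.e., every $x\in\mathbb{Z}^d$ with $\sum_i x_i\equiv 1\pmod k$ is congruent modulo $L$ to some $\bfu_j$. The main mechanism is a rewrite rule supplied jointly by fullness and transferral-freeness: for every $(k-1)$-vector $\mathbf{v}$ there is a \emph{unique} $i(\mathbf{v})\in[d]$ with $\mathbf{v}+\bfu_{i(\mathbf{v})}\in L$ (uniqueness because two distinct choices would give a transferral in $L$), and hence $\mathbf{v}\equiv -\bfu_{i(\mathbf{v})}\pmod L$. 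I would first add a suitable element of $L$ to an arbitrary $x$ to arrange $x\geq 0$ componentwise, then iteratively subtract a $k$-vector $\mathbf{v}+\bfu_{i(\mathbf{v})}\in L$ with $\mathbf{v}\leq x$ and $x_{i(\mathbf{v})}>v_{i(\mathbf{v})}$, driving $\sum_i x_i$ down by $k$ at each step and terminating at a unit vector when $\sum_i x_i=1$.

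The main obstacle is the iteration step, which can get stuck if $x$ has very small support and no single $k$-vector of $L$ lies beneath it componentwise. To handle this I would combine two rewrites of opposite signs so that the net update preserves non-negativity while decreasing a suitable potential (for example $\sum_i x_i$, or the maximum coordinate if the sum gets stuck), gradually spreading the mass of $x$ until a clean reduction becomes available. An equivalent, more algebraic packaging is to identify $G$ with $T'/(L\cap T')$, where $T'=\{c\in\mathbb{Z}^d:\sum_i c_i=0\}\cong\mathbb{Z}^{d-1}$ is the sum-zero sublattice (using any fixed $k$-vector $\alpha\in L$ to split $L_{\max}^d=\mathbb{Z}\alpha\oplus T'$ and correspondingly $L=\mathbb{Z}\alpha\oplus(L\cap T')$), and then to verify $[T':L\cap T']=d$ by a Smith-normal-form computation on a spanning set of $L\cap T'$ built from differences of the $k$-vectors in $L$ supplied by fullness, with transferral-freeness being exactly the input that rules out the index dropping below $d$.
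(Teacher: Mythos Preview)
The paper does not give its own proof of this lemma: it is quoted verbatim from \cite{KKM13} and used as a black box, so there is no argument in the present paper to compare against.

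On the merits of your proposal: the lower bound $|G|\ge d$ is clean and correct. The upper bound, however, is not established. You correctly flag that the greedy subtraction can stall, and it genuinely does: take $x=(k+1)\bfu_1$ when $i\bigl((k-1)\bfu_1\bigr)\neq 1$. The only $(k-1)$-vector below $x$ is $(k-1)\bfu_1$, and the corresponding $k$-vector $(k-1)\bfu_1+\bfu_{i((k-1)\bfu_1)}$ is \emph{not} $\le x$, so no single step reduces $\sum_i x_i$. Your proposed remedy of ``combining two rewrites of opposite signs'' is not made precise; in the natural attempt (subtract $\mathbf{v}+\bfu_{i(\mathbf{v})}$ and add back some $(k-1)\bfu_i+\bfu_{i'}\in L$ to restore non-negativity) the coordinate sum is unchanged, so you have no decreasing potential. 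The alternative Smith-normal-form route via $T'/(L\cap T')$ is a valid reformulation, but you do not explain how fullness forces $[T':L\cap T']\le d$; the spanning set you describe (differences of the $k$-vectors supplied by fullness) is not shown to have any particular determinant or elementary-divisor structure, so the computation is not carried out.

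What is missing is a genuine structural step, not just bookkeeping. One workable line (and the one underlying the argument in \cite{KKM13}) is to show that the coset $\{[\bfu_1],\dots,[\bfu_d]\}$ is closed under the action of $G$ by proving directly that every $(k+1)$-vector $y$ satisfies $[y]\in\{[\bfu_1],\dots,[\bfu_d]\}$; this uses fullness applied to \emph{auxiliary} $(k-1)$-vectors (not necessarily sitting below $y$) together with transferral-freeness to cancel the obstruction. Your write-up would need to supply that step.
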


Suppose $I$ is a set of $k$-vectors of $\mathbb{Z}^d$.
Let $J(I)$ be the set of $l$-vectors $\bfi$ for $k\le l\le k^2$ such that $\bfi$ can be written as a linear combination of vectors in $I$, namely, there exists $a_{\bfv}(\bfi)\in \mathbb{Z}$ for all $\bfv\in I$ such that
\begin{equation}\label{eq:Cmax}
\bfi=\sum_{\bfv\in I}a_{\bfv}(\bfi)\bfv. 
\end{equation}
We denote $C(d, k, I)$ as the maximum of $|a_{\bfv}(\bfi)|, \bfv\in I$ over all $\bfi\in J(I)$ and denote $C(k):=\max C(d,k,I)$ over all sets $I$ of $k$-vectors of $\mathbb{Z}^d$ and all $1\le d\le k$.

Fix an integer $i>0$. For a $k$-vertex set $S$, we say a set $T$ is an \emph{absorbing $i$-set for $S$} if $|T|=i$ and both $H[T]$ and $H[T\cup S]$ contain perfect matchings. 
Now we may state our absorbing lemma.

\begin{lemma}[Absorbing Lemma]\label{lem:abs}
Suppose $1\le d\le k$ and
\[
1/n \ll 1/c \ll \{\beta, \mu\} \ll 1/k, 1/t, C(k)
\]
Suppose that $\calP_0=\{V_1, \dots, V_d\}$ is a partition of $V(H)$ such that for $i\in [d]$, $V_i$ is $(\beta, t)$-closed in $H$.
Then there is a family $\F_{abs}$ of disjoint $tk^2$-sets with size at most $c\log n$ such that $H[V(F)]$ contains a perfect matching for all $F\in \F_{abs}$ and every $k$-vertex set $S$ with $\bfi_{\calP_0}(S)\in I_{\calP_0}^{\mu} (H)$ has at least $2k^k C(k)$ absorbing $t k^2$-sets in $\F_{abs}$. 
\end{lemma}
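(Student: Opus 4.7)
The plan is to adapt the R\"odl-Ruci\'nski-Szemer\'edi absorbing framework to this lattice setting, using the $(\beta, t)$-reachability inside each partition class $V_i$ as the source of absorbing structure. The argument proceeds in two stages: first I would show that for each \emph{good} $k$-set $S$ (one with $\bfi_{\calP_0}(S) \in I_{\calP_0}^\mu(H)$) the number of absorbing $tk^2$-sets in $H$ is large; then a standard probabilistic selection produces the desired family $\F_{abs}$.

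\textbf{Building one absorbing $tk^2$-set.} Fix a good $k$-set $S = \{v_1, \dots, v_k\}$ with $v_j \in V_{i_j}$, and let $\bfi_0 = \bfi_{\calP_0}(S)$. Since $H$ has at least $\mu n^k$ edges of index vector $\bfi_0$, there are $\Omega(n^k)$ ordered $k$-tuples $(u_1, \dots, u_k)$ with $u_j \in V_{i_j}$ for each $j$ and $\{u_1, \dots, u_k\} \in E(H)$. For each such tuple and each $j$, the $(\beta, t)$-closedness of $V_{i_j}$ produces at least $\beta n^{tk-1}$ reachable $(tk-1)$-sets $S_j$ for the pair $(u_j, v_j)$; choosing $S_j$ disjoint from $S$, from $\{u_1, \dots, u_k\}$, and from $S_1, \dots, S_{j-1}$ loses only $O(n^{tk-2})$ options at each step. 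Setting
\[
T = \{u_1, \dots, u_k\} \cup S_1 \cup \cdots \cup S_k,
\]
we obtain a set of exactly $tk^2$ vertices for which $H[T]$ has a perfect matching (take the union of the perfect matchings of each $H[S_j \cup \{u_j\}]$) and $H[T \cup S]$ has a perfect matching (the edge $\{u_1,\dots,u_k\}$ together with the perfect matchings of each $H[S_j \cup \{v_j\}]$). Counting ordered tuples and dividing by the constant number of orderings yielding the same unordered $T$, each good $S$ has at least $\alpha n^{tk^2}$ absorbing $tk^2$-sets, for some constant $\alpha = \alpha(\mu, \beta, k, t) > 0$.

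\textbf{Probabilistic selection.} Let $\mathcal{T}$ denote the collection of all $tk^2$-sets $T \subseteq V(H)$ such that $H[T]$ spans a perfect matching, and include each $T \in \mathcal{T}$ in a random family $\F'$ independently with probability $p = K \log n / n^{tk^2}$, where $K = K(k, \alpha)$ is a sufficiently large constant. Then $\mathbb{E}[|\F'|] \le K \log n$, for each good $S$ the number $X_S$ of absorbers in $\F'$ satisfies $\mathbb{E}[X_S] \ge K\alpha \log n$, and the expected number of pairs $T \ne T'$ in $\F'$ with $T \cap T' \ne \emptyset$ is at most $p^2 (tk^2) n^{2tk^2 - 1} = O((\log n)^2 / n) = o(1)$. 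Chernoff's inequality applied to $|\F'|$ and to each $X_S$, together with a union bound over the at most $n^k$ good $k$-sets, and Markov's inequality applied to the number of intersecting pairs, show that with positive probability $|\F'| \le 2K \log n$, every good $S$ satisfies $X_S \ge K\alpha \log n / 2 \ge 2k^k C(k)$, and no two members of $\F'$ share a vertex. Discarding one set from each (rare) intersecting pair and setting $c = 2K$ yields the required family $\F_{abs}$.

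\textbf{Main obstacle.} The principal technical point is the lower bound of $\alpha n^{tk^2}$ on the number of absorbing $tk^2$-sets per good $S$: one must verify that stringing together an edge with matching index vector and $k$ reachability witnesses produces a set $T$ that simultaneously meets both perfect-matching requirements (on $T$ alone and on $T \cup S$), and that enough disjointness budget is available across all $k$ witnesses at once. The fact that $T$ decomposes into one edge $\{u_1,\dots,u_k\}$ and $k$ reachability blocks $S_j \cup \{u_j\}$ or $S_j \cup \{v_j\}$ makes both perfect matchings essentially forced, which is the key reason the construction goes through. Once this count is in place, the probabilistic finishing step is routine.
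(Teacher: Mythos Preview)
Your proposal is correct and follows essentially the same approach as the paper: first build each absorbing $tk^2$-set by fixing an edge $\{u_1,\dots,u_k\}$ with the same index vector as $S$ and then attaching a reachable $(tk-1)$-set for each pair $(u_j,v_j)$, then run a standard random-selection argument to extract a small disjoint family. The only cosmetic differences are that the paper selects from \emph{all} $tk^2$-sets (deleting non-absorbers at the end) rather than restricting to $\mathcal{T}$ up front, and uses Markov rather than Chernoff to control $|\F'|$; neither affects the argument.
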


We postpone the proof of the absorbing lemma to the end of this section and prove the main goal of this section, Theorem \ref{thm2} first.

\subsection{Proof of Theorem \ref{thm2}}

\begin{proof}[Proof of Theorem \ref{thm2}]
Fix $0<\r\ll 1/k$.
Write $C=C(k)$ and suppose
\[
1/n \ll 1/c\ll \{\beta, \mu\} \ll \r, C.
\]
Let $H$ be a $k$-graph on $n$ vertices such that $\delta_{k-1}(H)\ge n/k - \r n$ with $\calP_0$ and $\calP_0'$ found by Lemma \ref{lem:PL} satisfying properties (i)-(iv). Moreover, assume that $H$ is not $11k\r$-extremal and $(\calP_0', L_{\calP_0'}^{\mu}(H))$ is soluble.
Let $\calP_0=\{V_1,\dots, V_d\}$ and $\calP_0'=\{V_1', \dots, V_{d'}'\}$ and note that $d'\le d\le k$ by (ii).
We first apply Lemma \ref{lem:abs} on $H$ with $t=2^{k-1}$ and get a family $\F_{abs}$ of $2^{k-1}k^2$-sets with size at most $c\log n$ such that every $k$-set $S$ of vertices with $\bfi_{\calP_0}(S)\in I_{\calP_0}^{\mu}(H)$ has at least $2k^k C$ absorbing $2^{k-1} k^2$-sets in $\F_{abs}$. 

Since $(\calP_0', L_{\calP_0'}^{\mu}(H))$ is soluble, there exists a matching $M_1$ of size at most $d'-1$ such that $\bfi_{\calP_0'}(V(H)\setminus V(M_1))\in L_{\calP_0'}^{\mu}(H)$. Note that $V(M_1)$ may intersect $V(\F_{abs})$, but $M_1$ can only intersect at most $k(k-1)$ absorbing sets of $\F_{abs}$. Let $\F_{0}$ be the subfamily of $\F_{abs}$ obtained from removing the $2^{k-1}k^2$-sets that intersect $V(M_1)$. Let $M_0$ be the perfect matching on $V(\F_0)$ that consists of perfect matchings on each member of $\F_0$. Note that every $k$-set $S$ of vertices with $\bfi_{\calP_0}(S)\in I_{\calP_0}^{\mu}(H)$ has at least $2k^k C - k(k-1)$ absorbing sets in $\F_{0}$.

Now we switch to $\calP_0$. We want to `store' some edges for each $k$-vector in $I_{\calP_0}^{\mu}(H)$ for future use. More precisely, we find a matching $M_2$ in $V(H)\setminus V(M_{0}\cup M_1)$ which contains $C$ edges $e$ with $\bfi_{\calP_0}(e)=\bfi$ for every $\bfi\in I_{\calP_0}^{\mu}(H)$. So $|M_2|\le \binom{k+d-1}{k}C$ and by the greedy algorithm, the process is possible because $H$ contains at least $\mu n^k$ edges for each $k$-vector $\bfi\in I_{\calP_0}^{\mu}(H)$ and $|V(M_0\cup M_{1}\cup M_2)|\le 2^{k-1} k^2 c\log n + k(k-1) + \binom{k+d-1}{k}C < \mu n$.

\medskip
Let $H':=H[V(H)\setminus V(M_0\cup M_{1} \cup M_2)]$. Note that $|V(H')|\ge n - \mu n$. So
\[
\delta_{k-1}(H') \ge \delta_{k-1}(H) - \mu n \ge n/k - 2\r n \ge (1/k - 2\r)|V(H')|.
\]
Moreover, if $H'$ is $10k\r$-extremal, namely, $V(H')$ contains an independent subset of order at least
\[
(1-10k\r)\frac{k-1}{k}|V(H')| \ge (1-10k\r)\frac{k-1}{k} (n-\mu n) \ge (1-11k\r)\frac{k-1}{k}n,
\]
then $H$ is $11k\r$-extremal, a contradiction.
Now we can apply Theorem \ref{thm:next} on $H'$ with parameter $2\r$ in place of $\r$ and find a matching $M_3$ covering all but a set $S_0$ of $k$ vertices of $V(H')$. Note that we can absorb $S_0$ by $\F_{0}$ and get a perfect matching of $H$ immediately if $\bfi_{\calP_0}(S_0)\in I_{\calP_0}^{\mu}(H)$ (however, this may not be the case).

\medskip
Now we step back to the full pair $(\calP_0', L_{\calP_0'}^{\mu}(H))$.
Instead of index vectors, we consider the residues of $S_0$ and all edges in the matching $M_{0}\cup M_3$ with respect to $\calP_0'$. Recall that $\bfi_{\calP_0'}(V(H)\setminus V(M_1))\in L_{\calP_0'}^{\mu}(H)$. Note that, since $\calP_0$ refines $\calP_0'$, the index vectors of all edges in $M_2$ are in $I_{\calP_0'}^{\mu}(H)$. So we have $\bfi_{\calP_0'}(V(H)\setminus V(M_1\cup M_2))\in L_{\calP_0'}^{\mu}(H)$, namely, $R_G(V(H)\setminus V(M_1\cup M_2))=\mathbf{0}+ L_{\calP_0'}^{\mu}(H)$. 
Thus,
\[
 \sum_{e \in M_{0}\cup M_3} R_G(e)  +R_G(S_0) = \mathbf{0}+ L_{\calP_0'}^{\mu}(H).
\]
Suppose $R_G(S_0)=\bfv_{0}+L_{\calP_0'}^{\mu}(H)$ for some $\bfv_0\in L_{\max}^{d'}$ and we get
\[
\sum_{e \in M_{0}\cup M_3} R_G(e)  = -\bfv_0+ L_{\calP_0'}^{\mu}(H).
\]

\begin{claim}\label{clm:35}
There exist edges $e_1, \dots, e_{d''} \in M_{0}\cup M_3$ for some $d''\le d'-1$ such that 
\[
\sum_{i\in [d'']} R_G(e_i)  = -\bfv_0+ L_{\calP_0'}^{\mu}(H).
\]
\end{claim}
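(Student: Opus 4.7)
The plan is to use a pigeonhole argument in the coset group $G=G(\calP_0', L_{\calP_0'}^{\mu}(H))$. By Lemma~\ref{lem64}, since $(\calP_0', L_{\calP_0'}^{\mu}(H))$ is a full pair, we have $|G|=d'$. So the task reduces to showing that any sum of residues $\sum_{e\in \F}R_G(e)$ in $G$ over a finite set $\F$ of edges can be realized, with the same value in $G$, by a subset of $\F$ of size at most $d'-1$.

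To do this, I would enumerate the edges of $M_{0}\cup M_3$ arbitrarily as $f_1,\dots,f_N$ and form the partial sums
\[
s_0 := \mathbf{0}+L_{\calP_0'}^{\mu}(H),\qquad s_j := \sum_{i=1}^{j} R_G(f_i)\in G\quad\text{for } 1\le j\le N.
\]
If $N\ge d'$, then the $N+1\ge d'+1$ partial sums lie in a group of order $d'$, so by pigeonhole there exist $0\le a<b\le N$ with $s_a=s_b$. This means $\sum_{i=a+1}^{b}R_G(f_i)=\mathbf{0}+L_{\calP_0'}^{\mu}(H)$, so deleting $f_{a+1},\dots,f_b$ from the collection strictly decreases its size while leaving the sum of the remaining residues unchanged in $G$.

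Iterating this reduction, I end with a sub-collection $e_1,\dots,e_{d''}\subseteq M_{0}\cup M_3$ with $d''\le d'-1$ and
\[
\sum_{i\in[d'']}R_G(e_i)=\sum_{e\in M_{0}\cup M_3}R_G(e)=-\bfv_0+L_{\calP_0'}^{\mu}(H),
\]
as desired. There is no genuine obstacle here: the only thing to verify is that Lemma~\ref{lem64} is indeed applicable (which holds because $(\calP_0', L_{\calP_0'}^{\mu}(H))$ is a full pair by Lemma~\ref{lem:PL}(i)), and that the pigeonhole step is carried out inside the finite group $G$ rather than in $\mathbb{Z}^{d'}$ itself, so that the bound $|G|=d'$ does the work.
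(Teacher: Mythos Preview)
Your argument is correct and essentially identical to the paper's own proof: both enumerate the edges of $M_0\cup M_3$, form the $N+1$ partial sums in $G$, invoke Lemma~\ref{lem64} to get $|G|=d'$, apply the pigeonhole principle to delete a block of edges whose residues sum to the identity, and iterate until at most $d'-1$ edges remain.
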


\begin{proof}
We follow the proof of \cite[Lemma 6.10]{KKM13}.
Fix any set of edges $e_1,\dots, e_{l} \in M_{0}\cup M_3$ for $l\ge d'$, consider $l+1$ partial sums $\sum_{i\in [j]} R_G(e_i)$ for $j=0,1,\dots, l$, where the sum equals $\mathbf{0} + L_{\calP_0'}^{\mu}(H)$ when $j=0$. Since $G=G(\calP, L)$ is a group, the sums are still in $G$. By Lemma \ref{lem64}, $|G|=|\calP_0'|=d'$, then by the pigeonhole principle two of the partial sums must be equal, that is, there exist $0\le l_1<l_2\le l$ such that 
$\sum_{l_1< i\le l_2} R_G(e_i) = \mathbf{0} + L_{\calP_0'}^{\mu}(H)$. So we can delete them from the equation.
We can repeat this process until there are at most $d'-1$ edges.
\end{proof}

So we have $\sum_{i\in [d'']} \bfi_{\calP_0'}(e_i)+\bfi_{\calP_0'}(S_0)\in L_{\calP_0'}^{\mu}(H)$. Let $D:=\bigcup_{i\in [d'']}e_i\cup S_0$ satisfying that $k\le |D|=k d''+k\le k(d'-1)+k\le k^2$.
At last, we switch to $(\calP_0, L_{\calP_0}^{\mu}(H))$ again and finish the perfect matching by absorption. Since $\bfi_{\calP_0'}(D)\in L_{\calP_0'}^{\mu}(H)$, by Lemma \ref{lem:PL} (iii), we have $\bfi_{\calP_0}(D)\in L_{\calP_0}^{\mu}(H)$.
Thus, we have the following equation
\[
\bfi_{\calP_0}(D) = \sum_{\bfv\in I_{\calP_0}^{\mu}(H)} a_{\bfv} \bfv,
\]
where $a_{\bfv}\in \mathbb{Z}$ for all $\bfv\in I_{\calP_0}^{\mu}(H)$. Since the equation above is a special case of equation \eqref{eq:Cmax}, we have $|a_{\bfv}|\le C$ for all $\bfv\in I_{\calP_0}^{\mu}(H)$.
Noticing that $a_{\bfv}$ may be negative, we can assume $a_{\bfv}=b_{\bfv} - c_{\bfv}$ such that one of $b_{\bfv}, c_{\bfv}$ is a nonnegative integer and the other is zero for all $\bfv\in I_{\calP_0}^{\mu}(H)$. So, we have
\[
\sum_{\bfv\in I_{\calP_0}^{\mu}(H)} c_{\bfv} \bfv + \bfi_{\calP_0}(D) = \sum_{\bfv\in I_{\calP_0}^{\mu}(H)} b_{\bfv} \bfv.
\]
This equation means that given any family $\F$ consisting of disjoint $\sum_{\bfv}c_{\bfv}$ $k$-sets $e_1^{\bfv},\dots, e_{c_{\bfv}}^{\bfv}\subseteq V(H)\setminus D$ for $\bfv\in I_{\calP_0}^{\mu}(H)$ such that $\bfi_{\calP_0}(e_{i}^{\bfv})=\bfv$ for all $i\in [c_{\bfv}]$, we can regard $V(\F)\cup D$ as the union of $b_{\bfv}$ $k$-sets $S_1^{\bfv},\dots, S_{b_{\bfv}}^{\bfv}$ such that  $\bfi_{\calP_0}(S_j^{\bfv})=\bfv$, $j\in [b_{\bfv}]$ for all $\bfv\in I_{\calP_0}^{\mu}(H)$. 
Since $c_{\bfv}\le C$ for all $\bfv$ and $V(M_2)\cap D=\emptyset$, we can pick the family $\F$ as a subset of $M_2$.
In summary, starting with the matching $M_0\cup M_1\cup M_2\cup M_3$ leaving $S_0$ unmatched, we delete the edges $e_1, \dots, e_{d''}$ from $M_{0}\cup M_3$ given by Claim~\ref{clm:35} and then leave $D=\bigcup_{i\in [d'']}e_i\cup S_0$ unmatched.
Then we delete the family $\F$ of edges from $M_2$ and leave $V(\F)\cup D$ unmatched.
Finally, we regard $V(\F)\cup D$ as the union of at most $\binom{k+d'-1}{d'}C+k\le k^kC$ $k$-sets $S$ with $\bfi_{\calP_0}(S)\in I_{\calP_0}^{\mu}(H)$. 

Note that by definition, $e_1, \dots, e_{d''}$ may intersect at most $d''\le k-1$ absorbing sets in $\F_{0}$, which cannot be used to absorb the sets obtained above.
Since each $k$-set $S$ has at least $2k^k C - k(k-1)>k^k C+k-1$ absorbing sets in $\F_{0}$, we absorb them by $\F_{0}$ greedily and get a perfect matching of $H$.
\end{proof}

\subsection{Proof of the Absorbing Lemma}

\begin{claim}\label{clm:abs}
Suppose $V_i$ is $(\beta, t)$-closed in $H$ for all $i\in [d]$.
Then any $k$-set $S$ with $\bfi_{\calP_0}(S)\in I_{\calP_0}^{\mu}(H)$ has at least $\frac{\mu \beta^k}{2^{k+1}} n^{t k^2}$ absorbing $t k^2$-sets.
\end{claim}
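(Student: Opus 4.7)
The plan is to build each absorbing $tk^2$-set for $S = \{s_1,\dots,s_k\}$ by combining an edge $e \in H$ that shares the index vector of $S$ with $k$ pairwise disjoint reachable $(tk-1)$-sets, one per matched pair $(v_i,s_i)$. The two hypotheses supply the necessary counts: $\bfi_{\calP_0}(S)\in I_{\calP_0}^\mu(H)$ yields at least $\mu n^k$ edges $e$ with $\bfi_{\calP_0}(e) = \bfi_{\calP_0}(S)$, and the $(\beta,t)$-closedness of each part $V_i$ yields at least $\beta n^{tk-1}$ reachable $(tk-1)$-sets for each in-part pair.

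First I would discard edges meeting $S$ (at most $k n^{k-1}$ of them), leaving at least $\frac{\mu}{2} n^k$ choices of $e$ disjoint from $S$. For each such $e = \{v_1,\dots,v_k\}$, order the vertices so that $v_i$ and $s_i$ lie in the same part $V_{p(i)}$ of $\calP_0$; this is possible precisely because the index vectors agree as multisets over the parts. Then pick $W_1,\dots,W_k$ greedily from the reachable $(tk-1)$-sets for the pairs $(v_i,s_i)$, insisting each $W_i$ be disjoint from $S$, $e$, and all previously chosen $W_j$'s. The forbidden vertex set at step $i$ has size at most $2k + (i-1)(tk-1) \le tk^2$, and any fixed vertex lies in at most $(tk-1)\binom{n-1}{tk-2} = O(n^{tk-2})$ reachable $(tk-1)$-sets, so at least $\frac{\beta}{2}n^{tk-1}$ valid choices survive for each $W_i$ when $n$ is large. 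The union $T := e \cup W_1 \cup \cdots \cup W_k$ is then disjoint, has size $k + k(tk-1) = tk^2$, and is absorbing for $S$: a perfect matching of $H[T]$ is obtained by gluing together a perfect matching inside each $H[W_i \cup \{v_i\}]$, while a perfect matching of $H[T \cup S]$ is obtained from the edge $e$ together with a perfect matching inside each $H[W_i \cup \{s_i\}]$.

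Multiplying the counts produced at the two stages gives at least
\[
\frac{\mu}{2}\,n^k \cdot \left(\frac{\beta}{2}\,n^{tk-1}\right)^{\!k} \;=\; \frac{\mu\beta^k}{2^{k+1}}\,n^{tk^2}
\]
ordered tuples $(e,W_1,\dots,W_k)$, and since each absorbing $tk^2$-set arises from only a constant (in $n$) number of such tuples, this yields the claimed lower bound on the number of absorbing sets — the constant multiplicity is easily absorbed into the polynomial factor under the parameter hierarchy $1/n \ll \beta,\mu \ll 1/k,1/t$. The only real subtlety is verifying the disjointness budget in the greedy construction; beyond that, everything reduces to bookkeeping, and the two key inputs (matching $v_i$ to $s_i$ via equal index vectors, and abundant reachable sets) are exactly what $I_{\calP_0}^\mu$-membership and $(\beta,t)$-closedness are designed to deliver.
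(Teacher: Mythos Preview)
Your proposal is correct and follows essentially the same route as the paper: choose an edge $e$ disjoint from $S$ with the same index vector (at least $\tfrac{\mu}{2}n^k$ choices), match up $v_i$ with $s_i$ part-by-part, then greedily select disjoint reachable $(tk-1)$-sets (at least $\tfrac{\beta}{2}n^{tk-1}$ choices each), and verify that the union is absorbing exactly as you describe. The only deviation is that you flag the overcounting of ordered tuples versus unordered $tk^2$-sets and propose to absorb it into the hierarchy, whereas the paper simply records the product $\tfrac{\mu\beta^k}{2^{k+1}}n^{tk^2}$ as the number of absorbing sets without comment; for the downstream application in the Absorbing Lemma only the order of magnitude matters, so this distinction is immaterial.
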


\begin{proof}
For a $k$-set $S=\{y_1,\dots, y_k\}$ with $\bfi_{\calP_0}(S)\in I_{\calP_0}^{\mu}(H)$, we construct absorbing $t k^2$-sets for $S$ as follows. We first fix an edge $e=\{x_1, \dots, x_k\}$ in $H$ such that $\bfi_{\calP_0}(e)=\bfi_{\calP_0}(S)\in I_{\calP_0}^{\mu}(H)$ and $e\cap S=\emptyset$. 
Note that we have at least $\mu n^k - k n^{k-1} >\frac{\mu}2 n^k$ choices for such $e$. 
Without loss of generality, we may assume that for all $i\in [k]$, $x_i, y_i$ are in the same partition set of $\calP_0$. Since $x_i$ and $y_i$ are $(\beta, t)$-reachable in $H$, there are at least $\beta n^{t k-1}$ $(t k-1)$-sets $T_i$ such that both $H[T_i\cup \{x_i\}]$ and $H[T_i\cup \{y_i\}]$ have perfect matchings. We pick disjoint reachable $(t k-1)$-sets for each $x_i, y_i$, $i\in [k]$ greedily, while avoiding the existing vertices. Since the number of existing vertices is at most $t k^2+k$, we have at least $\frac{\beta}2 n^{t k-1}$ choices for such $(t k-1)$-sets in each step.
Note that each of $e\cup T_1\cup \cdots \cup T_{k}$ is an absorbing set for $S$. First, it contains a perfect matching because each $T_i\cup \{x_i\}$ for $i\in [k]$ spans $t$ disjoint edges. Second, $H[e\cup T_1\cup \cdots \cup T_{k}\cup S]$ also contains a perfect matching because $e$ is an edge and each $T_i\cup \{y_i\}$ for $i\in [k]$ spans $t$ disjoint edges.  So we find at least $\frac{\mu \beta^k}{2^{k+1}} n^{t k^2}$ absorbing $t k^2$-sets for $S$.
\end{proof}

\begin{proof}[Proof of Lemma \ref{lem:abs}]
We pick a family $\mathcal{F}$ of $t k^2$-sets by including every $t k^2$-set with probability $p=c n^{-t k^2} \log n$ independently, uniformly at random. Then the expected number of elements in $\mathcal{F}$ is $p\binom{n}{t k^2}\le \frac{c}{tk^2}\log n$ and the expected number of intersecting pairs of $t k^2$-sets is at most
\[
p^2\binom{n}{t k^2} \cdot t k^2 \cdot \binom{n}{t k^2-1} \le \frac{c^2 (\log n)^2}{n}=o(1).
\]
Then by Markov's inequality, with probability $1-1/(t k^2)-o(1)$, $\mathcal{F}$ contains at most $c \log n$ sets and they are pairwise vertex-disjoint.

For every $k$-set $S$ with $\bfi_{\calP_0}(S)\in I_{\calP_0}^{\mu}(H)$, let $X_S$ be the number of absorbing sets for $S$ in $\mathcal{F}$. Then by Claim \ref{clm:abs},
\[
\mathbb{E}(X_S) \ge p \frac{\mu \beta^k}{2^{k+1}} n^{tk^2} = \frac{\mu \beta^k c\log n}{2^{k+1}}.
\]
By Chernoff's bound, 
\[
\mathbb{P}\left( X_S\le \frac12 \mathbb{E}(X_S) \right) \le \exp\left\{ -\frac18 \mathbb{E}(X_S) \right\} \le \exp\left\{ -\frac{\mu \beta^k c \log n}{2^{k+4}} \right\}= o(n^{-k})
\]
because $1/c \ll \{\beta,\mu\}$.
Thus, with probability $1-o(1)$, for each $k$-set $S$ with $\bfi_{\calP_0}(S)\in I_{\calP_0}^{\mu}(H)$, there are at least
\[
\frac12\mathbb{E}(X_S) \ge \frac{\mu \beta^k c\log n}{2^{k+2}} \gg 2k^k C(k)
\]
absorbing sets for $S$ in $\F$. We obtain $\F_{abs}$ by deleting the elements of $\mathcal{F}$ that are not absorbing sets for any $k$-set $S$ and thus $|\F_{abs}|\le |\F|\le c\log n$.
\end{proof}

\subsection{Proof of Lemma \ref{lem:PL}}

In this subsection we prove Lemma \ref{lem:PL}. Our main goal is to build a partition $\calP=\{V_1,\dots, V_d\}$ of $V(H)$ for some $d\le k$ such that every $V_i$ is $(\beta, 2^{k-1})$-closed in $H$ for some $\beta>0$. 
For any $v\in V(H)$, let $\tilde{N}_{\beta, i}(v)$ be the set of vertices in $V(H)$ that are $(\beta, i)$-reachable to $v$.

\begin{proposition}\label{prop:Nv}
Suppose $H$ is a $k$-graph on $n$ vertices satisfying $\delta_{k-1}(H)\ge (1/k-\r)n$. For $\a>0$ and any $v\in V(H)$, $|\tilde{N}_{\a, 1}(v)| \ge (1/k - \r - 2k!\a) n$.
\end{proposition}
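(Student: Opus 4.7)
My plan is to double-count pairs $(u, S)$ where $u \ne v$, $|S| = k-1$, $u, v \notin S$, and both $S \cup \{v\}$ and $S \cup \{u\}$ are edges of $H$. Writing $L_w$ for the link $(k-1)$-graph of a vertex $w$, these are precisely the pairs with $S \in L_v \cap L_u$, and by the definition of reachability $|L_v \cap L_u| < \a n^{k-1}$ whenever $u \notin \tilde{N}_{\a,1}(v) \cup \{v\}$. The first ingredient is a lower bound on $d(v) := |L_v|$: each of the $\binom{n-1}{k-2}$ $(k-1)$-sets $S$ containing $v$ has $d_H(S) \ge \delta_{k-1}(H) \ge (1/k - \r)n$, and each edge through $v$ is counted exactly $k-1$ times this way, giving
\[
(k-1)\,d(v) \;=\; \sum_{\substack{S \ni v \\ |S| = k-1}} d_H(S) \;\ge\; \binom{n-1}{k-2}(1/k - \r)n.
\]

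Next, for $\Sigma := \sum_{u \ne v} |L_v \cap L_u|$: summing over $S \in L_v$ first, each such $S$ has $d_H(S) \ge (1/k - \r)n$ vertex-neighbors of which at most one equals $v$, so $\Sigma \ge d(v)\bigl((1/k-\r)n - 1\bigr)$. For the matching upper bound I would split by reachability: trivially $|L_v \cap L_u| \le d(v)$, and $|L_v \cap L_u| < \a n^{k-1}$ for $u \notin \tilde{N}_{\a,1}(v) \cup \{v\}$, hence $\Sigma \le |\tilde{N}_{\a,1}(v)|\,d(v) + n \a n^{k-1}$. Combining the two bounds and dividing by $d(v)$ gives
\[
|\tilde{N}_{\a,1}(v)| \;\ge\; (1/k - \r)n - 1 - \frac{\a n^k}{d(v)}.
\]

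Finally, substituting the lower bound on $d(v)$ and using $\binom{n-1}{k-2} \ge (n-k+2)^{k-2}/(k-2)!$ yields
\[
\frac{\a n^k}{d(v)} \;\le\; \frac{k!\,\a n}{1-\r k}\left(\frac{n}{n-k+2}\right)^{k-2}.
\]
Under the ambient hypotheses $1/n \ll \a \ll \r \ll 1/k$ inherited from Lemma~\ref{lem:PL}, both correction factors are arbitrarily close to $1$, so this quantity is at most $2k!\,\a n$, and the additive $-1$ is comfortably absorbed since $\a n \gg 1$ in that regime. The only (mild) obstacle is the constant tracking: the factor $k!$ in the stated error term is precisely the price paid for using the codegree condition to lower-bound $d(v)$, and the extra factor of $2$ provides just enough slack for the finite-$n$ and finite-$\r$ corrections.
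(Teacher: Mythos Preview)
Your argument is correct and is essentially the paper's own proof: both double-count pairs $(u,S)$ with $S\in N_H(v)$ and $S\cup\{u\}\in E(H)$, bound the sum below by $|N_H(v)|\,\delta_{k-1}(H)$ and above by $|\tilde N_{\a,1}(v)|\cdot|N_H(v)|+\a n^k$, and then use $|N_H(v)|\ge(1/k-\r)\binom{n-1}{k-1}$ (which the paper states as a $\delta_1$ bound and you derive explicitly). Your extra care with the $-1$ and the constant tracking is fine but not a genuinely different route.
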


\begin{proof}
First note that $\delta_{k-1}(H)\ge (1/k-\r) n$ implies that $\delta_{1}(H)\ge (1/k-\r) \binom{n-1}{k-1}$. Fix a vertex $v\in V(H)$, note that for any vertex $u$, $u\in \tilde{N}_{\a, 1}(v)$ if and only if $|N_H(u)\cap N_H(v)|\ge \a n^{k-1}$.
By double counting, we have
\[
 |N_H(v)|\delta_{k-1}(H) \le \sum_{S\in N_H(v)} \deg_H(S) < |\tilde{N}_{\a, 1}(v)|\cdot |N_H(v)|+n\cdot \a {n}^{k-1}.
\]
Thus, $|\tilde{N}_{\a, 1}(v)|> \delta_{k-1}(H) - \frac{\a n^k}{|N_H(v)|}\ge (1/k - \r-2k!\a) n$ as $|N_H(v)|\ge \delta_1(H)\ge (1/k-\r) \binom{n-1}{k-1}$.
\end{proof}

The following lemma provides the partition $\calP_0$ in Lemma \ref{lem:PL}. 

\begin{lemma}\label{lem:P}
Given $0<\a \ll \delta, \delta'$, there exists constant $\beta>0$ satisfying the following. Assume an $n$-vertex $k$-graph $H$ satisfies that $|\tilde{N}_{\a, 1}(v)| \ge \delta' n$ for any $v\in V(H)$ and $\delta_1(H)\ge \delta\binom{n-1}{k-1}$. Then we can find a partition $\calP_0$ of $V(H)$ into $V_1,\dots, V_d$ with $d\le \min\{\lfloor 1/\delta \rfloor, \lfloor 1/\delta' \rfloor\}$ such that for any $i\in [d]$, $|V_i|\ge (\delta' - \a) n$ and $V_i$ is $(\beta, 2^{\lfloor 1/\delta \rfloor-1})$-closed in $H$, in time $O(n^{2^{c-1}k+1})$.
\end{lemma}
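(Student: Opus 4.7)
The plan is to construct $\calP_0$ by iteratively building a reachability closure in the spirit of Lo--Markstr\"om. Starting from the $(\a,1)$-reachability relation supplied by the hypothesis $|\tilde N_{\a,1}(v)|\ge \delta' n$, at each step I would merge equivalence classes that share many reachable neighbours while ``doubling'' the reachability index. After at most $\lfloor 1/\delta\rfloor-1$ rounds, each part of the resulting partition will be $(\beta, 2^{\lfloor 1/\delta\rfloor-1})$-closed for some $\beta>0$ depending only on $\a,\delta,\delta'$.

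The key combinatorial step is a \emph{doubling lemma}: if $|\tilde N_{\beta,i}(u)\cap \tilde N_{\beta,i}(v)|\ge \eta n$ for some $\eta>0$, then $u$ and $v$ are $(\beta',2i)$-reachable in $H$ with $\beta' \ge C\beta^{2}\eta$ for an absolute constant $C=C(k,i)$. To prove it, for each of the $\ge \eta n$ common reachable vertices $w$ I would greedily pick disjoint $(ik-1)$-sets $S_1$ reachable for $\{u,w\}$ and $S_2$ reachable for $\{v,w\}$ (avoiding the $O(1)$ already-used vertices), yielding at least $\tfrac{1}{2}\beta^2 n^{2ik-2}$ ordered pairs. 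The $(2ik-1)$-set $S_1\cup S_2\cup\{w\}$ is then reachable for $\{u,v\}$: gluing the perfect matchings of $H[S_1\cup\{u\}]$ and $H[S_2\cup\{w\}]$ produces a perfect matching of $H[S_1\cup S_2\cup\{w\}\cup\{u\}]$, and symmetrically for $v$. Dividing by the at most $(2ik-1)2^{2ik-2}$ triples that collapse to the same $S$ gives the quantitative conclusion.

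With this tool in hand the iteration proceeds as follows: at step $i$ I have a partition $\calP^{(i)}$ into $(\beta_i,2^{i-1})$-closed classes, and whenever two classes contain vertices $u,v$ with $|\tilde N_{\beta_i,2^{i-1}}(u)\cap \tilde N_{\beta_i,2^{i-1}}(v)|\ge \a n$ I merge them via the doubling lemma to form a $(\beta_{i+1},2^i)$-closed class. Pigeonhole on the $\delta' n$ lower bound for each reachable neighbourhood forces such a merge whenever the number of parts exceeds $\lfloor 1/\delta'\rfloor$, which together with the size bound $|V_i|\ge (\delta'-\a)n$ yields $d\le \lfloor 1/\delta'\rfloor$. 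The stronger constraint $d\le \lfloor 1/\delta\rfloor$ comes from $\delta_1(H)\ge \delta\binom{n-1}{k-1}$ via a parallel edge-count showing that every closure class absorbs a $\delta$-fraction of the vertices reachable from any of its members. Since each merge strictly decreases the number of parts, the process terminates within $\lfloor 1/\delta\rfloor-1$ rounds.

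The main obstacle is tracking the reachability parameter through iteration: each doubling costs a factor $\beta_{i+1}\approx C\beta_i^2\a$, so the hierarchy $\a\ll\delta,\delta'$ must be set up so that $\beta=\beta_{\lfloor 1/\delta\rfloor-1}$ remains a positive constant after the bounded number of rounds. The running time follows from certifying $(\beta_i,2^{i-1})$-reachability of a pair $u,v$ by brute-force enumeration of all $(2^{i-1}k-1)$-subsets and checking the two perfect-matching conditions, costing $O(n^{2^{i-1}k+1})$ per round and dominated by the final $O(n^{2^{\lfloor 1/\delta\rfloor-1}k+1})$.
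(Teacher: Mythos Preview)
Your doubling lemma is correct and is indeed the engine behind the proof, but the iterative merging scheme as described has a real gap: you never specify the starting partition $\calP^{(1)}$, and no obvious candidate works. The $(\a,1)$-reachability relation is not transitive, so it has no ``equivalence classes''; if you start from singletons you have $n$ parts and a bounded number of rounds cannot reduce this to $O(1)$; and if within a single round you chain merges $A\to A\cup B\to A\cup B\cup C\to\cdots$, each link doubles the reachability index, so the final index is not $2^{i}$ unless the chain length is bounded by something you have not yet established. Your sentence ``each merge strictly decreases the number of parts, the process terminates within $\lfloor 1/\delta\rfloor-1$ rounds'' conflates rounds with merges, and the claimed pigeonhole ``forces such a merge whenever the number of parts exceeds $\lfloor 1/\delta'\rfloor$'' only shows that some vertex is reachable to a vertex in another part, not that two parts have $\a n$ common reachable neighbours.

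Your use of $\delta_1(H)\ge\delta\binom{n-1}{k-1}$ is also off: it does not say that a closure class ``absorbs a $\delta$-fraction'' of anything. What it does give, via inclusion--exclusion on neighbourhoods, is that any $c+1$ vertices (with $c=\lfloor 1/\delta\rfloor$) contain a $(\beta_0,1)$-reachable pair. This is the pivot of the paper's argument, which is top-down rather than bottom-up: take $d$ to be the \emph{largest} integer for which some $v_1,\dots,v_d$ are pairwise not $(\beta_{c+1-d},2^{c+1-d})$-reachable (the observation above gives $d\le c$ immediately), and set $U_i=\tilde N_{c-d}(v_i)\setminus\bigcup_{j\ne i}\tilde N_{c-d}(v_j)$. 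Maximality of $d$ forces every vertex into some $\tilde N_{c-d}(v_i)$ and forces each $U_i$ to be $2^{c-d}$-closed; your doubling lemma shows the pairwise intersections $\tilde N_{c-d}(v_i)\cap\tilde N_{c-d}(v_j)$ are tiny (else $v_i,v_j$ would be $2^{c+1-d}$-reachable), so the leftover $U_0$ has $o(n)$ vertices and is absorbed into the $U_i$'s with one further doubling. The bound $d\le\lfloor 1/\delta'\rfloor$ then follows from $|\tilde N_{c-d}(v_i)|\ge\delta' n$ together with the small-intersection estimate. Choosing the centres first is exactly what circumvents the chaining problem in your scheme.
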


We will use the following simple result from \cite{LM1} to prove Lemma \ref{lem:P}.

\begin{proposition}\cite[Proposition 2.1]{LM1}\label{prop21}
For $\e, \beta>0$ and integer $i\ge 1$, there exists $\beta_0>0$ and an integer $n_0$ satisfying the following. Suppose $H$ is a $k$-graph of order $n\ge n_0$ and there exists a vertex $x\in V(H)$ with $|\tilde{N}_{\beta, i}(x)|\ge \e n$. Then for all $0<\beta'\le \beta_0$, $\tilde{N}_{\beta,i}(x)\subseteq \tilde{N}_{\beta',i+1}(x)$.
\end{proposition}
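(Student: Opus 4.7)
The plan is to build each required $((i+1)k-1)$-reachable set for $x$ and $y$ by gluing a $k$-edge of $H$ to an existing $(ik-1)$-reachable set for $x$ and $y$. Concretely, fix $y\in \tilde N_{\beta,i}(x)$ and consider any reachable $(ik-1)$-set $T$ for $x,y$ together with any edge $R\in E(H)$ disjoint from $T\cup\{x,y\}$. Then $S':=T\cup R$ has size $(i+1)k-1$, and a perfect matching of $H[T\cup\{x\}]$ extended by $R$ is a perfect matching of $H[S'\cup\{x\}]$; the same works with $y$ in place of $x$. Hence $S'$ is $(i+1)k-1$-reachable for $x,y$, and the task reduces to counting such pairs $(T,R)$ and dividing out by multiplicities.

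The hypothesis supplies at least $\beta n^{ik-1}$ choices for $T$, so the delicate step is to guarantee a comparable lower bound on the number of edges $R$ disjoint from $T\cup\{x,y\}$; equivalently, on $e(H)$. For this I would exploit the assumption $|\tilde N_{\beta,i}(x)|\ge \e n$. For every vertex $z$ in this set, each of its $\ge \beta n^{ik-1}$ reachable $(ik-1)$-sets $T_z$ comes with a perfect matching of $H[T_z\cup\{z\}]$, which exhibits an edge of $H$ through $z$ lying inside $T_z\cup\{z\}$. Since a given edge $e\ni z$ lies in at most $\binom{n-k}{(i-1)k}$ such sets $T_z$, this yields $\deg_H(z)=\Omega(\beta n^{k-1})$ for every $z\in \tilde N_{\beta,i}(x)$, and summing over $z$ gives $e(H)=\Omega(\beta\e\, n^k)$.

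Combining these bounds, for each of the $\ge \beta n^{ik-1}$ sets $T$ there are, once $n$ is large, at least $e(H)-(ik+1)n^{k-1}=\Omega(\beta\e\, n^k)$ admissible edges $R$, producing $\Omega(\beta^2\e\, n^{(i+1)k-1})$ pairs $(T,R)$ in total. A fixed set $S'$ of size $(i+1)k-1$ can arise from at most $\binom{(i+1)k-1}{k}$ such pairs, one per $k$-edge of $H$ contained in $S'$. Dividing out yields at least $c\, n^{(i+1)k-1}$ distinct reachable $((i+1)k-1)$-sets for $x,y$ for some constant $c=c(k,i,\beta,\e)>0$. Taking $\beta_0:=c$ and $n_0$ large enough to absorb the lower-order terms gives the desired inclusion $\tilde N_{\beta,i}(x)\subseteq \tilde N_{\beta',i+1}(x)$ for every $\beta'\le\beta_0$.

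The main obstacle is the edge-count step, namely extracting a lower bound on $e(H)$ from nothing but the reachability hypothesis; I would handle it first, since once it is in hand the remaining double count is routine and the choice of $\beta_0$ is forced by the constants produced along the way.
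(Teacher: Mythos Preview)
The paper does not prove this proposition; it is quoted verbatim from \cite{LM1} and used as a black box, so there is no in-paper argument to compare against. Your proof is correct and self-contained: extending each reachable $(ik-1)$-set $T$ for $x,y$ by a disjoint edge $R$ certainly produces a reachable $((i+1)k-1)$-set, and your degree argument (each $z\in\tilde N_{\beta,i}(x)$ has $\deg_H(z)=\Omega(\beta n^{k-1})$ via the pigeonhole on reachable sets containing a given edge through $z$) correctly converts the hypothesis $|\tilde N_{\beta,i}(x)|\ge\e n$ into $e(H)=\Omega(\beta\e\,n^k)$. The double count and the overcount bound $\binom{(i+1)k-1}{k}$ are both valid, so $\beta_0$ can indeed be taken as the resulting constant $c(k,i,\beta,\e)$. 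This is essentially the argument in the Lo--Markstr\"om paper itself, so your route is the standard one.
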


\medskip
\begin{proof}[Proof of Lemma \ref{lem:P}]
Let $c=\lfloor 1/\delta \rfloor$ (then $(c+1)\delta -1>0$) and $\e=\a/c$.
We choose constants satisfying the following hierarchy
\[
1/n\ll \beta = \beta_{c-1}\ll \beta_{c-2}\ll \cdots \ll \beta_1 \ll \beta_0 \ll \e, (c+1)\delta-1.
\]

Throughout this proof, given $v\in V(H)$ and $i\in [c-1]$, we write $\tilde{N}_{\beta_i, 2^i}(v)$ as $\tilde N_{i}(v)$ for short. 
Note that for any $v\in V(H)$, $|\tilde N_0(v)|=|\tilde{N}_{\beta_0, 1}(v)|\ge |\tilde{N}_{\a, 1}(v)| \ge \delta' n$ because $\beta_0<\a$.
We also say $2^i$-reachable (or $2^i$-closed) for $(\beta_i, 2^i)$-reachable (or $(\beta_i, 2^i)$-closed). By Proposition \ref{prop21} and the choice of $\beta_i$'s, we may assume that $\tilde {N}_{i}(v)\subseteq \tilde {N}_{i+1}(v)$ for all $0\le i<c-1$ and all $v\in V(H)$.
Hence, if $W\subseteq V(H)$ is $2^i$-closed in $H$ for some $i\le c-1$, then $W$ is $ 2^{c-1}$-closed.

Recall that two vertices $u$ and $v$ are 1-reachable in $H$ if $|N_H(u)\cap N_H(v)|\ge \beta_0 n^{k-1}$. We first note that any set of $c+1$ vertices in $V(H)$ contains two vertices that are 1-reachable in $H$ because $\delta_1(H)\ge \delta\binom{n-1}{k-1}$ and $(c+1)\delta-1\ge \binom{c+1}2\beta_0$. 
Also we can assume that there are two vertices that are not $2^{c-1}$-reachable to each other, as otherwise $V(H)$ is $2^{c-1}$-closed and we get a trivial partition $\calP_0=\{V(H)\}$.

Let $d$ be the largest integer such that
there exist $v_1,\dots, v_{d}\in V(H)$ such that no pair of them are $2^{c+1-d}$-reachable in $H$. 
Note that $d$ exists by our assumption and $2\le d\le c=\lfloor 1/\delta \rfloor$ by our observation. 
Fix such $v_1,\dots, v_{d}\in V(H)$, by Proposition \ref{prop21}, we can assume that any two of them are not $2^{c-d}$-reachable in $H$. 
Consider $\tilde N_{c-d}(v_i)$ for all $i\in [d]$ and we have the following facts.
\begin{enumerate}[(i)]
\item Any $v\in V(H)\setminus\{v_1,\dots, v_{d}\}$ must be in $\tilde N_{c-d}(v_i)$ for some $i\in [d]$, as otherwise $v, v_1,\dots, v_{d}$ contradicts the definition of $d$. 

\item $|\tilde N_{c-d}(v_i)\cap \tilde N_{c-d}(v_j)|<\e n$ because $v_i, v_j$ are not $2^{c+1-d}$-reachable in $H$.
Indeed, otherwise we get at least
\[
\frac{\e n}{(2^{c+1-d}k-1)!} (\beta_{c-d}n^{2^{c-d}k-1} - n^{2^{c-d}k-2}) (\beta_{c-d}n^{2^{c-d}k-1} - 2^{c-d}k n^{2^{c-d}k-2})  \ge \beta_{c+1-d} n^{2^{c+1-d}k-1}
\]
reachable $(2^{c+1-d}k-1)$-sets for $v_i, v_j$, which means that they are $2^{c+1-d}$-reachable, a contradiction.
Note that we get the lower bound of the number of the reachable sets for $v_i, v_j$ above by fixing one element $w\in \tilde N_{c-d}(v_i)\cap \tilde N_{c-d}(v_j)$, one $2^{c-d}$-reachable set $S$ for $v_i$ and $w$ (not containing $v_j$), and then one $2^{c-d}$-reachable set for $v_j$ and $w$ (not intersecting $\{v_i\}\cup S$).
Finally, it is divided by $(2^{c+1-d}k-1)!$ to eliminate the effect of overcounting.
\end{enumerate}
Note that (ii) and $|\tilde N_{c-d}(v_i)|\ge |\tilde{N}_{0}(v_i)| \ge \delta' n$ for $i\in [d]$ imply $d\delta' n - \binom d2 \e n\le n$. So we have $d\le (1+d^2 \e)/\delta'$. Since $\e \le \a \ll \delta'$, we have $d\le \lfloor 1/\delta' \rfloor$ and thus, $d\le \min\{\lfloor 1/\delta \rfloor, \lfloor 1/\delta' \rfloor\}$.

For $i\in [d]$, let $U_i=(\tilde N_{c-d}(v_i)\cup \{v_i\})\setminus \bigcup_{j\in [d]\setminus \{i\}} \tilde N_{c-d}(v_j)$. Note that for $i\in [d]$, $U_i$ is $2^{c-d}$-closed in $H$. Indeed, if there exist $u_1, u_2\in U_i$ that are not $2^{c-d}$-reachable in $H$, then $\{u_1, u_2\}\cup (\{v_1,\dots, v_{d}\}\setminus\{v_{i}\})$ contradicts the definition of $d$.

Let $U_0=V(H)\setminus (U_1\cup\cdots \cup U_{d})$. By (i) and (ii), we have $|U_0|\le \binom{d}{2}\e n$. We will move vertices of $U_0$ greedily to $U_i$ for some $i\in [d]$. For any $v\in U_0$, since $|\tilde N_0(v)\setminus U_0|\ge \delta' n - |U_0|\ge d\e n$, there exists $i\in [d]$ such that $v$ is 1-reachable to at least $\e n$ vertices in $U_i$. In this case we add $v$ to $U_i$ (we add $v$ to an arbitrary $U_i$ if there are more than one such $i$). 
Let the resulting partition of $V(H)$ be $V_1,\dots, V_{d}$. 
Note that we have $|V_i|\ge |U_i|\ge |\tilde N_{c-d}(v_i)| - d \e n\ge |\tilde N_0 (v_i)| - c\e n \ge (\delta' - \a )n$. 
Observe that in each $V_i$, the `farthest' possible pairs are those two vertices both from $U_0$, which are $2^{c-d+1}$-reachable in $H$. 
Thus, each $V_i$ is $2^{c-d+1}$-closed in $H$, so $2^{c-1}$-closed in $H$ because $d\ge 2$. 

\medskip

We estimate the running time as follows.
First, for every two vertices $u, v\in V(H)$, we determine if they are $2^{i}$-reachable for $0\le i\le c-1$. This can be done by testing if any $(2^i k -1)$-set $S\in \binom{V(H)\setminus \{u,v\}}{2^i k -1}$ is a reachable set for $u$ and $v$, namely, if both $H[S\cup \{u\}]$ and $H[S\cup \{v\}]$ have perfect matchings or not, which can be checked by listing every set of $2^{i}$ edges on them, in constant time. If there are at least $\beta_{i} n^{2^{i}k-1}$ reachable $(2^{i}k-1)$-sets for $v_i$ and $v_j$, then they are $2^{i}$-reachable. Since we need time $O(n^{2^{c-1}k-1})$ to list all $2^{c-1}k-1$ sets for all pairs $u, v$ of vertices, this can be done in time $O(n^{2^{c-1}k+1})$. 
Second, we search the set of vertices $v_1,\dots, v_d$ such that no pair of them are $2^{c+1-d}$-reachable for all $2\le d\le c$. With the reachability information at hand, this can be done in time $O(n^{c})$.
We then fix the largest $d$ as in the proof. If such $d$ does not exist, then we get $\calP_0=\{V(H)\}$ and output $\calP_0$. Otherwise, we fix any $d$-set $v_1,\dots, v_d$ such that no pair of them are $2^{c+1-d}$-reachable. We find the partition $\{U_0, U_1, \dots, U_d\}$ by identifying $\tilde{N}_{c-d}(v_i)$ for $i\in [d]$, in time $O(n)$.
Finally we move vertices in $U_0$ to $U_1,\dots, U_d$, depending on $|\tilde{N}_0(v)\cap U_i|$ for $v\in U_0$ and $i\in [d]$, which can be done in time $O(n^2)$. Thus, the running time for finding $\calP_0$ is $O(n^{2^{c-1}k+1})$.
\end{proof}

\medskip
\begin{proof}[Proof of Lemma \ref{lem:PL}]
Fix $0<\r\ll 1/k$. We apply Lemma \ref{lem:P} with $\a\ll \r$, $\delta=1/k-\r$, and $\delta'=1/k-\r-2k!\a$ and get $\beta>0$.
Suppose
\[
1/n\ll\{\beta, \mu_0\} \ll \r, 2^{-k}.
\]
Let $H$ be a $k$-graph on $n$ vertices satisfying $\delta_{k-1}(H)\ge (1/k-\r)n$. By Proposition \ref{prop:Nv}, for any $v\in V(H)$, $\tilde{N}_{\a, 1}(v) \ge (1/k - \r - 2k!\a) n=\delta' n$.
Since we also have $\delta_1(H)\ge \delta\binom{n-1}{k-1}$, we apply Lemma \ref{lem:P} on $H$ and get a partition $\calP_0=\{V_1, \dots, V_d\}$ of $V(H)$ in time $O(n^{2^{k-1}k+1})$. 
Note that $|V_i|\ge (\delta'-\a)n \ge (1/k-2\r)n$ for all $i\in [d]$ because $\a\ll \r$. Also we know that $d\le \lfloor 1/\delta\rfloor=k$ and each $V_i$ is $(\beta, 2^{k-1})$-closed.

Let $K=(k+1)^{d-1}$. We claim that we can pick a constant $\mu$ such that $K^{-\binom{k+d-1}{k}}\mu_0\le \mu\le \mu_0$ and
\begin{equation}\label{eq:I}
L_{\calP_0}^\mu(H)=L_{\calP_0}^{\mu/K}(H).
\end{equation}
Indeed, it suffices to pick such a $\mu$ so that $I_{\calP_0}^\mu(H)=I_{\calP_0}^{\mu/K}(H)$. This means that we will not `witness' more vectors even if we loosen our selection parameter $\mu$ by a factor $K$. 
Note that $|L_{\max}^d|=\binom{k+d-1}{k}$.
So if $I_{\calP_0}^{\mu_0}(H)\neq I_{\calP_0}^{\mu_0/K}(H)$, we pick $\mu_0/K$ as the new candidate, check it and repeat until we get the desired $\mu$. Note that in each intermediate step for some $\mu'$, we witness at least one new vector in $I_{\calP_0}^{\mu'/K}(H)$. So the process will terminate in at most $\binom{k+d-1}{k}$ steps and the resulting value $\mu$ satisfying $\mu\ge K^{-\binom{k+d-1}{k}}\mu_0$. Note that we find $\mu$ in constant time and we have the same hierarchy of constants after replacing $\mu_0$ by $\mu$.

It is possible that $(\calP_0, L_{\calP_0}^{\mu}(H))$ contains transferrals. We merge $V_i$ and $V_j$ into one vertex set if the transferral $\bfu_i - \bfu_j$ appears in $L_{\calP_0}^{\mu}(H)$ and repeat until there is no transferral in the resulting pair, denoted by $(\calP_0', L_{\calP_0'}^{\mu}(H))$, where $\calP_0'=\{V_1', \dots, V_{d'}'\}$ for some $d'\le d\le k$. Note that we get $\calP_0'$ from $\calP_0$ in time $O(n^{k})$. Indeed, we merge parts at most $d-1$ times and in each step, we identify the set of robust edge vectors by visiting all edges of $H$ and then determine if any transferral appears in the lattice in constant time.
Thus, overall, we find the pair $(\calP_0', L_{\calP_0'}^{\mu}(H))$ in time $O(n^{k})$. 

\begin{claim}\label{clm:vector}
Fix $\mu>0$. Given a partition $\calP_1=\{V_1,\dots, V_{|\calP_1|}\}$ such that $\bfu_1 - \bfu_2\in L_{\calP_1}^{\mu}(H)$ and let $\calP_1'$ be the partition obtained from merging $V_1, V_2$ of $\calP_1$. Then for any $D\subseteq V(H)$ such that $\bfi_{\calP_1'}(D)\in L_{\calP_1'}^\mu(H)$, we have $\bfi_{\calP_1}(D)\in L_{\calP_1}^{\mu/(k+1)}(H)$.
\end{claim}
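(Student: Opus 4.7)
The plan is to exploit the natural projection $\pi \colon \mathbb{Z}^{|\calP_1|} \to \mathbb{Z}^{|\calP_1'|}$ which sums the first two coordinates (corresponding to merging $V_1$ and $V_2$) and leaves the remaining coordinates unchanged. For any subset $S \subseteq V(H)$ one has $\bfi_{\calP_1'}(S) = \pi(\bfi_{\calP_1}(S))$, and in particular this is true for $D$ and for every edge of $H$. The strategy is to lift a witnessing expression of $\bfi_{\calP_1'}(D)$ over $I_{\calP_1'}^{\mu}(H)$ to an expression over $I_{\calP_1}^{\mu/(k+1)}(H)$, modulo the transferral $\bfu_1 - \bfu_2$.

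First, I would show the key lifting fact: for every $\bfi' \in I_{\calP_1'}^{\mu}(H)$ there exists a vector $\bfj \in I_{\calP_1}^{\mu/(k+1)}(H)$ with $\pi(\bfj) = \bfi'$. Indeed, by definition at least $\mu n^k$ edges $e \in H$ satisfy $\bfi_{\calP_1'}(e) = \bfi'$. The fibre $\pi^{-1}(\bfi')$ intersected with the set of $k$-vectors has size at most $(\bfi')_1 + 1 \le k+1$, since only the split of the first coordinate of $\bfi'$ into the first two coordinates of the lift is free. By pigeonhole, one lift $\bfj$ is attained by at least $\mu n^k/(k+1)$ edges, and so $\bfj \in I_{\calP_1}^{\mu/(k+1)}(H)$.

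Next, using $\bfi_{\calP_1'}(D) \in L_{\calP_1'}^{\mu}(H)$, write $\bfi_{\calP_1'}(D) = \sum_{\bfi' \in I_{\calP_1'}^{\mu}(H)} a_{\bfi'} \bfi'$ for integers $a_{\bfi'}$. Choosing a lift $\bfj_{\bfi'}$ as above for each $\bfi'$ appearing, form $\bfw := \sum a_{\bfi'} \bfj_{\bfi'} \in L_{\calP_1}^{\mu/(k+1)}(H)$. Since $\pi(\bfw) = \bfi_{\calP_1'}(D) = \pi(\bfi_{\calP_1}(D))$, the difference $\bfi_{\calP_1}(D) - \bfw$ lies in $\ker \pi = \mathbb{Z}(\bfu_1 - \bfu_2)$, so it equals $c(\bfu_1 - \bfu_2)$ for some integer $c$.

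Finally, by hypothesis $\bfu_1 - \bfu_2 \in L_{\calP_1}^{\mu}(H)$, and trivially $L_{\calP_1}^{\mu}(H) \subseteq L_{\calP_1}^{\mu/(k+1)}(H)$ (a smaller threshold yields a larger generating set), so $c(\bfu_1 - \bfu_2) \in L_{\calP_1}^{\mu/(k+1)}(H)$ as well. Adding this to $\bfw$ gives $\bfi_{\calP_1}(D) \in L_{\calP_1}^{\mu/(k+1)}(H)$, as required. The only genuine content is the pigeonhole step producing the lift with loss factor $k+1$; the rest is bookkeeping about projections of lattices, so I do not anticipate a substantive obstacle.
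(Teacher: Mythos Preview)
Your proposal is correct and follows essentially the same argument as the paper: both introduce the projection that merges the first two coordinates, use pigeonhole over the at most $k+1$ lifts of each $\bfi'\in I_{\calP_1'}^{\mu}(H)$ to produce a lift in $I_{\calP_1}^{\mu/(k+1)}(H)$, form the lifted combination, and then absorb the kernel difference using the transferral $\bfu_1-\bfu_2\in L_{\calP_1}^{\mu}(H)\subseteq L_{\calP_1}^{\mu/(k+1)}(H)$. The only difference is cosmetic (your explicit $\pi$ and $\ker\pi=\mathbb{Z}(\bfu_1-\bfu_2)$ versus the paper's restriction notation).
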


\begin{proof}
For any vector $\bfv$ with respect to $\calP_1$, let $\bfv|_{\calP_1'}$ be the projection of $\bfv$ on $\calP_1'$, which is a vector with respect to $\calP_1'$.
Let $D\subseteq V(H)$ be any vertex set such that $\bfi_{\calP_1'}(D)\in L_{\calP_1'}^\mu(H)$.
So we have the equation $\bfi_{\calP_1'}(D) = \sum_{\bfv'\in I_{\calP_1'}^{\mu}(H)} a_{\bfv'} \bfv'$, where $a_{\bfv'}\in \mathbb{Z}$ for all $\bfv'\in I_{\calP_1'}^{\mu}(H)$. 
Note that for each $\bfv'\in I_{\calP_1'}^\mu(H)$, there exist at most $k+1$ vectors $\bfv_i\in L_{\max}^{|\calP_1|}$ such that $\bfv_i |_{\calP_1'}=\bfv'$. Thus, by the pigeonhole principle, there exists $\bfv\in I_{\calP_1}^{\mu/(k+1)}(H)$ such that $\bfv|_{\calP_1'}=\bfv'$.
Let $\bfi_0=\sum_{\bfv'\in I_{\calP_1'}^{\mu}(H)} a_{\bfv'} \bfv$, which is a $|D|$-vector in $L_{\calP_1}^{\mu/(k+1)}(H)$. Note that $\bfi_{\calP_1}(D)|_{\calP_1'} = \bfi_{\calP_1'}(D) = \bfi_0|_{\calP_1'}$. This implies that $\bfi_{\calP_1}(D) = \bfi_0$ or $\bfi_{\calP_1}(D) - \bfi_0$ equals a multiple of $\bfu_1 - \bfu_2$. Since $\bfu_1 - \bfu_2\in L_{\calP_1}^{\mu}(H)$, we have $\bfi_{\calP_1}(D) - \bfi_0\in L_{\calP_1}^{\mu}(H)$ and thus $\bfi_{\calP_1}(D)=\bfi_{\calP_1}(D) - \bfi_0+\bfi_0\in L_{\calP_1}^{\mu/(k+1)}(H)$.
\end{proof}

Now let us show Lemma \ref{lem:PL} (iii). Fix any $D\subseteq V(H)$ such that $\bfi_{\calP_0'}(D)\in L_{\calP_0'}^\mu(H)$. We apply Claim \ref{clm:vector} $d-d'$ times and get that $\bfi_{\calP_0}(D)\in L_{\calP_0}^{\mu/(k+1)^{d-d'}}(H)$. Since $\mu/K\le {\mu/(k+1)^{d-d'}}\le \mu$, by~\eqref{eq:I}, we get $\bfi_{\calP_0}(D)\in L_{\calP_0}^{\mu/(k+1)^{d-d'}}(H)=L_{\calP_0}^{\mu/K}(H)=L_{\calP_0}^{\mu}(H)$.

\medskip 

It remains to show that $(\calP_0', L_{\calP_0'}^{\mu}(H))$ is a full pair for $H$.
Indeed, since $(\calP_0', L_{\calP_0'}^{\mu}(H))$ is transferral-free, it remains to show that $I_{\calP_0'}^\mu(H)$ is full. Assume to the contrary, that there exists a $(k-1)$-vector $\bfv$ such that $\bfv+\bfu_i \notin I_{\calP_0'}^\mu(H)$ for all $i\in [d']$.
Note that since $\bfv+\bfu_i\notin I_{\calP_0'}^\mu(H)$, there are less than $\mu n^k$ edges $e$ in $H$ with $\bfi_{\calP_0'}(e)=\mathbf{v}+\bfu_{i}$. So there are less than $d' \mu n^k$ edges that contain some $(k-1)$-set with index vector $\bfv$. But since there are at least $\binom{\min_{j\in [d']} |V_j'|}{k-1}$ $(k-1)$-sets with index vector $\mathbf{v}$ and $\delta_{k-1}(H)\ge n/k-\r n$, the number of such edges is at least $\frac1k\left(\frac{n}{k}-\r n\right) \binom{\min_{j\in [d']} |V_j'|}{k-1}\ge \frac1k \left(\frac{n}{k}-\r n\right) \binom{n/k-2\r n}{k-1}>d'\mu n^k$, a contradiction.
\end{proof}

\section{The Extremal Case}

Our goal of this section is to prove Theorem \ref{thm3}.
We remark that the $k$-graphs in Construction \ref{con12} do not appear in our proof because they achieve smaller minimum codegrees than those $k$-graphs in Construction \ref{con13} if $k$ is even and Construction \ref{con12} and Construction \ref{con13} are the same if $k$ is odd.

We use the following result of Pikhurko \cite{Pik}, stated here in a less general form.

\begin{theorem}\cite[Theorem 3]{Pik}\label{thm:pik}
Let $H$ be a $k$-partite $k$-graph with the $k$-partition $V(H)=V_1\cup V_2\cup \cdots \cup V_k$ such that $|V_i|=m$ for all $i\in [k]$. Let $\delta_{\{1\}}(H)=\min \{ |N(v_1)|: v_1\in V_1 \}$ and
\[
\delta_{[k]\setminus \{1\}}(H) = \min\{ |N(v_2,\dots, v_k)|: v_i\in V_i \text{ for every }2\le i\le k \}.
\]
For sufficiently large integer $m$, if
\[
\delta_{\{1\}}(H) m + \delta_{[k]\setminus\{1\}}(H) m^{k-1} \ge \frac32 m^k,
\]
then $H$ contains a perfect matching.
\end{theorem}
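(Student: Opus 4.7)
I will prove Theorem \ref{thm:pik} by the absorbing method. Write $a := \delta_{\{1\}}(H)$ and $b := \delta_{[k]\setminus\{1\}}(H)$. Since $a \le m^{k-1}$ and $b \le m$, the hypothesis $am + bm^{k-1} \ge \tfrac{3}{2}m^k$ immediately forces $a \ge m^{k-1}/2$ and $b \ge m/2$; moreover the excess quantity $(a - m^{k-1}/2) m + (b - m/2) m^{k-1}$ is at least $m^k/2$, and this slack is what the counting argument will consume.

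The heart of the proof is an absorbing lemma: I claim that every transversal $T = (u_1, \ldots, u_k) \in V_1 \times \cdots \times V_k$ has $\Omega(m^{2k})$ \emph{absorbing pairs} $(e_1, e_2)$, meaning vertex-disjoint edges $e_1, e_2$ of $H$, both disjoint from $T$, such that $T \cup V(e_1) \cup V(e_2)$ can be repartitioned into three disjoint edges of $H$. To count, I fix a rearrangement template: writing $e_1 = (x_1, \ldots, x_k)$ and $e_2 = (y_1, \ldots, y_k)$, I prescribe three "swap" edges $f_1, f_2, f_3$ whose $3k$ vertex slots distribute the $3k$ symbols $x_i, y_i, u_i$ one per slot. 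I then count the $2k$-tuples $(x_1, y_1, \ldots, x_k, y_k)$ meeting the five edge constraints by picking coordinates in a carefully chosen order: each constraint of the form $(u_1, ?, \ldots, ?)$ supplies a factor of $\delta_{\{1\}}(H) \ge a$; each constraint of the form $(?, v_2, \ldots, v_k)$ with $v_2, \ldots, v_k$ already fixed supplies a factor of $\delta_{[k]\setminus\{1\}}(H) \ge b$; and the remaining constraints are handled by intersecting two neighbourhoods in $V_1$ (each of size $\ge b$) to obtain at least $2b - m$ common vertices. By averaging over the finite family of templates (indexed by the subsets $S \subseteq [k]$ of coordinates on which $f_1$ "swaps" $x_i$ with $u_i$), at least one template yields $\Omega(m^{2k})$ absorbing pairs in every regime of $(a,b)$ consistent with the hypothesis.

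Given the absorber count, a standard random-selection argument in the style of Lemma \ref{lem:abs} produces a matching $M_{\text{abs}} \subseteq H$ of size $O(\log m)$ in which every transversal $T$ has $\Omega(\log m)$ absorbing pairs. I then find a near-perfect matching in $H' := H - V(M_{\text{abs}})$: since $\delta_{[k]\setminus\{1\}}(H') \ge m/2 - o(m)$, I iteratively pick an uncovered transversal $(v_2, \ldots, v_k) \in V_2 \times \cdots \times V_k$ and extend it to an edge via the codegree, continuing as long as more than $(\log m)^2$ vertices per part remain uncovered. The leftover set $R$ is balanced across parts (each edge uses exactly one vertex per part), so $R$ decomposes into $O(\log m)$ transversals, which I absorb one at a time using the absorbing pairs of $M_{\text{abs}}$.

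The main obstacle will be the absorber count in the first step. The threshold $\tfrac{3}{2}$ is tight, so the five edge constraints of a single template must be "paid for" without multiplicative waste — in particular the intersection term $2b - m$ degenerates at the lower endpoint $b = m/2$, which forces a reliance either on a template whose constraints are distributed between the two degree bounds in a balanced way, or on an averaging over the $6^k$ possible rearrangement templates. Designing a family of templates that collectively captures $\Omega(m^{2k})$ absorbing pairs in every boundary regime of $(a,b)$ compatible with $am + bm^{k-1} \ge \tfrac{3}{2}m^k$ is the delicate bookkeeping I expect to take the most care.
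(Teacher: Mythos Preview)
The paper does not prove Theorem~\ref{thm:pik}: it is quoted verbatim from Pikhurko~\cite{Pik} and used as a black box in Section~4. So there is no in-paper proof to compare against; Pikhurko's own argument is a direct counting/stability argument, not an absorbing proof.

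Turning to your proposal on its own merits, there is a concrete gap in the near-perfect matching step. You write that since $\delta_{[k]\setminus\{1\}}(H') \ge m/2 - o(m)$ you can greedily match uncovered transversals $(v_2,\dots,v_k)$ into $V_1$ until only $(\log m)^2$ vertices remain in each part. This fails: once roughly $m/2$ vertices of $V_1$ have been used, an uncovered transversal may have its entire $V_1$-neighbourhood already consumed, and the greedy process stalls with $\Theta(m)$ vertices per part still uncovered. The vertex-degree bound $a\ge m^{k-1}/2$ rules out the most degenerate configurations, but it does not by itself rescue a naive greedy extension; getting down to a leftover of polylogarithmic size at the sharp threshold $am+bm^{k-1}=\tfrac32 m^k$ requires a genuine argument (Hall-type defect analysis, random greedy with the degree condition on $V_1$, or a stability step), which your outline does not supply.

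The absorber count is also not carried out. You correctly identify that the constraint $2b-m$ degenerates when $b=m/2$, and you propose averaging over a family of swap templates, but you do not exhibit even one template together with a counting order that yields $\Omega(m^{2k})$ absorbers at the boundary of the hypothesis. Since you flag this as the main obstacle, the proposal as it stands is a plan rather than a proof sketch: both the absorber existence and the almost-cover step need substantive arguments that are currently missing.
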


\subsection{Preliminary and the proof of Theorem \ref{thm3}}

Fix a sufficiently small $\e>0$. Let $n$ be a sufficiently large integer.
Suppose $H$ is a $k$-graph on $n$ vertices such that $\delta_{k-1}(H)\ge n/k$ and $H\notin \mathcal{H}_{n,k}$. Assume that $H$ is $\e$-extremal, namely, there is an independent subset $S\subseteq V(H)$ with $|S| \ge (1-\e)\frac{k-1}k n$. 
Let $\a=\e^{1/3}$. We partition $V(H)$ as follows. Let $C$ be a maximum independent subset of $V(H)$. Define
\begin{equation}\label{eq:A}
A=\left\{x\in V\setminus C: \deg(x, C)\ge (1-\a) \binom{|C|}{k-1}\right\},
\end{equation}
and $B=V(H)\setminus (A\cup C)$. We first observe the following bounds of $|A|, |B|, |C|$.

\begin{claim}\label{clm:size}
$|A|\ge n/k-\a^2 n$, $|B|\le \a^2 n$, and $(1-{\e})\frac {(k-1)n}k\le |C|\le \frac {(k-1)n}k$.
\end{claim}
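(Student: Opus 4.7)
My plan is to establish the three bounds in order, each by a short counting argument that exploits the codegree hypothesis together with the independence of $C$.

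First I would dispatch the bounds on $|C|$. The lower bound $|C| \ge (1-\e)(k-1)n/k$ is immediate from the extremality hypothesis: since $C$ is a \emph{maximum} independent set, $|C|$ is at least the size of the independent set promised by $\e$-extremality. For the upper bound $|C| \le (k-1)n/k$, I would fix any $(k-1)$-subset $S \subseteq C$ (which exists since $|C|$ is large) and apply the codegree hypothesis: $\deg_H(S) \ge n/k$, and since $C$ is independent every neighbor of $S$ lies in $V(H) \setminus C$. Hence $|V(H)\setminus C| \ge n/k$, equivalently $|C| \le (k-1)n/k$.

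Next, for $|B|$ I would double-count pairs $(S,x)$ with $S \in \binom{C}{k-1}$, $x \in V(H)\setminus C$, and $S \cup \{x\} \in E(H)$. On the codegree side, because $C$ is independent every neighbor of such an $S$ lies in $V(H)\setminus C$, so the number of pairs is at least $\binom{|C|}{k-1}\cdot n/k$. On the other side, the count equals $\sum_{x \in V(H)\setminus C} \deg(x,C)$, which by the definitions of $A$ and $B$ (see \eqref{eq:A}) is at most $|A|\binom{|C|}{k-1} + (1-\a)|B|\binom{|C|}{k-1}$. Dividing by $\binom{|C|}{k-1}$ and using $|A|+|B| = n - |C|$ gives $n/k \le (n-|C|) - \a|B|$, i.e.\ $\a|B| \le (k-1)n/k - |C| \le \e(k-1)n/k$ by the lower bound on $|C|$ already established. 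Since $\a = \e^{1/3}$, this yields $|B| \le \e^{2/3}(k-1)n/k \le \a^2 n$.

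The bound on $|A|$ is then immediate from $|A| = n - |C| - |B|$ together with the upper bounds $|C| \le (k-1)n/k$ and $|B| \le \a^2 n$, giving $|A| \ge n/k - \a^2 n$. There is no real obstacle here; the argument is routine once the setup is in place. The only point worth highlighting is that the tuning $\a = \e^{1/3}$ is exactly what makes $\e/\a = \a^2$, converting the codegree deficit into the claimed error term, and that the independence of $C$ is what lets the codegree bound $\deg(S) \ge n/k$ be charged entirely to neighbors in $V(H)\setminus C$.
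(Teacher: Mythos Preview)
Your proof is correct and follows essentially the same approach as the paper: the bounds on $|C|$ use the independence of $C$ plus the codegree condition, the bound on $|B|$ comes from the same double-count of edges with exactly one vertex outside $C$ (what the paper calls $e((A\cup B)C^{k-1})$), and the bound on $|A|$ follows immediately. The only cosmetic difference is that the paper phrases the intermediate inequality as $\a|B|\le |A|+|B|-n/k\le \e n$ rather than your $\a|B|\le (k-1)\e n/k$, but these are equivalent.
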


\begin{proof}
The lower bound for $|C|$ follows from our hypothesis immediately. For any $S\subseteq C$ of order $k-1$, we have $N(S)\subseteq A\cup B$. 
By the minimum degree condition, we have 
\begin{equation}\label{eq:ab}
\frac{n}k \le |N(S)| \le |A|+|B| =n-|C| \le \frac nk + \e \frac {(k-1)n}k,
\end{equation}
which gives the upper bound for $|C|$.
By the definitions of $A$ and $B$, we have
\[
\frac{n}k \binom{|C|}{k-1} \le e((A\cup B)C^{k-1})\le (1-\a)\binom{|C|}{k-1} |B| + \binom{|C|}{k-1} |A|,
\]
where $e((A\cup B)C^{k-1})$ denotes the number of edges that contain $k-1$ vertices in $C$ and one vertex in $A\cup B$.
Thus, we get $n/k \le |A|+|B|-\a |B|$, which gives that $\a |B| \le |A|+|B| - n/k\le \e n$ by \eqref{eq:ab}.
So $|B|\le \a^2 n$ and by \eqref{eq:ab} again, $|A|\ge n/k-|B|\ge n/k-\a^2 n$. 
\end{proof}

The partition which we will work on in this section is $\calP=(A\cup B, C)$. For $0\le i\le k$, we say an edges $e$ is \emph{an $i$-edge} if $|e\cap (A\cup B)|=i$. 
We remark that as mentioned before, since $H$ is close to the space barrier, it is rather `fragile' -- even the bad choice of one edge may lead the remaining $k$-graph into the space barrier, so we cannot use the robust edge-lattice and apply the discussions in Section 3.

Let us list our auxiliary lemmas.

\begin{lemma}\label{lem:main}
Fix any even $2\le i\le k$. Assume that $|A\cup B|\ge n/k+i-1$ and $H$ contains no $j$-edge for all even $0\le j\le i-2$. If $H$ contains an $i$-edge, then $H$ contains a perfect matching.
\end{lemma}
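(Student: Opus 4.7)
The plan is to build the perfect matching from a single chosen $i$-edge $e_0$, a constant-size auxiliary matching $M_0$ that absorbs $B$ and balances the residual vertex counts, and a bulk perfect matching of 1-edges extracted from Pikhurko's Theorem~\ref{thm:pik}. Let $t:=|A\cup B|-n/k\ge i-1$. Any perfect matching with $m_j$ edges of index $j$ satisfies $\sum_j m_j=n/k$ and $\sum_j (j-1)m_j=t$, and the hypothesis forces $m_j=0$ for every even $j<i$. I therefore target a matching with one $i$-edge, some odd-$j$ edges with $j\ge 3$ (each contributing an even value $j-1$ to the excess) and the bulk being $1$-edges, so that $(i-1)+\sum_{j\ge 3\text{ odd}}(j-1)m_j=t$.

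First I would choose $e_0$ so that its $i$ vertices in $A\cup B$ consume as many vertices of $B$ as possible, and then extend to a matching $M_0\cup\{e_0\}$ by further odd-index or $i$-edges so that every vertex of $B$ is covered and, setting $H':=H[V(H)\setminus V(M_0\cup\{e_0\})]$, the residual satisfies $|A\cup B|_{H'}=|C|_{H'}/(k-1)$. Feasibility follows from $|B|\le \alpha^2 n$ (Claim~\ref{clm:size}) and from $\delta_{k-1}(H)\ge n/k$: double-counting shows that through any fixed vertex there are many edges of each odd index, and the hypothesis gives many $i$-edges. When $t-(i-1)$ has the wrong parity for absorption by odd-index edges alone, a second $i$-edge (contributing the odd value $i-1$) corrects the parity.

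On $H'$ the size balance $|C'|=(k-1)|A'\cup B'|$ forces any perfect matching to consist entirely of 1-edges. I would partition $C'$ into $k-1$ equal parts, form the $k$-partite $k$-graph $H^*$ with first part $A'\cup B'$, and apply Theorem~\ref{thm:pik}. The hypothesis $\delta_{\{1\}}(H^*)m+\delta_{[k]\setminus\{1\}}(H^*)m^{k-1}\ge \tfrac{3}{2}m^k$ follows from the strong bound $\deg(x,C)\ge (1-\alpha)\binom{|C|}{k-1}$ built into~\eqref{eq:A} for every $x\in A$, together with $\delta_{k-1}(H)\ge n/k$ (which yields $\delta_{[k]\setminus\{1\}}(H^*)$ up to lower-order terms by a standard averaging over partition refinements of $C$). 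The $k$-partite perfect matching combined with $M_0\cup\{e_0\}$ is then the desired perfect matching of $H$.

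The main obstacle I anticipate is the combinatorial bookkeeping in Step~1: simultaneously covering all of $B$, correcting the parity of $t-(i-1)$, and tuning the count so that $|A'\cup B'|=|C'|/(k-1)$ with $(k-1)\mid |C'|$, all within $O(1)$ edges. Because $|B|\le \alpha^2 n$ is tiny and the codegree bound supplies abundant choices of edges of each index at each vertex, this should be executable by a short case analysis on the parities of $t$ and of $|A\cup B|$ modulo $k-1$, possibly swapping a single edge of $M_0$ to adjust the final divisibility. A secondary concern is that vertices of $B$ may have poor codegree into $C^{k-1}$, but this is precisely why $B$ must be consumed entirely in Step~1 before Pikhurko's theorem is invoked in Step~2.
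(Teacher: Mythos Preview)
Your overall architecture—one distinguished $i$-edge $e_0$, an auxiliary matching to absorb $B$ and balance the counts, then Pikhurko on the residual—is exactly the paper's, but the execution has two real gaps. The auxiliary matching is not constant-size: covering $B$ alone requires at least $|B|/k$ edges, and $|B|$ may be as large as $\alpha^2 n$ by Claim~\ref{clm:size}; independently, the balancing identity $\sum_j(j-1)m_j=t$ with your $t=|A\cup B|-n/k$ may have $t$ as large as $\epsilon(k-1)n/k$ by~\eqref{eq:ab}, and since each edge contributes at most $k-1$ to this sum, linearly many edges are again forced. The paper's auxiliary matchings $M_1,M_2,M_3$ accordingly have combined size up to $3\alpha^2 n$, not $O(1)$. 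Relatedly, covering each $v\in B$ by a $1$-edge is itself nontrivial (a $(k-1)$-set in $C$ might have all its neighbours in $A$); the paper needs either Fact~\ref{fact:BCC} or a K\"onig--Egerv\'ary argument, split according to whether some $2$-edge meets both $A$ and $B$.

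Second, your parity repair via a ``second $i$-edge'' is unavailable: the hypothesis supplies a single $i$-edge, not many. The paper's device is a reserve-and-swap. It tentatively places $e_0$ in $M_1$, but for each $x\in e_0\cap B$ it reserves a $(k-1)$-set $S_x\subseteq C$ with $\{x\}\cup S_x\in E(H)$, keeping the vertices of $S_x$ out of $M_2,M_3$. After building $M_3$ from $2$- and $3$-edges, the residual count $|A_3|-n_3/k$ lands in $\{0,-1\}$ (Case~1) or $\{0,1-i\}$ (Case~2); in the nonzero case the paper deletes $e_0$ and inserts the reserved $1$-edges $\{x\}\cup S_x$, which shifts the count by exactly the right amount. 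Your plan has no analogue of this swap, and without it the case where the parity of $t$ does not match that of $i-1$ is not handled.
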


\begin{lemma}\label{lem:exact}
Fix any even $0\le i\le k$. If $|A\cup B|=n/k+i$ and $H$ contains no $j$-edge for all even $0\le j\le i$, then $H$ contains a perfect matching.
\end{lemma}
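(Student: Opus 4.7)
My plan is to peel off a constant-sized ``reserve'' matching $M^\ast$ that simultaneously absorbs all of $B$ and the surplus $i=|A\cup B|-n/k$, and then produce a perfect $1$-edge matching on the remainder via Pikhurko's theorem (Theorem~\ref{thm:pik}).

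The base case $i=0$ is immediate: since $C$ is independent, $N_H(S)\subseteq A\cup B$ for every $(k-1)$-set $S\subseteq C$, and $\delta_{k-1}(H)\ge n/k=|A\cup B|$ forces $N_H(S)=A\cup B$. In particular every candidate $1$-edge is present and $B=\emptyset$, so a perfect matching is obtained by partitioning $C$ arbitrarily into $n/k$ groups of $k-1$ vertices and pairing each group with a distinct vertex of $A\cup B$.

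Now assume $i\ge 2$. I would first record three structural facts. (a) Double-counting pairs $(S,v)$ with $S\subseteq C$, $|S|=k-1$, $v\in (A\cup B)\setminus N(S)$, using $|(A\cup B)\setminus N(S)|\le i$ together with $\binom{|C|}{k-1}-\deg(v,C)>\a\binom{|C|}{k-1}$ for $v\in B$, yields $|B|\le i/\a$, a constant. (b) For each $v\in A\cup B$, any $(k-1)$-set $S\ni v$ with $|S\cap(A\cup B)|=1$ satisfies $N(S)\subseteq C$ (since $2$-edges are forbidden), and summing $\deg(S)\ge n/k$ over such $S$ gives $\deg_H(v,C)\ge (n/k)\binom{|C|}{k-2}/(k-1)=\Omega(n^{k-1})$, so every vertex of $A\cup B$ lies in many $1$-edges. (c) For $k\ge 4$ and any $3$-set $T\subseteq A$, extending to $S=T\cup S'$ with $S'\subseteq C$, $|S'|=k-4$, one has $N(S)\cap C\ne\emptyset$: if $i\ge 4$ this is because $4$-edges are forbidden so $N(S)\cap(A\cup B\setminus T)=\emptyset$, while if $i=2$ we have $|A\cup B\setminus T|=n/k-1<n/k\le|N(S)|$. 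Hence $\Omega(n)$ distinct $3$-edges pass through every such $T$. The case $k=3$ is analogous, using that the forbidden $2$-edges force every $3$-subset of $A\cup B$ to be an edge.

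Armed with these facts, I would greedily build $M^\ast$ consisting of $|B|$ disjoint $1$-edges (one covering each vertex of $B$) together with $i/2$ disjoint $3$-edges whose $(A\cup B)$-vertices all lie in $A$, pairwise vertex-disjoint; this is routine for $n$ large since $|B|,i=O_k(1)$. Set $H':=H-V(M^\ast)$, $n':=n-k(|B|+i/2)$, $m':=n'/k$; then $|A'\cup B'|=m'$, $B'=\emptyset$, and for each $(k-1)$-set $S\subseteq C'$ a defect bound $|A'\cup B'\setminus N_{H'}(S)|\le i$ holds (only $3i/2+|B|$ vertices of $A\cup B$ were removed). I then uniformly randomly partition $C'$ into $k-1$ classes $C'_1,\ldots,C'_{k-1}$ of size $m'$ and form the auxiliary $k$-partite $k$-graph $\tilde{H}$ on parts $A',C'_1,\ldots,C'_{k-1}$ whose edges are the $1$-edges of $H'$ respecting this partition. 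The defect bound yields $\delta_{[k]\setminus\{1\}}(\tilde{H})\ge m'-i$, and a standard concentration argument using $\deg_H(v,C)\ge (1-\a)\binom{|C|}{k-1}$ for $v\in A$ ensures with positive probability that $\deg_{\tilde{H}}(v)\ge (1-O(\a))(m')^{k-1}$ for every $v\in A'$ simultaneously. The Pikhurko hypothesis then reduces to $(1/2-O(\a))m'\ge i$, which holds for $n$ large; Theorem~\ref{thm:pik} supplies a perfect matching of $\tilde{H}$ -- i.e., a perfect $1$-edge matching of $H'$ -- whose union with $M^\ast$ is the desired perfect matching of $H$. The most delicate point is uniformly controlling $\delta_{\{1\}}(\tilde{H})$ across all $v\in A'$ after the random partition, which is precisely why eliminating $B$ beforehand is essential.
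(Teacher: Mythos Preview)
Your proof is correct and follows essentially the same route as the paper: cover $B$ with $1$-edges, absorb the surplus $i$, and finish with Theorem~\ref{thm:pik} on an equipartition of the remaining $C$-vertices. The paper's version is a bit leaner --- it absorbs the surplus with a single $(i+1)$-edge (whose existence follows directly from the absence of $i$-edges) rather than $i/2$ disjoint $3$-edges, and it bounds $\delta_{\{1\}}$ deterministically: since every $v\in A'$ has at most $\a\binom{|C|}{k-1}=O(\a)(m')^{k-1}$ non-neighbour $(k-1)$-sets in all of $C$, any equipartition of $C'$ already yields $\delta_{\{1\}}(\tilde H)\ge (1-O(\a))(m')^{k-1}$, so your random partition and concentration step are unnecessary.
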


\begin{lemma}\label{lem:last}
If $H$ contains no $j$-edge for all even $0\le j\le k$ and $H\notin \mathcal{H}_{n,k}$, then $H$ contains a perfect matching.
\end{lemma}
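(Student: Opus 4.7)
My plan is to use $H\notin\mathcal{H}_{n,k}$ to pin down the parity of $|A\cup B|$, and then either invoke Lemma \ref{lem:exact} directly or reduce the problem to an application of Pikhurko's theorem. Since every edge of $H$ meets $A\cup B$ in an odd number of vertices, the partition $(A\cup B, C)$ would witness $H\in\mathcal{H}_{n,k}$ as soon as $n/k-|A\cup B|$ is odd; the hypothesis therefore forces $n/k-|A\cup B|$ to be even, and combined with $|A\cup B|\ge n/k$ from Claim \ref{clm:size} we obtain $|A\cup B|=n/k+2m$ for some integer $m\ge 0$. If $2m\le k$, then Lemma \ref{lem:exact} applied with $i=2m$ immediately yields a perfect matching, since the absence of even $j$-edges for $j\le i$ follows from the hypothesis of Lemma \ref{lem:last}.

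Assume $2m>k$. I will extract two short auxiliary matchings. First, a matching $M_B$ of $|B|$ disjoint 1-edges covering all of $B$; then a matching $M_0$ of $m$ disjoint 3-edges in $H-V(M_B)$, each using three vertices of $A$ and $k-3$ vertices of $C\setminus V(M_B)$. Both matchings exist by a greedy argument from the codegree condition together with the absence of even $j$-edges: for $v\in B$, any $(k-1)$-set $\{v\}\cup S$ with $S\subseteq C$ of size $k-2$ has codegree $\ge n/k$, and all its neighbors must lie in $C$ (else a 2-edge would exist), so $v$ is incident to many 1-edges; analogously, any $(k-1)$-set with two vertices in $A\cup B$ and $k-3$ in $C$ has codegree $\ge n/k$ with all neighbors in $A\cup B$ (else a 2-edge or 4-edge appears), giving an abundance of 3-edges. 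After removal, the residual subhypergraph $H^\ast:=H-V(M_B\cup M_0)$ has its $A\cup B$-part entirely inside $A$, of size $m^\ast:=n/k-m-|B|$, and its $C$-part of size $(k-1)m^\ast$, so the counts are perfectly balanced for a $k$-partite matching.

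To complete the perfect matching I will partition the remaining $C$-part into $k-1$ equal classes $C_1,\ldots,C_{k-1}$ of size $m^\ast$ each by a random equipartition, and apply Pikhurko's theorem (Theorem \ref{thm:pik}) to the resulting $k$-partite $k$-graph $G$ with parts $A\cap V(H^\ast), C_1,\ldots,C_{k-1}$. The bound $\deg(v,C)\ge (1-\alpha)\binom{|C|}{k-1}$ for $v\in A$, combined with a standard concentration argument for the random partition, yields $\delta_{\{1\}}(G)\ge (1-2\alpha)(m^\ast)^{k-1}$; meanwhile $\delta_{k-1}(H)\ge n/k$ together with the independence of $C$ (so that every neighbor of any $(k-1)$-tuple from $C$ lies in $A\cup B$) gives $\delta_{[k]\setminus\{1\}}(G)\ge n/k-|B|-3m\ge m^\ast-2m$. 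These estimates give $\delta_{\{1\}}(G)\,m^\ast+\delta_{[k]\setminus\{1\}}(G)(m^\ast)^{k-1}\ge (2-2\alpha)(m^\ast)^k - 2m(m^\ast)^{k-1}\ge \tfrac32(m^\ast)^k$ using $m\ll m^\ast$ and $\alpha\ll 1/k$, so Theorem \ref{thm:pik} delivers a perfect 1-edge matching $M^\ast$ of $H^\ast$, and $M_B\cup M_0\cup M^\ast$ is the desired perfect matching of $H$.

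The main obstacle will be achieving uniform concentration for $\delta_{\{1\}}(G)$ over all $v\in A\cap V(H^\ast)$ under the random equipartition: for each fixed $v$ the expected number of non-neighbor tuples in $C_1\times\cdots\times C_{k-1}$ is about $\alpha(m^\ast)^{k-1}$ by averaging, but a union bound over $|A|=\Theta(n)$ vertices calls for a concentration inequality (Azuma's inequality or the method of bounded differences) to avoid losing a factor of $n$. The slack provided by $\alpha\ll 1/k$ should leave comfortable room, but this calibration — and the subsequent bookkeeping to confirm the Pikhurko inequality with the stated constants — is the delicate technical step of the argument.
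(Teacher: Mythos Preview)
Your strategy coincides with the paper's: deduce that $|A\cup B|-n/k$ is even from $H\notin\mathcal{H}_{n,k}$, cover $B$ by $|B|$ disjoint $1$-edges, balance the remainder with $s/2$ disjoint $3$-edges (where $s=|A\cup B|-n/k$), and finish via Theorem~\ref{thm:pik}. Two simplifications are worth noting. First, the case split $2m\le k$ (delegated to Lemma~\ref{lem:exact}) versus $2m>k$ is unnecessary; the paper handles all $m$ uniformly since only $s\le\e n$ is needed for the greedy $3$-edge step. Second, the ``main obstacle'' you anticipate---concentration under a random equipartition of the surviving $C$-vertices---does not arise: an \emph{arbitrary} equipartition already works. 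For any $v\in A$, the defining inequality~\eqref{eq:A} bounds the number of unordered non-neighbour $(k-1)$-sets in $C$ by $\a\binom{|C|}{k-1}\le \a\,c_k (m^\ast)^{k-1}$ with $c_k=k^{k-1}/(k-1)!$, and every transversal of $C^1\times\cdots\times C^{k-1}$ is in particular such a $(k-1)$-set, so this deterministically yields $\delta_{\{1\}}(H')\ge(1-\a c_k)(m^\ast)^{k-1}$. No randomness, Azuma, or union bound is required, and the Pikhurko inequality follows directly.
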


We postpone the proofs of these lemmas to the following subsections and prove Theorem \ref{thm3} first.

\begin{proof}[Proof of Theorem \ref{thm3}]
The proof of Theorem \ref{thm3} runs in an algorithmic way as follows. The case when $|A\cup B|=n/k$ is covered by Lemma \ref{lem:exact} with $i=0$. Next by Lemma \ref{lem:main}, if $|A\cup B|\ge n/k+1$ and there is a 2-edge in $H$, then $H$ contains a perfect matching. So we may assume that $H$ contains no 2-edge. Consider any $(k-1)$-set $S$ with $|S\cap (A\cup B)|=2$, since there is no 2-edge, we get $N(S)\subseteq A\cup B$ and thus $|A\cup B|\ge n/k+2$. By Lemma \ref{lem:exact} again, if $|A\cup B|=n/k+2$ and $H$ contains no 2-edge, then $H$ contains a perfect matching. So we can assume that $|A\cup B|\ge n/k+3$ and $H$ contains no 2-edge. If $H$ contains one 4-edge, then by Lemma \ref{lem:main}, $H$ has a perfect matching. After $\lfloor k/2\rfloor$ iterations, we can assume that $H$ contains no $j$-edge for all even $0\le j\le k$. In this case, by Lemma \ref{lem:last}, we find a perfect matching provided that $H\notin \mathcal{H}_{n,k}$.
\end{proof}

\subsection{Proof of Lemma \ref{lem:main}}

Fix any even $2\le i\le k$. Assume that $|A\cup B|\ge n/k+i-1$ and $H$ contains no $j$-edge for all even $0\le j\le i-2$. Assume that $H$ contains an $i$-edge. 

Let us first outline our proof. Our main goal is to remove a small matching $M$ that covers every vertex in $B$ such that the sets of remaining vertices $A\setminus V(M)$ and $C\setminus V(M)$ satisfy $|C\setminus V(M)|=(k-1)|A\setminus V(M)|$. Then we partition $C\setminus V(M)$ into $k-1$ parts and apply Theorem \ref{thm:pik} and get a perfect matching on $V(H)\setminus V(M)$. So we get a perfect matching of $H$.

Roughly speaking, since $|\calP|=2$, the `divisibility' is reduced to `parity', which means that if we need to `repair' the divisibility, one edge is enough. An $i$-edge $e_0$ will be such edge for repairing -- we will add $e_0$ to our matching at the very beginning of our proof. But the divisibility barrier may not appear, in which case, choosing $e_0$ makes the parity bad. However, we cannot foresee this at the beginning. So at some intermediate step, if we find out that we made the wrong decision, we just free $e_0$ from our partial matching and the parity will be good again (in this case, the parity was good at the beginning).

Now we start our proof. We separate two cases.

\medskip
\noindent {\bf Case 1. }$i=2$ and there is a 2-edge $e_0$ such that $|e_0 \cap A|=|e_0 \cap B|=1$.
\medskip

Let $x=e_0\cap B$. Since $C$ is a maximum independent set, there exists a $(k-1)$-set $S_x\subseteq C$ such that $e_x:=\{x\}\cup S_x\in E(H)$. Note that $S_x\setminus e_0$ may intersect $e_0\cap C$. We reserve $S_x$ for future use, which means, we will not use its vertices later until the very last step.

We will build four disjoint matchings $M_1$, $M_2$, $M_3$, and $M_4$ in $H$, whose union gives the desired perfect matching in $H$. For $i\in [3]$, let $A_i=A\setminus V(\cup_{j\in [i]} M_j)$ and $C_i=C\setminus V(\cup_{j\in [i]} M_j)$ be the sets of uncovered vertices of $A$ and $C$, respectively. Let $n_i=|V(H)\setminus V(\cup_{j\in [i]} M_j)|$.

\bigskip
\noindent \emph{Step 1. Small matchings $M_1$ and $M_2$ covering $B$.}

Let $t:= n/k - |A|$. We let $M_1=\{e_0\}$ if $t\le 0$. Otherwise, we build the first matching $M_1$ of size $t+1$ as follows. By Claim \ref{clm:size}, we know that $t=n/k - |A|\le \a^2 n$. 
By $\delta_{k-1}(H)\ge n/k$ and the definition of $t$, we have $\delta_{k-1}(H[B\cup C])\ge t$. Since $|C|\le \frac {(k-1)n}k-1$, we have $|B|= n - |C| - |A|\ge n/k - |A|+1=t+1$. 

We claim that we can find a matching of $t$ 1-edges in $(B\cup C)\setminus (e_0\cup S_x)$. Let $M_1$ be the union of these edges and $e_0$.
Indeed, we pick $t$ arbitrary disjoint $(k-1)$-sets $S_1,\dots, S_{t}$ from $C\setminus (e_0\cup S_x)$. Since $C$ is an independent set, each of $S_i$ has at least $t-1$ neighbors in $B\setminus x$ for $i\in [t]$. 
Consider the bipartite graph between $B\setminus x$ and $\{S_1,\dots, S_{t}\}$, in which we put an edge if $\{v\}\cup S_i\in E(H)$ for $v\in B\setminus x$ and $i\in [t]$.
By the K\"onig-Egervary Theorem, either we have a matching of size $t$ (then we are done), or there is a vertex cover of order $t-1$. Since the degree of any $S_1,\dots, S_t$ is at least $t-1$ in the auxiliary bipartite graph, the vertex cover must be in $B\setminus x$, denoted by $B'$ (of order $t-1$), 
and every vertex in $B'$ is adjacent to all $S_i$ for $i\in [t]$.
Now consider $(k-1)$-sets in $C\setminus (\bigcup_{i\in [t]}S_i \cup e_0\cup S_x)$. If our claim does not hold, namely, there is no $t$ disjoint 1-edges, then all these $(k-1)$-sets are adjacent to all vertices in $B'$. 
Note that $|C\setminus (\bigcup_{i\in [t]}S_i \cup e_0\cup S_x)|\ge |C|-(k-1)t -2k\ge (1-2k\a^2)|C|$, because $t\le \a^2 n\le 2\a^2 |C|$.
So for any $v\in B'$, we have
\[
\deg(v, C)\ge \binom{(1-2k\a^2)|C|}{k-1} \ge ((1-2k\a^2)^{k-1}-o(1))\binom{|C|}{k-1} > (1-\a)\binom{|C|}{k-1},
\]
as $\a$ is small enough. This contradicts the fact that $v\notin A$. So the claim holds.

Next we build the second matching $M_2$ that covers all vertices in $B\setminus V(M_1)$. 
For each $v\in B\setminus V(M_1)$, we pick $k-2$ arbitrary vertices from $C\setminus S_x$ not covered by the existing matching, and an uncovered vertex in $V$ to complete an edge and add it to $M_2$. Since $\delta_{k-1}(H)\ge n/k$ and the number of vertices covered by the existing matching is at most $k |B|\le k\a^2 n<\delta_{k-1}(H)$, such edge always exists.

\medskip
Our construction guarantees that each edge in $M_1\cup M_2$ contains at least one vertex from $B$ and thus $|M_1\cup M_2|\le |B|$. 
We claim that $|A_1|\ge n_1/k$ and $|A_2|\ge n_2/k$.
To see the bound for $|A_1|$, we separate two cases depending on $t$. When $t>0$, by the definition of $M_1$, we have
\[
|A_1| = \frac{n}{k} - t -1 = \frac{n - k|M_1|}{k}=\frac{n_1}{k}.
\]
Otherwise $t\le 0$, we have $n_1=n-k$ and $|A_1|=|A|-1\ge n/k-1= n_1/k$. 
For the bound for $|A_2|$, since each edge of $M_2$ contains at most one vertex of $A$, we have
\[
|A_2| \ge |A_1| - |M_2| \ge \frac{n_1}{k}  - |M_2| = \frac{n_2}{k}.
\]

Let $s:=|A_2| - n_2/k\ge 0$. Since $n_2=n - k|M_1\cup M_2|\ge n - k|B|\ge n - k\a^2 n$ and $|C|\ge (1-{\e})\frac {(k-1)n}k$ (Claim \ref{clm:size}), we get
\[
s\le n - |C| - \frac{n-k\a^2 n}{k}\le \e\frac {(k-1)n}k + \a^2 n \le 2\a^2 n.
\]

\bigskip
\noindent \emph{Step 2. A small matching $M_3$.}

We will construct a matching $M_3$ of size at most $2\a^2 n$ on $A_2\cup (C_2\setminus S_x)$ such that $|A_3| - {n_3}/{k}\in \{0, -1\}$. 
To see that this is possible, at some intermediate step, denote by $n'$ as the number of uncovered vertices of $H$ and denote by $A', C'$ as the sets of uncovered vertices in $A, C\setminus S_x$, respectively. Let $c=|A'| - {n'}/{k}$.
If $c> 0$, then we arbitrarily pick two vertices from $A'$, $k-3$ vertices from $C'$ and one vertex from $A'\cup C'$ to form an edge. Note that we pick a 2-edge or a 3-edge in each step.
As a result, $c$ decreases by $1$ or $2$. 
The iteration stops when $c$ becomes 0 or $-1$ after at most $s\le 2\a^2 n$ steps.
Note that we can always form an edge in each step because the number of covered vertices is at most $k|B|+k\cdot 2\a^2 n\le 3k\a^2 n<\delta_{k-1}(H)$. So we get a matching $M_3$ of at most $2\a^2 n$ edges.

\bigskip
\noindent \emph{Step 3. The last matching $M_4$.}

Now we have two cases, $|A_3| - {n_3}/{k}= -1$ or 0. In the former case, we delete the edge $e_0$ from $M_1$ and add $e_x$ to $M_1$. Note that this is possible because $S_x\subseteq C_3$. Let the resulting sets of uncovered vertices be $A_3', C_3'$, respectively. Also let $n_3':=|A_3'|+|C_3'|=n_3$. So $|A_3'|=|A_3|+1$ and we have $|A_3'| - {n_3'}/{k}= 0$, that is, $|C_3'| = (k-1)|A_3'|$. In the latter case we let $A_3'=A_3$ and $C_3'=C_3$. We have $|C_3'| = (k-1)|A_3'|$ immediately.
By definition, we have
\[
|A_3'|\ge |A| - |M_1\cup M_2| - 3|M_3|\ge n/k - \a^2 n -\a^2 n - 6\a^2 n \ge n/k - 8\a^2 n,
\]
as $|M_1\cup M_2|\le |B|\le \a^2 n$ and $|M_3|\le 2\a^2 n$.

Let $m:=|A_3'|$. Next, we partition $C_3'$ arbitrarily into $k-1$ parts $C^1, C^2,\dots, C^{k-1}$ of the same size $m$. We want to apply Theorem \ref{thm:pik} on the $k$-partite $k$-graph $H':=H[A_3', C^1,\dots, C^{k-1}]$. Let us verify the assumptions. 
First, since $C_3'$ is independent, for any set of $k-1$ vertices $v_1,\dots, v_{k-1}$ such that $v_i\in C^i$ for $i\in [k-1]$, the number of its non-neighbors in $A\cup B$ is at most
\[
|A| + |B| - {n}/{k} \le n/k + \e \frac {(k-1)n}k - n/k \le k\e m,
\]
where we use \eqref{eq:ab} in the first inequality and the last inequality follows from $m=|A_3'|\ge n/k - 8\a^2 n>\frac{k-1}{k^2}n$.
So we have $\delta_{[k]\setminus\{1\}}(H')\ge m - k\e m= (1-k\e)m$.
Next, by \eqref{eq:A}, for any $v\in A_3'$, we have
\[
\overline{\deg}_{H}(v, C)\le \a \binom{|C|}{k-1}\le \a \frac{|C|^{k-1}}{(k-1)!}\le \a \frac{\left(\frac{k-1}{k}n \right)^{k-1}}{(k-1)!}\le \a \frac{(km)^{k-1}}{(k-1)!}= \a c_k m^{k-1},
\]
where $c_k=\frac{k^{k-1}}{(k-1)!}$.
This implies that $\delta_{\{1\}}(H') \ge (1-\a c_k) m^{k-1}$. Thus, we have
\[
\delta_{\{1\}}(H') m + \delta_{[k]\setminus\{1\}}(H') m^{k-1} \ge (1- \a c_k) m^{k-1} m + (1-k \e)m m^{k-1}>\frac32 m^k,
\]
as $\e$ is small enough. By Theorem \ref{thm:pik}, we find a perfect matching in $H'$, which gives the perfect matching $M_4$ on $A_3'\cup C_3'$. So $M_1\cup M_2\cup M_3\cup M_4$ gives a perfect matching of $H$.

\bigskip
\noindent {\bf Case 2. }$i=2$, there is a 2-edge $e_0$ and there is no 2-edge $e$ such that $|e \cap A|=|e \cap B|=1$; or $i$ is even with $4\le i\le k$ and there is an $i$-edge $e_0$.
\medskip

We first observe the following fact.

\begin{fact}\label{fact:BCC}
Assume that $H$ contains no 2-edge $e$ such that $|e\cap A|=|e\cap B|=1$, then for any $(k-1)$-tuple $S$ with $|S\cap B|=1$ and $|S\cap C|=k-2$, we have $\deg(S, C)\ge n/k-\a^2 n$.
\end{fact}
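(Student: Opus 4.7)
My plan is to combine the minimum codegree hypothesis with the forbidden-edge assumption and the size bound for $B$ from Claim \ref{clm:size}. Fix a $(k-1)$-set $S$ with $|S\cap B|=1$ and $|S\cap C|=k-2$. By $\delta_{k-1}(H)\ge n/k$, the neighborhood satisfies $|N(S)|\ge n/k$. Since $V(H)=A\cup B\cup C$, I will split $N(S)$ into the three pieces $N(S)\cap A$, $N(S)\cap B$, $N(S)\cap C$, and bound the first two from above.

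For $N(S)\cap A$: any $v\in A$ lying in $N(S)$ would yield an edge $e=S\cup\{v\}$ with exactly one vertex in $A$ (namely $v$) and exactly one in $B$ (the unique element of $S\cap B$), i.e.\ a $2$-edge of the form excluded by the case hypothesis. Hence $N(S)\cap A=\emptyset$. For $N(S)\cap B$: trivially $|N(S)\cap B|\le |B|\le \a^2 n$ by Claim \ref{clm:size}. Combining,
\[
\deg(S,C) \;=\; |N(S)| - |N(S)\cap A| - |N(S)\cap B| \;\ge\; n/k - \a^2 n,
\]
which is the required bound. There is no genuine obstacle: the only ingredients are the codegree hypothesis, the forbidden-edge-type assumption defining this case, and the previously established bound $|B|\le \a^2 n$.
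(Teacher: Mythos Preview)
Your proof is correct and follows exactly the same approach as the paper: observe that $N(S)\cap A=\emptyset$ (equivalently $N(S)\subseteq B\cup C$) because any neighbor in $A$ would create a forbidden $2$-edge, then subtract off $|B|\le \a^2 n$ from the codegree bound. The paper's argument is identical, only more terse.
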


\begin{proof}
Since there is no such 2-edge, $N(S)\subseteq B\cup C$. By the minimum degree condition and $|B|\le \a^2 n$ by Claim \ref{clm:size}, we have $\deg(S, C)\ge n/k-\a^2 n$.
\end{proof}

Note that Fact \ref{fact:BCC} works under either assumption in this case. This simplifies Step 1 -- we only need to build one matching. But to be consistent with Case 1, we set $M_2=\emptyset$ in this case.

\bigskip
\noindent \emph{Step 1. A small matching $M_1$ covering $B$.}

We build $M_1$ as follows. First we add the $i$-edge $e_0$ to $M_1$. By Fact \ref{fact:BCC} and $|B|\le \a^2 n$, we greedily pick a matching $M'$ of $|B|$ 1-edges from $B\cup (C\setminus e_0)$.
Assume that $|e_0\cap B|=j\le i$. If $j>0$, denote the vertices by $x_1, \dots, x_j\in e_0\cap B$ and let $S_{x_1},\dots, S_{x_j}$ be the $(k-1)$-sets in $C$ that form edges $e_{x_1},\dots, e_{x_j}$ with $x_1,\dots, x_j$ in the matching $M'$, respectively. As in Case 1, we will reserve $S_{x_1}\setminus e_0, \dots, S_{x_j}\setminus e_0$ for future use.
If $j=0$, we add all edges of $M'$ to $M_1$. Otherwise, we add the $|B|-j$ edges of $M'$ that do not contain $x_1,\dots, x_j$ to $M_1$. So we have $|M_1|= |B|+1-j$.

We claim that $|A_1|\ge n_1/k$.
Recall that
\[
|A\cup B|\ge n/k+i-1=n_1/k+|M_1|+i-1=n_1/k+|B|+i-j.
\]
Since $|e_0\cap A|=i-j$, we have,
\[
|A_1| = |A|-(i-j) = |A\cup B| - |B|-(i-j) \ge n_1/k.
\]
Since $M_2=\emptyset$, we have $|A_2|\ge n_2/k$.

So $s:=|A_2| - n_2/k\ge 0$ and as in the previous case, $s\le 2\a^2 n$.

\bigskip
\noindent \emph{Step 2. A small matching $M_3$.}

We will construct a matching $M_3$ of 2-edges and 3-edges with size at most $2\a^2 n$ on $A_2\cup (C_2\setminus (S_{x_1}\cup\cdots\cup S_{x_j}))$ such that $|A_3| - {n_3}/{k}\in \{0, 1-i\}$. 
Similar as in Case 1, if we add a 2-edge (or a 3-edge) to $M_3$, then the value of $c$ decreases by 1 (or 2), respectively.
So if there is one 2-edge, we can construct $M_3$ of size at most $s$ such that $|A_3| - {n_3}/{k}=0$ (we can choose to include or exclude this 2-edge in $M_3$).
So if we cannot have $|A_3| - {n_3}/{k}=0$, then there is no 2-edge in $H[A_2\cup (C_2\setminus (S_{x_1}\cup\cdots\cup S_{x_j}))]$ and $s$ is odd.
In this case we add $(s+i-1)/2$ disjoint 3-edges to $M_3$ and therefore $|A_3| - {n_3}/{k}=1-i$.
Note that we always can form 2-edges or 3-edges similarly as in Case 1.
So we get a matching $M_3$ of at most $s\le 2\a^2 n$ edges.

\bigskip
\noindent \emph{Step 3. The last matching $M_4$.}

Now we have two cases, $|A_3| - {n_3}/{k}= 1-i$ or 0. In the former case, we delete the $i$-edge $e_0$ from $M_1$ and add the edges $e_{x_1},\dots, e_{x_j}$ to $M_1$ (if $j>0$). Let the resulting sets of uncovered vertices be $A_3', C_3'$, respectively. Also let $n_3':=|A_3'|+|C_3'|=n_3+k-j k$. So $|A_3'|=|A_3|+i-j$ and we have $|A_3'| - {n_3'}/{k}= 0$, namely, $|C_3'| = (k-1)|A_3'|$. In the latter case we let $A_3'=A_3$ and $C_3'=C_3$. We have $|C_3'| = (k-1)|A_3'|$ immediately.
By definition, we have
\[
|A_3'|\ge |A| - |M_1| - 3|M_3|\ge n/k - \a^2 n - (\a^2 n+1) - 6\a^2 n \ge n/k - 9\a^2 n,
\]
as $|M_1|\le |B|+1\le \a^2 n+1$ and $|M_3|\le 2\a^2 n$.

Let $m:=|A_3'|$. We partition $C_3'$ arbitrarily into $k-1$ parts $C^1, C^2,\dots, C^{k-1}$ of the same size $m$. We apply Theorem \ref{thm:pik} on the $k$-partite $k$-graph $H':=H[A_3', C^1,\dots, C^{k-1}]$ and get a perfect matching in $H'$, which gives the perfect matching $M_4$ on $A_3'\cup C_3'$. So $M_1\cup M_2\cup M_3\cup M_4$ gives a perfect matching of $H$. We omit the similar calculations.

\subsection{Proofs of Lemma \ref{lem:exact} and Lemma \ref{lem:last}}

\begin{proof}[Proof of Lemma \ref{lem:exact}]
Fix any even $0\le i\le k$. Assume that $|A\cup B|=n/k+i$ and $H$ contains no $j$-edge for all even $0\le j\le i$. If $i=0$, then we have $|A\cup B|=n/k$ and $|C|=\frac{k-1}{k}n$. By the minimum degree condition, every $k$-set containing exactly $k-1$ vertices in $C$ is an edge of $H$. Thus, we partition $V(H)$ into $n/k$ such $k$-sets and get a perfect matching of $H$.
So we may assume $i\ge 2$. 

Since there is no $i$-edge, we can take an $(i+1)$-edge $e_0$ such that $|e_0\cap A|=i+1$. Indeed, we take $i$ vertices from $A$ and $k-i-1$ vertices from $C$ and another vertex to form an edge. Since $H$ contains no $i$-edge and $|B|\le \a^2 n<n/k$, we can pick the last vertex from $A$ and get the desired $(i+1)$-edge $e_0$.

Next by Fact \ref{fact:BCC}, we find a matching of $|B|$ 1-edges that covers all vertices of $B$.
Let $A'$ and $C'$ be the set of uncovered vertices of $A$ and $C$, respectively. Note that we have $|A'|=n/k+i-|B|-(i+1)=n/k-|B|-1$ and
\[
|C'|= \frac{k-1}{k}n-i - (k-i-1) - (k-1)|B|=(k-1)|A'|.
\]
So as in the previous proofs, we partition $C'$ arbitrarily into $k-1$ parts, apply Theorem \ref{thm:pik} and get a perfect matching on $A'\cup C'$. Thus, we get a perfect matching of $H$.
\end{proof}

\begin{proof}[Proof of Lemma \ref{lem:last}]
Assume that $H$ contains no $j$-edge for all even $0\le j\le k$ and $H\notin \mathcal{H}_{n,k}$.
Since there is no 2-edge, by Fact \ref{fact:BCC}, we find a matching $M_1$ of $|B|$ 1-edges that covers all vertices of $B$. Let $C'$ be the set of uncovered vertices of $C$. 
Let $n'=|A|+|C'|$ and note that $n'/k=n/k-|B|$. 
Let
\[
s:=|A| - n'/k=|A|+|B|-n/k =|A\cup B| - n/k. 
\]
So $0\le s\le \e n$ by \eqref{eq:ab}. Moreover, we claim that $s$ is even. Indeed, since all edges of $H$ intersect $A\cup B$ in an odd number of vertices, if $s$ is odd, then $H\in \mathcal{H}_{n,k}$, a contradiction.
We greedily pick a matching $M_2$ of $s/2$ disjoint 3-edges, which is possible because $s\le \e n$ and $\delta_{k-1}(H)\ge n/k$. Let $A_2$ and $C_2$ be the set of vertices not covered by $M_1\cup M_2$. As in the previous proofs, we have $|C_2|=(k-1)|A_2|$. We partition $C_2$ arbitrarily into $k-1$ parts, apply Theorem \ref{thm:pik} and get a perfect matching $M_3$ on $A_2\cup C_2$. So we get a perfect matching $M_1\cup M_2\cup M_3$ of $H$.
\end{proof}

\section{Algorithms and the proof of Theorem \ref{thm:main}}

\subsection{A straightforward but slower algorithm}
Let $L_{odd}$ be the lattice generated by all two dimensional $k$-vectors with first coordinate odd, that is, $(1,k-1), (3, k-3),\dots, (k-1,1)$ if $k$ is even, and $(1,k-1), (3, k-3),\dots, (k,0)$ if $k$ is odd. It is easy to see that $L_{odd}$ is full. To check if a $k$-graph $H\in \mathcal{H}_{n,k}$, we find the bipartitions $\calP$ of $V(H)$ such that $\bfi_{\calP}(e)\in L_{odd}$ for every $e\in H$.
We use the algorithm Procedure ListPartitions in \cite{KKM13}. The following lemma \cite[Lemma 2.2]{KKM13} estimates the computation complexity of Procedure ListPartitions (although \cite[Lemma 2.2]{KKM13} was proved under the codegree condition $\delta_{k-1}(H)\ge n/k+\r n$, we can weaken the codegree condition as explained in \cite[Remark 2.3]{KKM13}).

\begin{lemma}\cite{KKM13}\label{lem22KKM}
Suppose $H$ is an $n$-vertex $k$-graph with $\delta_{k-1}(H)\ge n/k-2k(k-2)$. For any $d\in [k]$ and full edge-lattice $L\subseteq \mathbb{Z}^d$, there are at most $d^{2k-1}$ partitions $\calP$ of $V(H)$ such that $\bfi_{\calP}(e)\in L$ for every $e\in H$, and Procedure ListPartitions lists them in time $O(n^{k+1})$.
\end{lemma}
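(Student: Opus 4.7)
The plan exploits two consequences of the hypothesis that $L$ is full: fullness guarantees that for every $(k-1)$-vector $\mathbf{w}\in\mathbb Z^d$ there is at least one $i\in[d]$ with $\mathbf{w}+\bfu_i\in L$, while transferral-freeness ($\bfu_i-\bfu_j\notin L$ for $i\neq j$) forces this $i$ to be unique. Hence, once the parts of any $k-1$ vertices of some edge are fixed, the part of the remaining vertex is determined. This rigidity is what both caps the number of admissible partitions and drives an enumeration algorithm.

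First I would select an anchor set $A_0\subseteq V(H)$ of size $2k-1$, organised as an edge $e_0\in E(H)$ together with $k-1$ carefully chosen extra vertices $y_1,\dots,y_{k-1}$. The enumeration runs over every one of the at most $d^{2k-1}$ assignments of parts to $A_0$. For each assignment whose restriction to $e_0$ already produces an index vector in $L$, extend it to a candidate partition of the whole vertex set by propagation: process undecided vertices $v$ one at a time; using the codegree hypothesis, locate a $(k-1)$-subset $S$ of already-decided vertices with $S\cup\{v\}\in E(H)$; then set the part of $v$ to the unique $i\in[d]$ for which $\bfi_{\calP}(S)+\bfu_i\in L$. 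Finally, accept the candidate iff it assigns index vectors in $L$ to every edge of $H$.

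The main technical point is that the propagation never gets stuck, which is a counting argument: once the decided set has grown past the anchor, the bound $\delta_{k-1}(H)\ge n/k-2k(k-2)$ is large enough to ensure that $v$ sees a decided $(k-1)$-subset as a codegree neighbour, the slack $2k(k-2)$ being exactly what absorbs the loss from the bounded number of still-undecided vertices in the early steps. Correctness follows because any admissible partition $\calP$ restricts to $A_0$ as one of the enumerated assignments, and the forced nature of the propagation means that the extension it produces must coincide with $\calP$; so every admissible partition appears in the list. The running time is $O(n^{k+1})$: for each of the $d^{2k-1}=O(1)$ anchor guesses one performs $O(n)$ propagation steps, each searching for a usable $(k-1)$-set among $O(n^{k-1})$ candidates, followed by an $O(n^k)$ consistency sweep over all edges of $H$.

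The chief obstacle I expect is engineering the anchor $A_0$ and the propagation order so that the codegree condition is genuinely sufficient at every stage; one must guarantee that the $k-1$ auxiliary vertices $y_j$ can be chosen to codegree-dominate both each other and every remaining vertex. The exact additive slack $2k(k-2)$ in the codegree hypothesis was tailored to make this bookkeeping work for arbitrary $d\le k$, and the bulk of the work will be verifying this bookkeeping rather than invoking any deeper extremal idea.
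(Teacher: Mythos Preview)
The paper does not contain a proof of this lemma: it is quoted from \cite{KKM13} (specifically \cite[Lemma~2.2]{KKM13}, with the relaxed codegree condition justified by \cite[Remark~2.3]{KKM13}), so there is no in-paper argument to compare your proposal against.

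That said, your sketch is the standard strategy and matches the approach of \cite{KKM13}: use fullness plus transferral-freeness to get the ``$k-1$ decided vertices of an edge force the $k$th'' rigidity, enumerate the $d^{2k-1}$ colourings of a $(2k-1)$-vertex anchor, and propagate. Your closing caveat is accurate --- the only real content is the bookkeeping showing that from a well-chosen anchor the codegree bound $\delta_{k-1}(H)\ge n/k-2k(k-2)$ lets propagation reach every vertex, and the slack $2k(k-2)$ is sized precisely for this. As written, your propagation step (``locate a $(k-1)$-subset $S$ of already-decided vertices with $S\cup\{v\}\in E(H)$'') is not yet justified: the codegree hypothesis bounds $|N(S)|$ from below, not the number of edges through $v$ meeting the decided set, so you will need to argue more carefully---e.g., by first growing the decided set via neighbourhoods of several $(k-1)$-subsets of the anchor until it is large enough that the codegree bound forces every remaining vertex to be reachable. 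This is exactly the engineering you flag as the chief obstacle, and it is where the cited proof in \cite{KKM13} does its work.
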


By Theorem \ref{thm:PM}, the straightforward way to determine the existence of a perfect matching is to check if $(\calP_0', L_{\calP_0'}^{\mu}(H))$ is soluble and if $H\notin \mathcal{H}_{n,k}$.

\begin{theorem}\label{thm:slower}
Fix $k\ge 3$. Let $H$ be an $n$-vertex $k$-graph with $\delta_{k-1}(H)\ge n/k$. Then there is an algorithm with running time $O(n^{2^{k-1}k+1})$, which determines whether $H$ contains a perfect matching.
\end{theorem}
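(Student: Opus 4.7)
The plan is to assemble the algorithm directly from Theorem~\ref{thm:PM}: first compute the partitions furnished by Lemma~\ref{lem:PL}, then decide independently the two criteria characterising existence of a perfect matching, namely solubility of $(\calP_0', L_{\calP_0'}^{\mu}(H))$ and non-membership in $\mathcal{H}_{n,k}$.

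First, I apply Lemma~\ref{lem:PL} to $H$ (whose hypothesis $\delta_{k-1}(H)\ge n/k-\r n$ is implied by $\delta_{k-1}(H)\ge n/k$) to produce $\calP_0$, $\calP_0'$ and the full pair $(\calP_0', L_{\calP_0'}^{\mu}(H))$ in time $O(n^{2^{k-1}k+1})$; the lattice is stored via its finite generating set $I_{\calP_0'}^{\mu}(H)$, obtained by a single pass through $E(H)$. Since $d':=|\calP_0'|\le k$ is a constant, both $\mathbb{Z}^{d'}$ and the coset group $G(\calP_0', L_{\calP_0'}^{\mu}(H))$ have bounded size, so lattice-membership queries take constant time.

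Next, I test solubility of $(\calP_0', L_{\calP_0'}^{\mu}(H))$ by brute-force enumeration: examine every collection of at most $d'-1\le k-1$ pairwise disjoint edges of $H$ (including the empty matching) as a candidate matching $M$, and for each check whether $\bfi_{\calP_0'}(V(H)\setminus V(M))\in L_{\calP_0'}^{\mu}(H)$. This runs in time $O(n^{k(k-1)})$. Separately, to decide whether $H\in\mathcal{H}_{n,k}$, I invoke Procedure ListPartitions with the full two-dimensional lattice $L_{odd}$ defined at the start of this section: by Lemma~\ref{lem22KKM} it produces in time $O(n^{k+1})$ all bipartitions $\calP=(X,Y)$ of $V(H)$ such that $\bfi_{\calP}(e)\in L_{odd}$ for every edge $e$, i.e.\ those under which every edge meets $X$ in an odd number of vertices; then $H\in\mathcal{H}_{n,k}$ iff one of these bipartitions additionally satisfies that $n/k-|X|$ is odd, a constant-time check per candidate.

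By Theorem~\ref{thm:PM}, $H$ has a perfect matching iff solubility holds and $H\notin\mathcal{H}_{n,k}$, so the algorithm is correct. Summing the three costs $O(n^{2^{k-1}k+1}) + O(n^{k(k-1)}) + O(n^{k+1})$ and noting $2^{k-1}k+1\ge k(k-1)$ for every $k\ge 3$, the total running time is $O(n^{2^{k-1}k+1})$. The main point is not correctness—Theorem~\ref{thm:PM} does the real work—but rather that the $O(n^{2^{k-1}k+1})$ cost of Lemma~\ref{lem:PL} dominates; bringing the overall exponent down to $3k^2-5k$ as in Theorem~\ref{thm:main} is what will require combining Theorem~\ref{thm:PM} with the more careful ideas from \cite{KKM13}.
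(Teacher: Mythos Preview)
Your proposal is correct and follows essentially the same approach as the paper: compute $\calP_0,\calP_0'$ via Lemma~\ref{lem:PL}, test solubility by enumerating matchings of size at most $k-1$, and test membership in $\mathcal{H}_{n,k}$ via Procedure ListPartitions with $L_{odd}$, then invoke Theorem~\ref{thm:PM}. The only minor omission is that you do not explicitly dispose of the finitely many cases $n<n_0$ (where Theorem~\ref{thm:PM} does not apply), but this is trivial.
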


\begin{proof}
Let $H$ be an $n$-vertex $k$-graph with $\delta_{k-1}(H)\ge n/k$. 
Note that it is trivial to determine the existence of a perfect matching if $n<n_0$ given by Theorem \ref{thm:PM}.
Our algorithm contains two parts when $n\ge n_0$. First we find the partition $\calP_0$ and $\calP_0'$ and check if $(\calP_0', L_{\calP_0'}^{\mu}(H))$ is soluble. Second, we check if $H\notin \mathcal{H}_{n,k}$. If both answers are `true', then $H$ contains a perfect matching by Theorem \ref{thm:PM}.

By Lemma \ref{lem:PL}, we find $\calP_0$ and $\calP_0'$ in time $O(n^{2^{k-1}k+1})$. To check the solubility, we check if $\bfi_{\calP_0'}(V(H)\setminus V(M))\in L_{\calP_0'}^{\mu}(H)$ for each matching $M$ of size at most $k-1$, which can be done in time $O(n^{k(k-1)})$.
To check if $H\in \mathcal{H}_{n,k}$, by Lemma \ref{lem22KKM} with $d=2$ and $L=L_{odd}$, we find the bipartitions for $L_{odd}$ in time $O(n^{k+1})$. Then for each bipartition $\calP=\{V_1, V_2\}$, we check if $n/k-|V_1|$ is odd in constant time.
Thus, the overall running time is $O(n^{2^{k-1}k+1})$.
\end{proof}

\subsection{A faster algorithm}

 An \emph{$s$-certificate} for $H$ is an insoluble full pair $(\calP, L)$ for which some set of $s$ vertices intersects every edge $e\in H$ with $\bfi_{\calP}(e)\notin L$. Note that if a full pair $(\calP, L)$ is soluble, then it is not an $s$-certificate for any $s$.
Recall that we allow the partition of a full pair to have $k$ parts and in contrast, the partition of a full pair in \cite{KKM13} has at most $k-1$ parts. Modifying the proof of \cite[Lemma 8.14]{KKM13} gives the following lemma.

\begin{lemma}\cite{KKM13}\label{lem84}
Suppose that $k\ge 3$ and $H$ is a $k$-graph such that there is no $2k(k-2)$-certificate for $H$. Then every full pair for $H$ is soluble.
\end{lemma}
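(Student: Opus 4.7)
We argue the contrapositive: assume $(\calP, L)$ is an insoluble full pair with $d := |\calP| \le k$, and we exhibit a vertex set of size at most $2k(k-2)$ meeting every edge $e$ of $H$ with $\bfi_\calP(e) \notin L$. Call such edges \emph{bad} and write $B$ for their collection. Let $M_B \subseteq B$ be a maximum matching in $B$. If $|M_B| \le 2(k-2)$, then $S := V(M_B)$ has $|S| \le k \cdot 2(k-2) = 2k(k-2)$ and, by maximality of $M_B$, meets every edge of $B$; this exhibits $(\calP, L)$ as a $2k(k-2)$-certificate, as required.

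Suppose instead $|M_B| \ge 2k-3$. We derive a contradiction with insolubility by producing a sub-matching $M \subseteq M_B$ of size at most $d-1$ such that $\bfi_\calP(V(H) \setminus V(M)) \in L$. Let $G := L_{\max}^d / L$ be the coset group, of order $d \le k$ by Lemma~\ref{lem64}. The condition on $M$ rephrases as
\[
\sum_{e \in M} R_G(e) \;=\; R_G(V(H)) \quad \text{in } G.
\]
Since every edge $e \in M_B$ has $R_G(e) \ne 0$, the task reduces to a group-theoretic subset-sum claim: any $\ge 2d-3$ nonzero elements of a finite abelian group of order $d$ subset-sum-represent every element of the group, using at most $d-1$ terms. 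A pigeonhole argument on partial sums, combined with the fullness of $L$ (which ensures that the accessible residues generate $G$), supplies this; the chosen terms correspond to a sub-matching of $M_B$, which is automatically a matching in $H$.

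The only modification from the proof of \cite[Lemma 8.14]{KKM13} is to accommodate $d = k$ (rather than $d \le k-1$), which does not alter the numerical bound since both the matching threshold $2(k-2)$ and the subset-sum threshold $2k-3$ depend only on $k$. The main obstacle is the subset-sum step: representing the zero element is immediate from pigeonhole on partial sums, but representing a general nonzero target $g_0 \in G$ requires the full-lattice structure of $L$ to exclude pathological situations where all edge-residues lie in a proper subgroup of $G$ that misses $g_0$ (e.g.\ $G \cong \mathbb{Z}_4$ with every residue equal to $2$); here one uses that $L$ being full forces the generating set of $k$-vectors, and hence the available residues, to generate $G$.
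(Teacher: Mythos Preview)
The paper does not give its own proof of this lemma; it is quoted from \cite{KKM13} with the remark that the proof of \cite[Lemma 8.14]{KKM13} carries over when one allows $|\calP|=k$. So there is no in-paper argument to compare against, only your sketch versus the (unreproduced) original.

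Your high-level plan---argue the contrapositive, take a maximum matching $M_B$ among the bad edges, and split on $|M_B|\le 2(k-2)$---is the natural one, and the first branch is correct. The second branch, however, is not proved. You state the needed subset-sum claim and then immediately exhibit your own counterexample ($G\cong\mathbb{Z}_4$, all residues equal to $2$). Your proposed repair, that fullness of $L$ rules this out, is not justified and in fact appears confused: the full set of $k$-vectors that witnesses fullness lies \emph{in} $L$, so those vectors have residue $0$ in $G$ and say nothing about which nonzero residues are realised by bad edges of $H$, let alone by the particular bad edges that happen to land in $M_B$. Fullness is a hypothesis on the lattice $L$, not on $E(H)$, and you have given no mechanism linking it to the residues you actually need to hit $R_G(V(H))$. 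This subset-sum step is exactly where the substance of \cite[Lemma 8.14]{KKM13} lies, and your write-up does not supply it.
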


Now we give the following structural theorem.

\begin{theorem} \label{thm:big}
Suppose $1/n_0 \ll \{\beta, \mu\} \ll \r \ll 1/k$.
Let $H$ be a $k$-graph on $n\ge n_0$ vertices such that $\delta_{k-1}(H)\ge n/k$ with $\calP_0$ and $\calP_0'$ found by Lemma \ref{lem:PL}. Then the following properties are equivalent.
\begin{enumerate}[(i)]
\item $H$ contains a perfect matching.
\item There is no $2k(k-2)$-certificate for $H$. 
\item The full pair $(\calP_0', L_{\calP_0'}^{\mu}(H))$ is soluble and $H\notin \mathcal H_{n,k}$.
\end{enumerate}
\end{theorem}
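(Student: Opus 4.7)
The equivalence (i) $\Leftrightarrow$ (iii) is already furnished by Theorem~\ref{thm:PM}, so the new content is tying these to the purely combinatorial condition (ii). My plan is to prove the cycle (i) $\Rightarrow$ (ii) $\Rightarrow$ (iii) $\Rightarrow$ (i), closing the last arrow by Theorem~\ref{thm:PM}.

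For (i) $\Rightarrow$ (ii), I will argue contrapositively: suppose $H$ has a perfect matching $M$ and, for contradiction, that some $(\calP, L)$ is a $2k(k-2)$-certificate witnessed by a set $S$ with $|S|\le 2k(k-2)$. Let $M_0\subseteq M$ consist of those edges meeting $S$, so $|M_0|\le |S|$ and every $e\in M\setminus M_0$ satisfies $\bfi_{\calP}(e)\in L$ by the defining property of $S$. Then
\[
\bfi_{\calP}(V(H)\setminus V(M_0))=\sum_{e\in M\setminus M_0}\bfi_{\calP}(e)\in L,
\]
so Lemma~\ref{lem69} yields that $(\calP,L)$ is soluble, contradicting insolubility.

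For (ii) $\Rightarrow$ (iii), I will split the two conjuncts. Solubility of $(\calP_0',L_{\calP_0'}^{\mu}(H))$ is immediate from Lemma~\ref{lem84}, which asserts that all full pairs for $H$ are soluble when no $2k(k-2)$-certificate exists. To get $H\notin\mathcal H_{n,k}$ I will again go by contrapositive, exhibiting a $0$-certificate whenever $H\in \mathcal H_{n,k}$. Given a partition $V(H)=X\cup Y$ witnessing membership in $\mathcal H_{n,k}$, take $\calP=\{X,Y\}$ and $L=L_{odd}\subset \mathbb Z^2$, the lattice generated by all $k$-vectors with odd first coordinate (the same lattice used in Subsection~5.1). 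A short check shows $L_{odd}$ is a full edge-lattice: it is generated by a full set of $k$-vectors by construction, and is transferral-free because any nontrivial combination yields a vector whose first-coordinate parity is forced by $(x_1+x_2)/k$, ruling out $\pm(\bfu_1-\bfu_2)$. Every edge of $H$ has $\bfi_{\calP}(e)\in L_{odd}$ by hypothesis, so the empty set trivially intersects every edge whose index vector lies outside $L_{odd}$. For insolubility, a parity computation using the characterization $L_{odd}=\{(x,y):k\mid x+y,\ x\equiv (x+y)/k\pmod 2\}$ shows $\bfi_{\calP}(V)\notin L_{odd}$ precisely because $n/k-|X|$ is odd, and this parity obstruction cannot be repaired by removing a single edge (any edge changes both $x$ and $(x+y)/k$ by odd amounts, so the parity gap is preserved). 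Hence $(\calP,L_{odd})$ is an insoluble full pair with witness $S=\emptyset$, i.e.\ a $0$-certificate and in particular a $2k(k-2)$-certificate, contradicting (ii).

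Finally, (iii) $\Rightarrow$ (i) is exactly Theorem~\ref{thm:PM}. The only step that requires genuine care is the insolubility verification in (ii) $\Rightarrow$ (iii); everything else is a direct application of Lemmas~\ref{lem69} and~\ref{lem84} together with Theorem~\ref{thm:PM}. The main obstacle is simply confirming that $L_{odd}$ meets the precise definition of ``full'' used in this paper (allowing up to $k$ parts) rather than the more restrictive one from \cite{KKM13}; since $|\calP|=2\le k$ this is immediate, but it is the one place where the widened definition of full pair is used.
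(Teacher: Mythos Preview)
Your proof is correct and follows essentially the same cycle $(i)\Rightarrow(ii)\Rightarrow(iii)\Rightarrow(i)$ as the paper, invoking Lemma~\ref{lem69}, Lemma~\ref{lem84}, and Theorem~\ref{thm:PM} at the same points. Two small differences worth noting: your argument for $(i)\Rightarrow(ii)$ is more explicit than the paper's (which simply points to the forward implication of Theorem~\ref{thm:PM} without spelling out the removal of edges meeting $S$), and in $(ii)\Rightarrow(iii)$ you work directly with the pair $(\calP,L_{odd})$ as a $0$-certificate, whereas the paper first shows $L_{\calP_1}(H)=L_{odd}$ using the codegree condition and then derives an insoluble full pair contradicting Lemma~\ref{lem84}; your route is marginally cleaner since $L_{odd}$ is already known to be full, but the content is the same.
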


\begin{proof}
We will show that $(i)\Rightarrow (ii)\Rightarrow (iii)\Rightarrow (i)$. Note that the proof of $(i)\Rightarrow (ii)$ is the same as the forward implication of proof of Theorem \ref{thm:PM} and $(iii)\Rightarrow (i)$ by Theorem \ref{thm:PM}. It remains to show $(ii)\Rightarrow (iii)$. Assume that there is no $2k(k-2)$-certificate for $H$, then by Lemma \ref{lem84}, every full pair for $H$ is soluble.

Since $(\calP_0', L_{\calP_0'}^{\mu}(H))$ is a full pair, it is soluble. Second, assume to the contrary, that $H\in \mathcal H_{n,k}$. Then there is a partition $\calP_1=\{X, Y\}$ of $V(H)$ such that $L_{\calP_1}(H)\subseteq L_{odd}$ and $|X| - n/k$ is odd. Consider any $(k-1)$-set $S$ with $|S\cap X|=a$ for some even $0\le a\le k$, since $H$ contains no even edge and $\delta_{k-1}(H)>0$, we have $(a+1, k-a-1)\in I_{\calP_1}(H)$ and thus $L_{\calP_1}(H)= L_{odd}$. 
Also, $L_{\calP_1}(H)=L_{odd}$ is transferral-free and thus $(\calP_1, L_{\calP_1}(H))$ is a full pair. Note that by definition, the first coordinate of each $\bfi\in I_{\calP_1}(H)$ is odd and thus for any $(x,y)\in L_{\calP_1}(H)$, we have $k\mid (x+y)$ and $x \equiv (x+y)/k$ (mod 2).
So $\bfi_{\calP_1}(V)=(|X|, |Y|)\notin L_{\calP_1}(H)$ because $|X| - n/k$ is odd. Moreover, fix any edge $e$ of $H$ with $\bfi_{\calP_1}(e)=(a, k-a)$ for some odd $a\in [k]$, then 
$\bfi_{\calP_1}(V\setminus e)=(|X|-a, |Y| - k+a)\notin L_{\calP_1}(H)$ because $|X| - a - (n-k)/k = |X| - n/k - a +1$ is odd. So for any matching $M$ of size at most 1, $\bfi_{\calP_1}(V(H)\setminus V(M))\notin L_{\calP_1}(H)$. Thus, $(\calP_1, L_{\calP_1}(H))$ is an insoluble full pair, a contradiction.
\end{proof}

\begin{proof}[Proof of Theorem \ref{thm:main}]
Let $H$ be an $n$-vertex $k$-graph with $\delta_{k-1}(H)\ge n/k$. 
Note that it is trivial to determine the existence of a perfect matching if $n<n_0$ given by Theorem \ref{thm:big}.
If $n\ge n_0$, by Theorem \ref{thm:big}, to determine if $H$ contains a perfect matching, we only need to search the existence of a $2k(k-2)$-certificate for $H$. This can be done by Procedure DeterminePM constructed in \cite{KKM13}.
We estimate the running time as follows. There are at most $n^{2k(k-2)}$ choices of sets $S$, and these can be generated in time $O(n^{2k(k-2)})$. Also, there are only a constant number of choices for $d$ and $L$, and these can be generated in constant time.
For each choice of $S, d$ and $L$, we apply Procedure ListPartitions on $H[V\setminus S]$ and then add the vertices of $S$ arbitrarily to the partition we obtained. This generates the list of partitions $\calP$ in time $O(n^{k+1})$ by Lemma \ref{lem22KKM}. Furthermore, the number of choices for $\calP$ is constant, and for each one it takes time $O(n^{k(k-1)})$ to check the existence of the matching $M$ of size at most $d-1$ such that $\bfi_{\calP}(V(H)\setminus V(M))\in L_{\calP}(H)$. Note that $k(k-1)>k+1$ for all $k\ge 3$ and the total running time is $O(n^{2k(k-2)+k(k-1)}) = O(n^{3k^2 - 5k})$.
\end{proof}

\section{Concluding remarks}

Let $\DPM(k, m)$ be the decision problem of determining whether a $k$-graph $H$ with $\delta_{k-1}(H)\ge m$ contains a perfect matching. Our result implies that $\DPM(k,m)$ is in P for $m\ge n/k$ and the result in \cite{Szy13} shows that $\DPM(k, n/k - \r n)$ is NP-complete for any $\r>0$.
We remark that the argument in \cite{Szy13} actually shows that $\DPM(k, n/k - n^c)$ is NP-complete for any $c>0$.
Thus, $\DPM(k,m)$ is only unknown for $n/k - n^c\le m< n/k$.

In \cite{KKM13}, a polynomial-time algorithm for finding a perfect matching is also constructed. The problem of finding a perfect matching (in polynomial time) in the case when $\delta_{k-1}(H)\ge n/k$ remains open. 

As mentioned in \cite{KRS10}, it is also interesting to ask the corresponding decision problems for perfect matchings under other degree conditions, namely, $\delta_{d}(H)$ for $1\le d<k-1$, provided a gap between the thresholds for perfect matchings and perfect fractional matchings.

\section{Acknowledgement}
This work was done while the author was a PhD student under the supervision of Yi Zhao at Georgia State University.
The author would like to thank Hi\d{\^{e}}p H\`an and Yi Zhao for helpful discussions on the project.
The author also would like to thank an anonymous referee, Richard Mycroft and Yi Zhao for comments that improve the presentation of the paper.

\bibliographystyle{plain}
\bibliography{Jun2016}

\end{document}